\newtheorem {theorem}{Theorem}[section]
\newtheorem {lemma}[theorem]{Lemma}
\newtheorem {proposition}[theorem]{Proposition}
\newtheorem {corollary}[theorem]{Corollary}
\newtheorem {conjecture}[theorem]{Conjecture}
\newtheorem {definition}[theorem]{Definition}
\theoremstyle{remark}
\newtheorem {remark}[theorem]{Remark}
\newtheorem {example}[theorem]{Example}
\DeclareFontFamily{U}{mathx}{\hyphenchar\font45}
\DeclareFontShape{U}{mathx}{m}{n}{
      <5> <6> <7> <8> <9> <10>
      <10.95> <12> <14.4> <17.28> <20.74> <24.88>
      mathx10
      }{}
\DeclareSymbolFont{mathx}{U}{mathx}{m}{n}
\DeclareMathAccent{\widecheck}{0}{mathx}{"71}
\newcommand\eps{\varepsilon}
\def\polhk#1{\setbox0=\hbox{#1}{\ooalign{\hidewidth
    \lower1.5ex\hbox{`}\hidewidth\crcr\unhbox0}}}  
\def\Z {{\mathbb{Z}}}
\def\R {{\mathbb{R}}}
\def\C {{\mathbb{C}}}
\def\Q {{\mathbb{Q}}}
\def\Conf {\mathcal{C}}
\def\U {\mathcal{U}}
\def\tR{\tilde{\R}}
\def\Spin {\mathbb{S}}
\def\dirac {\slashed{\partial}}
\def\Dirac{\slashed{D}}
\def\tH{\tilde{H}}
\def\rp{\mathbb{RP}}
\def\cp{\mathbb{CP}}
\def\hp{\mathbb{HP}}
\def\F {\mathbb{F}_2}
\def\del {\partial}
\def\f {\mathbb{F}}
\def\t{\mathfrak{t}}
\def\ind{\operatorname{ind}}
\DeclareMathOperator{\im}{\operatorname{image}}
\def\Sq{\operatorname{Sq}}
\def\Inv{\operatorname{Inv}}
\def\To {\longrightarrow}
\def\su {\operatorname{SU}(2)}
\def\Ring {\mathcal{R}}
\def\mod {\operatorname{mod}}
\def\swf{\operatorname{SWF}}
\def\swfh{\mathit{SWFH}}
\def\iswfh{ ^{\infty}\! \swfh}
\def\pin {\operatorname{Pin}(2)}
\newcommand{\HM}{\mathit{HM}}
\newcommand{\HMto}{\widecheck{\mathit{HM}}}
\def\HF{\mathit{HF}}
\def\H {\mathbb{H}}
\def\iH{ ^{\infty}\! \tH}
\def\J {\mathcal{J}}
\def\csd{\mathit{CSD}}
\def\swfsmall {\text{swf}}
\def\G {\mathcal{G}}
\newcommand{\s}{\mathfrak{s}}
\newcommand{\pml}{p^\nu_\tau}
\newcommand{\vnt}{V^\nu_\tau}
\def\pt {\operatorname{pt}}
\def\irr{\operatorname{irr}}
\def\eps{\varepsilon}
\begin{document}

\title[Pin(2)-equivariant Seiberg-Witten Floer homology]{Pin(2)-equivariant Seiberg-Witten Floer homology and the Triangulation Conjecture}

\author[Ciprian Manolescu]{Ciprian Manolescu}
\thanks {The author was supported by NSF grant DMS-1104406.}
\address {Department of Mathematics, UCLA, 520 Portola Plaza\\ Los Angeles, CA 90095}
\email {cm@math.ucla.edu}

\begin{abstract}
We define $\pin$-equivariant Seiberg-Witten Floer homology for rational homology $3$-spheres equipped with a spin structure. The analogue of Fr{\o}yshov's correction term in this setting is an integer-valued invariant of homology cobordism whose mod $2$ reduction is the Rokhlin invariant. As an application, we show that there are no homology $3$-spheres $Y$ of Rokhlin invariant one such that $Y \#Y$ bounds an acyclic smooth $4$-manifold. By previous work of Galewski-Stern and Matumoto, this implies the existence of non-triangulable high-dimensional manifolds. 
\end {abstract}

\maketitle

\section{Introduction}

The existence of topological manifolds that do not admit combinatorial triangulations (that is, piecewise linear structures) has been known in dimension $\geq 5$ since the celebrated work of Kirby and Siebenmann \cite{KSbook}. Freedman \cite{Freedman} constructed such examples in dimension four. In dimensions $\leq 3$, every topological manifold has a unique piecewise-linear structure, by the older work of Rad\'o \cite{Rado} and Moise \cite{Moise}. 

A related question is whether topological manifolds admit simplicial triangulations. A simplicial triangulation is a homeomorphism to a locally finite simplicial complex; this complex does not have to be a piecewise linear manifold. A typical example of a simplicial triangulation that is not combinatorial can be obtained as follows: Take a non-trivial homology sphere $M$ (such as the Poincar\'e sphere), and form its double suspension $\Sigma^2 M$. By the double suspension theorem of Edwards \cite{Edwards, EdwardsICM} and Cannon \cite{Cannon}, $\Sigma^2 M$ is homeomorphic to a sphere. An arbitrary triangulation of $M$ induces one of $\Sigma^2 M$ that is not combinatorial, because the links of the cone points are not spheres. 

The Triangulation Conjecture (in dimension $n$) states that every $n$-dimensional topological manifold has a simplicial triangulation. The conjecture is true for $n \leq 3$, but false in dimension four: Using the properties of the Casson invariant \cite{Casson}, it can be shown that the Freedman $E_8$-manifold cannot be triangulated. 

In dimensions $\geq 5$, Galewski and Stern \cite{GS} and Matumoto \cite{Matumoto} reduced the Triangulation Conjecture to a problem in low-dimensional topology: They showed that the conjecture is true if and only if there exists a homology $3$-sphere $Y$ such that $Y$ has Rokhlin invariant one, and $Y$ is of order two in the homology cobordism group $\theta^H_3$. 

Let us recall the relevant definitions: The group $\theta^H_3$ is generated by equivalence classes of  integral homology $3$-spheres,  where $Y_0$ is equivalent to $Y_1$ if there exists a piecewise-linear (or, equivalently, a smooth) compact, oriented $4$-dimensional cobordism $W$ from $Y_0$ to $Y_1$, such that $H_*(W, Y_0; \Z) = H_*(W, Y_1; \Z)=0$. Addition in $\theta^H_3$ is given by connected sum. It is known that $\theta^H_3$ is infinite, and in fact infinitely generated \cite{FSorbifolds, FurutaHom, FSinstanton}. There is a distinguished map $\mu: \theta^H_3 \to \Z/2$, called the Rokhlin homomorphism \cite{Rokhlin, EellsKuiper}. More generally, one can associate a Rokhlin invariant $\mu(Y, \s) \in \tfrac{1}{8}\Z \pmod {2\Z}$ to any $3$-manifold $Y$ equipped with a spin structure $\s$. One takes an arbitrary compact, smooth, spin four-manifold $(W, \t)$ with boundary $(Y, \s)$ and sets
$$ \mu(Y, \s) = \frac{\sigma(W)}{8} \pmod {2\Z},$$
where $\sigma(W)$ denotes the signature of $W$. When $Y$ is an integral homology sphere, there is a unique spin structure $\s$ on $Y$, and $\sigma(W)$ is divisible by $8$; the Rokhlin homomorphism is defined by $\mu(Y) = \mu(Y, \s)$. 

The main result of this paper is:

\begin{theorem}
\label{thm:main}
To every rational homology $3$-sphere $Y$ equipped with a spin structure $\s$ we can associate an  invariant $\beta(Y, \s) \in \tfrac{1}{8}\Z$, with the following properties:
\begin{enumerate}
\item If $-Y$ denotes $Y$ with the orientation reversed, then $\beta(-Y, \s) = - \beta(Y, \s)$;
\item The mod $2$ reduction of $-\beta(Y, \s)$ is the generalized Rokhlin invariant $\mu(Y, \s)$;
\item Suppose that $W$ is a smooth, oriented, negative-definite cobordism from $Y_0$ to $Y_1$, and let $b_2(W)$ denote the second Betti number of $W$. If $W$ admits a spin structure $\t$, then 
$$\beta(Y_1, \t|_{Y_1}) \geq \beta(Y_0, \t|_{Y_0}) + \frac{1}{8} b_2(W).$$
\end{enumerate}
\end{theorem}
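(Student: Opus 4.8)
The plan is to construct $\beta$ from a $\pin$-equivariant refinement of the Seiberg-Witten Floer stable homotopy type, reading off properties (1)--(3) from duality, from the module structure of its Borel homology, and from cobordism functoriality. First I would set up finite-dimensional approximation for the Seiberg-Witten equations on $(Y, \s)$. Because $\s$ is an honest spin structure rather than merely a $\mathrm{spin}^c$ structure, the spinor bundle is quaternionic, and the element $j \in \pin \subset \mathbb{H}^\times$ acts on the Coulomb gauge slice compatibly with the Seiberg-Witten vector field and with the usual $S^1$-action, so the symmetry group is the full $\pin = S^1 \cup j\cdot S^1$. Running the Conley index construction for the approximated flow on large balls in finite-dimensional $\pin$-invariant subspaces, exactly as in the $S^1$-equivariant case, produces a $\pin$-equivariant stable homotopy type $\swf(Y, \s)$, well-defined up to suspension by finite-dimensional $\pin$-representations (copies of the sign representation $\tR$ coming from $1$-forms and of the quaternionic representation $\mathbb{H}$ coming from spinors); the resulting ambiguity is recorded by a rational grading shift $n(Y, \s)$ built from $\eta$-invariants (of the Dirac and signature operators) of $(Y, \s)$.

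Next I would pass to $\pin$-equivariant Borel homology, which is a module over $\Ring = H^*(B\pin; \F) = \F[q, v]/(q^3)$, with $\deg q = 1$ and $\deg v = 4$. The structural input, inherited from the non-equivariant theory, is that away from the reducible solution this module coincides with the equivariant homology of a point: it is eventually $v$-periodic, and in particular the ``$q$-tower'' $\{v^n q : n \gg 0\}$ persists. I would then define $\beta(Y, \s)$ --- suitably normalized to lie in $\tfrac18 \Z$ --- as the grading, shifted by $n(Y, \s)$, at which an element of this $q$-tower is first hit from the full Floer homology. With this definition, property (2) is essentially immediate: the relevant fractional part of $n(Y, \s)$ is, under the standard normalization, exactly $-\mu(Y, \s) \in \tfrac18 \Z / 2\Z$, since $n(Y, \s)$ is governed by the Dirac index, whose mod-$2$ behaviour is the content of the Rokhlin formula $\mu = \sigma(W)/8$.

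For property (1) I would invoke $\pin$-equivariant Spanier-Whitehead duality: $\swf(-Y, \s)$ is the dual of $\swf(Y, \s)$ up to a representation shift, and since $\s$ is self-conjugate the duality respects the $j$-symmetry. Duality exchanges the roles of low and high gradings and carries the $q$-tower to the dual $q$-cotower, so the minimal-degree quantity defining $\beta$ gets negated; what remains is to check that the grading shift in the duality isomorphism and the shift $n(Y, \s)$ combine so that $\beta(-Y, \s) = -\beta(Y, \s)$ exactly, with no residual term --- which is at least consistent with $\mu(-Y, \s) = -\mu(Y, \s)$.

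Finally, for property (3), a smooth negative-definite spin cobordism $(W, \t)$ from $Y_0$ to $Y_1$ gives, via finite-dimensional approximation of the Seiberg-Witten equations on $W$ with a cylindrical end (a relative Bauer-Furuta construction), a $\pin$-equivariant stable map between suitable suspensions of $\swf(Y_0, \t|_{Y_0})$ and $\swf(Y_1, \t|_{Y_1})$. Negative-definiteness forces $b_2^+(W) = 0$, so no trivial $S^1$-representation summands are lost on the domain side; the only representation-theoretic cost is $b_2(W)$, coming from the negative part of $H^2(W)$, together with the index of the Dirac operator on $W$, which after the same normalizations contributes precisely the shift by $\tfrac18 b_2(W)$. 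Passing to Borel homology and tracking the induced map on the $q$-tower then yields $\beta(Y_1, \t|_{Y_1}) \geq \beta(Y_0, \t|_{Y_0}) + \tfrac18 b_2(W)$. I expect the main obstacle to lie here: one must verify that the cobordism map is genuinely $\pin$-equivariant (compatibility of $j$ with the reducible locus over $W$), that it carries exactly the claimed representation shift, and --- most delicately --- that it is a map of $\Ring$-modules in a way that actually constrains the $q$-tower rather than only the $1$- or $q^2$-towers. A parallel difficulty, running through the whole argument, is bookkeeping: keeping the $\tfrac18$-normalization of the rational grading consistent across $n(Y, \s)$, the duality shift, and the cobordism shift, so that the numerical constants in (1)--(3) come out exactly as stated.
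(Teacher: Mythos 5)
Your overall architecture coincides with the paper's: finite-dimensional approximation, a $\pin$-equivariant Conley index, Borel homology as a module over $\Ring=\F[q,v]/(q^3)$, $\beta$ read off from the $q$-tower, and for property (3) the relative Bauer--Furuta map with $b_2^+(W)=0$ together with the Dirac-index bookkeeping that produces $\tfrac18 b_2(W)$ (this is exactly Proposition~\ref{prop:abcIneq} and Proposition~\ref{prop:cob}). The genuine gap is in property (1), at precisely the sentence you pass over: the claim that equivariant Spanier--Whitehead duality carries the $q$-tower to the dual $q$-tower. In the paper this is Proposition~\ref{prop:abcDuality}, and it is a real computation, not normalization-checking: one relates the Borel homology of the Conley index for the reversed flow to the Borel cohomology of the original via co-Borel homology and the co-Borel/Tate/Borel exact sequence \eqref{eq:tate}, uses the computation \eqref{eq:tateS} that the Tate homology of $S^0$ is $\F[q,v,v^{-1}]/(q^3)$ shifted by $2$ (reversing the grading of this ring equals shifting it by $2$), and chases degrees. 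The outcome is that duality permutes the three towers nontrivially, $a(X')=\dim V-c(X)$, $b(X')=\dim V-b(X)$, $c(X')=\dim V-a(X)$: only the middle tower is self-dual, while the outer two are exchanged (which is why $\alpha$ and $\gamma$ are \emph{not} anti-symmetric). Your one-tower setup asserts the self-dual case without argument, and because you never introduce the full triple with its constraints ($a\geq b\geq c$, all congruent to the level $s$ mod $4$), you lack the bookkeeping the paper uses to pin down this permutation. Without that step you cannot rule out the (false) alternative in which the tower defining your invariant is exchanged with another one under orientation reversal.

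A secondary gap is in property (2), which you call essentially immediate from the fractional part of $n(Y,\s)$. In fact one needs two further inputs: first, that the Conley index is a space of type $\swf$ whose level is congruent to $\dim V^0_\tau$ modulo $4$ (Lemma~\ref{lem:nt}, proved by perturbing so the reducible is nondegenerate and using the attractor--repeller sequences), so that $b(I^\nu_\tau)-\dim V^0_\tau\in 4\Z$ and hence $\beta(Y,\s)+n(Y,\s,g)\in 2\Z$; second, that $\ind_{\C}(\Dirac)$ is even because the four-dimensional Dirac operator is quaternionic. It is this mod-$4$ periodicity of the $\pin$-theory, not any property of $n$ alone, that makes the Rokhlin reduction work---exactly the feature that fails for the $S^1$-equivariant correction term $\delta$. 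By contrast, your worry in property (3) that the cobordism map might only constrain the wrong towers is resolved by the paper's localization argument: since the $S^1$-fixed-point map is an equivalence, $f_*$ is an isomorphism on Borel homology in all sufficiently high degrees and commutes with $v$, so it surjects onto every tower simultaneously, and the stated inequalities follow for $\alpha$, $\beta$, and $\gamma$ at once.
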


When $Y$ is an integral homology sphere, we simply write $\beta(Y)=\beta(Y, \s) \in \Z$ for the unique spin structure $\s$. We then have $\beta(Y) \equiv \mu(Y) \pmod 2$. The third property of $\beta$ mentioned in Theorem~\ref{thm:main} shows that $\beta$ is an invariant of homology cobordism. Together with the other two properties, this implies  the following:

\begin{corollary}
\label{cor:no2}
If $Y$ is a homology sphere of Rokhlin invariant one, then $Y \# Y$ is not homology cobordant to $S^3$.
\end{corollary}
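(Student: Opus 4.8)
The plan is to derive Corollary~\ref{cor:no2} formally from Theorem~\ref{thm:main} together with the group structure on $\theta^H_3$. The key observation is that ``$Y\#Y$ is homology cobordant to $S^3$'' means precisely that $2[Y]=0$ in $\theta^H_3$, and since orientation reversal is the inverse operation in $\theta^H_3$, this is equivalent to $Y$ being homology cobordant to $-Y$. So it suffices to show that $\beta(Y)\neq\beta(-Y)$ whenever $Y$ is an integral homology sphere with $\mu(Y)=1$.

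First I would check that $\beta$ is a homology cobordism invariant, as asserted in the paragraph following Theorem~\ref{thm:main}: if $W$ is a homology cobordism from $Y_0$ to $Y_1$, then $b_2(W)=0$ and $W$ is (vacuously) negative definite; moreover $w_2(W)\in H^2(W;\Z/2)=0$, so $W$ admits a unique spin structure $\t$, which restricts to the unique spin structures on $Y_0$ and $Y_1$. Property (3) of Theorem~\ref{thm:main} then gives $\beta(Y_1)\geq\beta(Y_0)$, and applying the same reasoning to the reversed cobordism from $Y_1$ to $Y_0$ gives $\beta(Y_0)\geq\beta(Y_1)$; hence $\beta(Y_0)=\beta(Y_1)$.

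Now suppose, for contradiction, that $Y$ is a homology sphere with $\mu(Y)=1$ and that $Y\#Y$ is homology cobordant to $S^3$. Then, as noted above, $Y$ is homology cobordant to $-Y$, so by the previous paragraph $\beta(Y)=\beta(-Y)$; combining with property (1) gives $\beta(Y)=-\beta(Y)$, whence $\beta(Y)=0$. But property (2) says that the mod~$2$ reduction of $-\beta(Y)$ equals $\mu(Y)=1$, so $\beta(Y)$ is an odd integer --- a contradiction. (As a sanity check, the same argument applied to $Y=S^3$ recovers $\beta(S^3)=0$, consistent with $\mu(S^3)=0$.)

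The only point that requires genuine care --- rather than pure formal manipulation --- is the verification in the second paragraph that property (3) really does apply to a homology cobordism and to its orientation reversal, i.e.\ that such a cobordism is negative definite with vanishing $b_2$ and carries a spin structure restricting correctly to both ends. Everything else is bookkeeping with the abelian group $\theta^H_3$ and the sign and parity constraints on $\beta$ supplied by parts (1) and (2) of Theorem~\ref{thm:main}; in particular, additivity of $\beta$ under connected sum is not needed.
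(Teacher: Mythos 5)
Your argument is correct and is essentially the paper's own proof: translate ``$Y\#Y$ homology cobordant to $S^3$'' into ``$Y$ homology cobordant to $-Y$'', use property (3) (applied to the cobordism and its reverse, which is exactly how the paper obtains homology cobordism invariance of $\beta$) to get $\beta(Y)=\beta(-Y)$, and then combine properties (1) and (2) to force $\beta(Y)=0$ and $\mu(Y)=0$. Your explicit check that a homology cobordism is vacuously negative definite and carries a spin structure restricting correctly to both ends is just a spelled-out version of a step the paper leaves implicit.
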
 

In view of the work of Galewski-Stern and Matumoto \cite{GS, Matumoto}, this disproves the Triangulation Conjecture in high dimensions:

\begin{corollary}
For every $n \geq 5$, there exists a closed $n$-dimensional topological manifold that does not admit a  simplicial triangulation.
\end{corollary}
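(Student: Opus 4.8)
The plan is to deduce this directly from Corollary~\ref{cor:no2} together with the work of Galewski--Stern \cite{GS} and Matumoto \cite{Matumoto}. Recall the shape of their reduction: for a fixed $n \geq 5$, every closed $n$-dimensional topological manifold admits a simplicial triangulation if and only if the short exact sequence
$$ 0 \longrightarrow \ker\mu \longrightarrow \theta^H_3 \stackrel{\mu}{\longrightarrow} \Z/2 \longrightarrow 0 $$
splits; equivalently, if and only if there exists an integral homology $3$-sphere $Y$ with Rokhlin invariant $\mu(Y)=1$ whose class in $\theta^H_3$ has order two. (The mechanism is that one attaches to any topological manifold $M^n$ a cohomological obstruction in $H^5(M;\ker\mu)$, namely the image of the Kirby--Siebenmann class in $H^4(M;\Z/2)$ under the Bockstein of the extension above; this obstruction vanishes identically exactly when the extension is split, and when it does not, one can build a closed $n$-manifold on which it is nonzero.)

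So the first step is to check that no such $Y$ exists. If $Y$ is a homology $3$-sphere with $\mu(Y)=1$, then $[Y]\neq 0$ in $\theta^H_3$, since $\mu$ is a homomorphism to $\Z/2$ taking the value $1$ on $[Y]$; and Corollary~\ref{cor:no2} says that $Y\#Y$ is not homology cobordant to $S^3$, i.e.\ $2[Y]\neq 0$ in $\theta^H_3$. Hence the order of $[Y]$ is neither $1$ nor $2$, so $Y$ cannot represent an element of order two. Therefore the displayed sequence does not split, and for every $n\geq 5$ the Galewski--Stern--Matumoto criterion yields a closed $n$-dimensional topological manifold with no simplicial triangulation.

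The argument is formal once Corollary~\ref{cor:no2} is in hand; the only point requiring care is the interface with \cite{GS, Matumoto}, namely confirming that their hypothesis ``$Y$ of order two in $\theta^H_3$'' genuinely asks for $2[Y]=0$, so that the single relation $2[Y]\neq 0$ of Corollary~\ref{cor:no2} is precisely what obstructs the splitting. I do not anticipate any further difficulty.
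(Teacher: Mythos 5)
Your argument is correct and follows essentially the same route as the paper: both deduce the result from Corollary~\ref{cor:no2} via the Galewski--Stern--Matumoto reduction, which says the Triangulation Conjecture in each dimension $n \geq 5$ holds if and only if some homology $3$-sphere of Rokhlin invariant one has order two in $\theta^H_3$. The paper is merely more explicit about how the non-triangulable manifolds arise (citing the obstruction $\Sq^1\Delta(M) \in H^5(M;\Z/2)$ from Galewski--Stern, a specific $5$-manifold realizing it, and products with tori for $n>5$), whereas you invoke the same criterion in its ``if and only if'' form.
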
 

Indeed, Galewski and Stern proved in \cite[Theorem 2.1]{GS5} that (assuming the truth of Corollary~\ref{cor:no2}) an obstruction to the existence of simplicial triangulations on manifolds $M$ of dimension $\geq 5$ is the non-vanishing of $\Sq^1 \Delta(M) \in H^5(M; \Z/2)$, where $\Delta(M) \in H^4(M; \Z/2)$ is the Kirby-Siebenmann obstruction to combinatorial triangulations, and $\Sq^1$ denotes the first Steenrod square. It follows from the work of Galewski-Stern, Matumoto, and earlier work of Siebenmann \cite{SiebenmannQ} that all orientable $5$-manifolds are triangulable. A specific non-orientable five-dimensional manifold $M^5$ with $\Sq^1\Delta(M) \neq 0$ is constructed in \cite{GS5}. Hence $M^5$ is non-triangulable. To get non-orientable examples of non-triangulable manifolds in dimensions $n > 5$ we can take the product of $M^5$ with the torus $T^{n-5}$. To get an orientable example in dimension $6$ we can consider the non-orientable $S^1$-bundle over $M$ given by $\tilde M \times_{\Z/2} S^1$, where $\tilde M \to M$ is the oriented double cover; the total space of this bundle is orientable. To get orientable examples in dimensions $n > 6$, we can then take products with $T^{n-6}$.

Let us discuss the origin of the invariant $\beta$ from Theorem~\ref{thm:main}. There are two important antecedents. The first is Casson's invariant of integral homology spheres \cite{Casson}. Casson's invariant $\lambda$ satisfies the analogues of properties (1) and (2) in Theorem~\ref{thm:main} (anti-symmetry under orientation reversal, and being a lift of the Rokhlin invariant), but is not an invariant of homology cobordism. Nevertheless, this sufficed to make some progress in the direction of Corollary~\ref{cor:no2}:  A particular class of homology spheres $Y$ such that $Y \# Y$ is homology cobordant to $S^3$ is given by those $Y$ that admit an orientation reversing homeomorphism; using $\lambda$, one can show that homology spheres of this kind have Rokhlin invariant zero.

The second antecedent of $\beta$ consists of the ``correction terms'' in Floer homology inspired by the work of Fr{\o}yshov \cite{Froyshov}. Correction terms were defined first by Fr{\o}yshov in instanton (Yang-Mills) Floer homology \cite{FroyshovYM}, then by Ozsv\'ath-Szab\'o in Heegaard Floer homology \cite{AbsGraded}, and by Fr{\o}yshov and Kronheimer-Mrowka in monopole (Seiberg-Witten) Floer homology \cite{FroyshovHM, KMOS, KMbook}. In all these cases, one studies a version of Floer homology for $Y$, and captures a numerical invariant from its grading. 

For example, if $Y$ is a homology $3$-sphere, its monopole Floer homology $\HMto(Y)$ (as defined in \cite{KMbook}) is a graded module over the polynomial ring $\Z[U]$, with $U$ lowering degree by $2$. The module $\HMto(Y)$ consists of some $\Z[U]$-torsion part and a single ``infinite tail'' of the form:
\[ \xymatrixcolsep{.7pc}
\xymatrix{
 \Z  &  0 & \Z \ar@/_1pc/[ll]_{U} & 0 & \Z \ar@/_1pc/[ll]_{U} & 0 & \dots \ar@/_1pc/[ll]_{U} 
} \]
where the $U$-action is indicated by arrows. The Fr{\o}yshov invariant $h(Y)$ is defined as minus one-half of the minimal grading of an element in this tail. 

The correction terms mentioned above all satisfy analogues of the properties (1) and (3) in Theorem~\ref{thm:main} (anti-symmetry under orientation reversal, and a strong form of monotonicity under negative-definite cobordisms). However, none of them reduces to the Rokhlin invariant mod $2$.

The invariant $\beta$ combines the good properties of the Casson and Fr{\o}yshov-type invariants. It is defined as a correction term in a new, $\pin$-equivariant version of Seiberg-Witten Floer homology. This version uses an extra symmetry of the Seiberg-Witten equations that appears in the presence of a spin structure. The same symmetry was previously used with success in four dimensions, most notably in Furuta's proof of the $10/8$-Theorem \cite{Furuta}. 

In three dimensions, we use the extra symmetry to define $\pin$-equivariant Seiberg-Witten Floer homology with coefficients in the field $\F$ of two elements. Given a rational homology sphere $Y$ with a spin structure $\s$, its $\pin$-equivariant Seiberg-Witten Floer homology is a graded module over the ring  $\F[q,v]/(q^3),$ with $q$ and $v$ lowering degrees by $1$ and $4$, respectively. This Floer homology has an infinite tail of the form:
\[ \xymatrixcolsep{.7pc}
\xymatrix{
\dots & \F  &  \F \ar@/_1pc/[l]_{q} &  \F \ar@/_1pc/[l]_{q} & 0 & \F \ar@/^1pc/[llll]^{v} & \F \ar@/_1pc/[l]_{q} \ar@/^1pc/[llll]^{v} & \F \ar@/_1pc/[l]_{q} \ar@/^1pc/[llll]^{v} & 0 & \dots  \ar@/^1pc/[llll]^{v} & \dots \ar@/^1pc/[llll]^{v} & \dots \ar@/^1pc/[llll]^{v}
} \]
If we forget the action of $q$, the tail consists of three direct summands supported in three different degrees mod $4$. One defines three invariants $\alpha(Y, \s), \beta(Y, \s), \gamma(Y, \s)$ in terms of the the minimal possible gradings of elements in each of the three summands. The middle invariant $\beta$ is the one used in Theorem~\ref{thm:main}. The invariants $\alpha$ and $\gamma$ satisfy the exact analogues of properties (2) and (3) in Theorem~\ref{thm:main}, but they are less useful than $\beta$ because they get switched under orientation reversal: 
$$ \alpha(-Y, \s) = - \gamma(Y, \s), \ \ \ \gamma(-Y, \s) = - \alpha(Y, \s).$$ 
A key fact to be noted is that the tail in $\pin$-equivariant Seiberg-Witten Floer homology is periodic only mod $4$ (not mod $2$). This allows us to get a hold on the mod $2$ reductions of $\alpha$, $\beta$, and $\gamma$: The reductions all end up being equal to the Rokhlin invariant.

The construction of $\pin$-equivariant Seiberg-Witten Floer homology in this paper uses the techniques previously employed by the author in \cite{Spectrum}. It involves doing finite-dimensional approximations of the Seiberg-Witten equations, using Conley index theory to construct a $\pin$-equivariant space, and then taking the homology of this space. In \cite{Spectrum}, this was done in an $S^1$-equivariant context. Adapting the construction to the $\pin$-equivariant setting presents no major difficulties. 

We mention that we chose the methods in \cite{Spectrum} because (for rational homology $3$-spheres) they seemed easiest from a technical point of view. However, we expect that $\pin$-equivariant Seiberg-Witten Floer homology can also be defined in the spirit of the work of Kronheimer and Mrowka \cite{KMbook}. Furthermore, there should be a $\pin$-version of Heegaard Floer homology, where the role of the extra symmetry is played by the interchange of the alpha and beta curves. The advantage of using these theories (rather than the Conley index method) is that they should make it possible to define $\pin$-Floer homology for all spin $3$-manifolds, and to make it more computable.

\medskip
\noindent {\bf Acknowledgements.} I would like to thank Ron Fintushel, Mikio Furuta, Jin-Hong Kim, Peter Kronheimer, Timothy Nguyen, Nikolai Saveliev, Ron Stern, Raphael Zentner and the referees for helpful comments on  previous versions of this paper. 

\section{Spaces of type SWF}
\label{sec:spaces}

In this section we discuss some facts regarding the algebraic topology of spaces with a $\pin$-action. We show that under certain conditions, one can extract from their Borel homology groups three quantities $a, b, c \in \Z$. Later, in Section~\ref{sec:swf}, we will use this information in the context of Floer theory to obtain the three new invariants of homology cobordism.

\subsection{Pin(2)-equivariant topology}
 Let $\H = \C \oplus \C j= \{x + yi + zj + wk \mid x,y,z,w \in \R \}$ be the space of quaternions. Inside the group of unit quaternions $S(\H) = \su$ we have the circle group $S^1 = \C \cap S(\H)$, and also the subgroup
$$ G: = \pin = S^1 \cup S^1j.$$
There is a short exact sequence
\begin{equation}
\label{eq:pin}
 1 \To S^1 \To G \To \Z/2 \To 1.
 \end{equation}
 Furthermore, the inclusion $G\subset \su$ can be viewed as part of a fibration
 \begin{equation}
\label{eq:pinf}
  G \To \su \To \rp^2,
\end{equation}
where the second map is the composition of the Hopf fibration with the involution on $S^2$.

Among the real irreducible representations of $G$, we mention the following three:
\begin{itemize}
\item the trivial representation $\R$;
\item the one-dimensional sign representation $\tR$ on which $S^1 \subset G$ acts trivially and $j$ acts by multiplication by $-1$;
\item the quaternions $\H$, acted on by $G$ via left multiplication.
\end{itemize}

We want to study the topology of spaces with a $G$-action. Let us start by understanding the classifying space $BG = EG/G$. The short exact sequence \eqref{eq:pin} shows that $BG$ is the quotient of $BS^1 = \cp^{\infty}$ under the involution
$$ [z_1 : w_1 : z_2 : w_2 : \dots ] \ \to \ [-\bar{w}_1 : \bar{z}_1 : -\bar{w}_2 : \bar{z}_2 : \dots ].$$

An alternate (and more useful) description of $BG$ comes from \eqref{eq:pinf}, which gives a fibration
 \begin{equation}
\label{eq:fib2}
 \rp^2 \To BG \To B\su = \hp^{\infty}.
 \end{equation}

We are interested in the cohomology of $BG$ with coefficients in the field $\F = \Z/2$. The Leray-Serre spectral sequence associated to \eqref{eq:fib2} has no room for higher differentials, so the cohomology groups of $BG$ are
\[ \xymatrixcolsep{.7pc}
\xymatrix{
 \F  &  \F  &  \F  & 0 & \F  & \F  & \F  & 0 & \dots 
} \]
in degrees $0, 1, 2, \dots$ Moreover, the multiplicative properties of the spectral sequence show that, as a ring, 
\begin{equation}
\label{eq:bg}
\Ring := H^*(BG; \F) \cong \F[q, v]/ (q^3),
 \end{equation}
with elements $q$ in degree $1$ and $v$ in degree $4$. 

Let $X$ be a pointed, finite $G$-CW complex. Consider its (reduced) Borel homology and cohomology
$$ \tH_*^{G} (X; \F) = \tH_*(EG_+ \wedge_{G} X; \F),$$
$$ \tH^*_{G} (X; \F) = \tH^*(EG_+ \wedge_{G} X; \F),$$
where $EG_+$ denotes the union of $EG$ with a disjoint basepoint.

Both Borel homology and Borel cohomology are modules over $\tH^*_G(S^0; \F) = H^*(BG; \F) = \Ring$. Note that since we work with coefficients over a field, the Borel homology and Borel cohomology of $X$ (in any given grading) are dual vector spaces, and their $\Ring$-module structures are also related to each other by duality. For example, the description \eqref{eq:bg} of $\tH^*_G(S^0; \F)$ implies that the Borel homology of $S^0$ is:
\[ \xymatrixcolsep{.7pc}
\xymatrix{
 \F  &  \F \ar@/_1pc/[l]_{q} &  \F \ar@/_1pc/[l]_{q} & 0 & \F \ar@/^1pc/[llll]^{v} & \F \ar@/_1pc/[l]_{q} \ar@/^1pc/[llll]^{v} & \F \ar@/_1pc/[l]_{q} \ar@/^1pc/[llll]^{v} & 0 & \dots  \ar@/^1pc/[llll]^{v} & \dots \ar@/^1pc/[llll]^{v} & \dots \ar@/^1pc/[llll]^{v}
} \]
in degrees $0, 1, 2, \dots$, with the module structure indicated through the arrows.

One property of Borel cohomology that we need is a version of the localization theorem; see \cite[III (3.8)]{tomDieck} for a proof:

\begin{proposition}
\label{prop:loc}
 Suppose $A \subseteq X$ is a $G$-subcomplex such that the action of $G$ on $X - A$ is free. Then the inclusion of $A$ into $X$ induces an isomorphism on equivariant cohomology after inverting the element $v \in H^*(BG; \F)$; that is, we have an isomorphism of $\F[q,v,v^{-1}]/(q^3)$-modules: 
\begin{equation}
\label{eq:loc}
v^{-1} \tH^*_{G} (A; \F) \cong v^{-1} \tH^*_{G} (X; \F).
\end{equation}
\end{proposition}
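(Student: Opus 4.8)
The plan is to reduce the statement to a vanishing result for a relative group and then exploit the freeness hypothesis. First I would bring in the cofiber sequence of pointed $G$-CW complexes $A \hookrightarrow X \To X/A$. Applying the Borel construction $EG_+ \wedge_G (-)$ — which carries cofiber sequences of pointed $G$-CW complexes to cofiber sequences of pointed spaces, since $EG_+ \wedge (-)$ is a smash and the action on $EG_+ \wedge X$ is free away from the basepoint — and then reduced $\F$-cohomology, one obtains a long exact sequence of $\Ring$-modules relating $\tH^*_G(A;\F)$, $\tH^*_G(X;\F)$, and $\tH^*_G(X/A;\F)$, with the restriction map $\tH^*_G(X;\F)\to\tH^*_G(A;\F)$ sitting between the two terms $\tH^*_G(X/A;\F)$ and $\tH^{*+1}_G(X/A;\F)$. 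Localizing a module at the multiplicative set $\{1,v,v^2,\dots\}$ is an exact (flat) operation, so inverting $v$ yields a long exact sequence of $\F[q,v,v^{-1}]/(q^3)$-modules of the same shape. Hence it suffices to prove that $v^{-1}\tH^*_G(X/A;\F)=0$.

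For this I would use that $G$ acts freely on $X - A$. The map $EG_+ \wedge_G (X/A) \to (X/A)/G$ induced by the collapse $EG_+ \To S^0$ should be a homotopy equivalence: over the open free stratum $(X-A)/G \subset (X/A)/G$ it is a fiber bundle with contractible fiber $EG$, while the preimage of the basepoint is a single point. The orbit space $(X/A)/G \cong (X/G)/(A/G)$ is a finite CW complex, being the quotient of the finite $G$-CW complex $X/A$, so it is finite-dimensional. Therefore $\tH^*_G(X/A;\F)\cong \tH^*\!\big((X/A)/G;\F\big)$ is concentrated in finitely many degrees, and for any homogeneous class $\xi$ of degree $d$ one has $v^N\xi=0$ as soon as $d+4N$ exceeds the top nonzero degree. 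Consequently every element of $v^{-1}\tH^*_G(X/A;\F)$ is zero, which is what we needed.

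To finish, I would note that the map producing \eqref{eq:loc} is precisely the localization of the restriction homomorphism $\tH^*_G(X;\F)\to\tH^*_G(A;\F)$, which is $\Ring$-linear by naturality of the Borel construction; hence \eqref{eq:loc} is an isomorphism of $\F[q,v,v^{-1}]/(q^3)$-modules, as asserted. (Alternatively, one may simply cite the general localization theorem \cite[III (3.8)]{tomDieck}, with $v$ playing the role of the Euler class whose powers detect the free part of the action.)

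The main obstacle is the homotopy-equivalence step of the second paragraph: one has to check carefully that the Borel construction of a $G$-CW complex whose action is free away from the basepoint is homotopy equivalent to its orbit space, being attentive to the basepoint conventions (reduced versus unreduced Borel constructions) and to the fact that $G=\pin$, though a compact Lie group, is disconnected. Once that point is secured, the remainder is formal: exactness of localization, naturality, and the degree estimate coming from finite-dimensionality of the orbit space.
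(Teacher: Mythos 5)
Your argument is correct, but it is worth noting that the paper does not actually prove Proposition~\ref{prop:loc} at all: it simply invokes the general localization theorem of tom Dieck \cite[III (3.8)]{tomDieck}, which is the alternative you mention in passing. What you supply instead is a self-contained, standard proof tailored to this situation: the cofiber sequence of the $G$-CW pair $(X,A)$ gives a long exact sequence of $\Ring$-modules, localization at $\{1,v,v^2,\dots\}$ is exact, and so everything reduces to showing $v^{-1}\tH^*_G(X/A;\F)=0$; this follows because $X/A$ is a finite $G$-CW complex with free action away from the basepoint, so its reduced Borel cohomology is identified with $\tH^*((X/A)/G;\F)$ (a cell-by-cell comparison of $EG\times_G(-)$ with the orbit space handles the point you flag, and disconnectedness of $G$ causes no trouble since the comparison only uses contractibility of $EG$ and freeness of the action), and this is bounded above in degree, forcing the degree-$4$ class $v$ to act nilpotently. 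This buys you transparency — in particular it makes clear that the only property of $v$ used is that it has positive degree, and it avoids unpacking tom Dieck's formulation in terms of families of isotropy groups and multiplicative sets of Euler classes — at the cost of redoing an argument that is essentially the standard proof of the cited theorem. Either route establishes \eqref{eq:loc} as an isomorphism of $\F[q,v,v^{-1}]/(q^3)$-modules, since the map being localized is the $\Ring$-linear restriction map.
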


Another important property of Borel cohomology (with $\F$ coefficients) is its invariance under suspensions, up to a shift in degree. Precisely, if $V$ is a finite-dimensional representation of $G$, let us denote by $V^+$ the one-point compactification of $V$, and by $\Sigma^V X = V^+ \wedge X$ the suspension of $X$ by this representation. We have: 

\begin{proposition}
\label{prop:susp}
 For any finite-dimensional representation $V$ of $G$, we have an isomorphism of $\Ring$-modules:
\begin{equation}
\label{eq:susp}
\tH^*_{G} (\Sigma^V X; \F) \cong \tH^{*- \dim V}_{G} (X; \F).
\end{equation} 
A similar isomorphism holds for Borel homology.
\end{proposition}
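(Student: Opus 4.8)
The plan is to recognize Proposition~\ref{prop:susp} as an instance of the Thom isomorphism theorem applied to the Borel construction, the key point being that with coefficients in $\F = \Z/2$ every real vector bundle is orientable, so no hypothesis on the representation $V$ is needed. Concretely, I would first reinterpret the suspension $\Sigma^V X$ bundle-theoretically: the product $V \times X \to X$, with $G$ acting diagonally, is a $G$-equivariant (topologically trivial but equivariantly twisted) vector bundle of rank $\dim V$, and its fibrewise one-point compactification, with the section at infinity and the fibre over the basepoint $x_0 \in X$ both collapsed, is canonically $\Sigma^V X$. Applying $EG_+ \wedge_G (-)$ replaces $V \times X \to X$ by the honest rank-$(\dim V)$ real vector bundle
$$ \xi_V \;:=\; EG \times_G (V \times X) \;\longrightarrow\; EG \times_G X \;=:\; B, $$
and a direct inspection of the quotients identifies $EG_+ \wedge_G \Sigma^V X$ with $\operatorname{Th}(\xi_V)/\operatorname{Th}(\xi_V|_{A})$, where $A = EG \times_G \{x_0\} = BG \subseteq B$ and $\operatorname{Th}$ denotes the Thom space. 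Thus $\tH^*_G(\Sigma^V X; \F) \cong \tilde H^*\bigl(\operatorname{Th}(\xi_V)/\operatorname{Th}(\xi_V|_A); \F\bigr)$, while $\tH^*_G(X;\F) \cong H^*(B, A; \F)$.

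Next I would invoke the relative Thom isomorphism over $\F$: for any rank-$n$ real vector bundle $\xi \to B$ and any subspace $A \subseteq B$, cup product with the mod $2$ Thom class $u_\xi \in H^n(\operatorname{Th}\xi; \F)$ induces an isomorphism $H^{*-n}(B, A; \F) \xrightarrow{\ \cong\ } \tilde H^*\bigl(\operatorname{Th}(\xi)/\operatorname{Th}(\xi|_A); \F\bigr)$ (this follows from the absolute Thom isomorphism and the five lemma applied to the long exact sequences of the relevant pairs). Taking $\xi = \xi_V$, $n = \dim V$, $A = BG$, the two sides become $\tH^{*-\dim V}_G(X;\F)$ and $\tH^*_G(\Sigma^V X;\F)$, which is the asserted isomorphism. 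It is $\Ring$-linear because the Thom isomorphism $x \mapsto p^*x \cup u_\xi$ is $H^*(B;\F)$-linear (via $p^* \colon H^*(B) \to \tilde H^*(\operatorname{Th}\xi)$), and on both sides the $\Ring$-module structure is the restriction of the $H^*(B;\F)$-module structure along $B = EG \times_G X \to BG$. For Borel homology one either dualizes — the coefficients being a field, $\tH_*^{G}$ and $\tH^*_G$ are degreewise $\F$-dual with dual $\Ring$-module structures, as noted in the text — or applies the homology Thom isomorphism (cap product with $u_{\xi_V}$) in exactly the same way.

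The mathematics here is entirely classical, so the only step needing genuine care is the identification in the first paragraph: keeping the basepoint of $X$ and the distinction between reduced and unreduced Borel constructions straight while matching $EG_+ \wedge_G \Sigma^V X$ with the relative Thom space of $\xi_V$. A more hands-on route that sidesteps this bookkeeping is to use $\Sigma^{V \oplus W} X = \Sigma^V \Sigma^W X$ to reduce, by induction on $\dim V$, to the three irreducible representations $\R$, $\tR$, $\H$: for $V = \R$ the claim is the ordinary suspension isomorphism $EG_+ \wedge_G (S^1 \wedge X) = S^1 \wedge (EG_+ \wedge_G X)$, while for $V = \tR$ and $V = \H$ one applies the Thom isomorphism to the (now single, low-rank) associated bundle over $B$, where once more the absence of a mod $2$ orientability obstruction is what makes it go through. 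Either way, the essential input is that we work over $\F$.
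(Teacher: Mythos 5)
Your argument is correct and is essentially the paper's own proof: the paper also forms the bundle $EG \times_{G} (V \times X) \to EG \times_{G} X$ and applies the relative Thom isomorphism theorem with $\F$ coefficients over the pair $(EG \times_{G} X, EG \times \pt)$, relying on mod $2$ orientability exactly as you do. Your additional bookkeeping (identifying the relative Thom space with $EG_+ \wedge_G \Sigma^V X$, checking $\Ring$-linearity, dualizing for homology) and the inductive reduction to $\R$, $\tR$, $\H$ are just expansions of the same argument.
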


\begin{proof} There is a $V$-bundle:
 $$p:EG \times_{G} (V \times X) \to EG \times_{G} X.$$ 
Applying the relative Thom isomorphism theorem (with $\F$ coefficients) to this bundle over the pair $(EG \times_{G} X, EG \times \pt)$, we obtain \eqref{eq:susp}. 
\end{proof}

\begin{remark}
\label{rem:Z}
Borel homology and Borel cohomology with $\Z$ coefficients are not invariant under arbitrary suspensions. The analogue of the isomorphism \eqref{eq:susp} with $\Z$ coefficients holds if $V$ is a trivial representation, and also (using the relative Thom isomorphism theorem) if $V$ is a complex representation of $G$ such as $\H$. However, it does not hold for non-trivial real representations such as $\tR$. 
\end{remark}

\subsection{Equivariant duality}
\label{sec:dual}
The problem mentioned in Remark~\ref{rem:Z} can be fixed (with $\Z$ coefficients) by using the $RO(G)$-graded homology theory of Lewis, May and McClure \cite{LMM, LMS}. This theory is invariant under suspension by any representation, up to a corresponding shift in the $RO(G)$-grading. In this paper we will only make use of the $RO(G)$-graded theory indirectly: With coefficients in $\F$, we can collapse its grading to $\Z$ via the natural map $RO(G) \to \Z, \ V \mapsto \dim V$, and the result is the usual Borel homology. It follows that Borel homology (with $\F$ coefficients) satisfies the various properties established in the literature for $RO(G)$-graded homology.

In particular, we are interested in the behavior of Borel homology under equivariant Spanier-Whitehead duality. This was studied in \cite{Wirthmuller1, Wirthmuller2, LMS, GreenleesMay} in the context of $RO(G)$-graded homology. We gather below a few facts taken from these sources. We simplify the exposition so as to be in terms of Borel homology with $\F$ coefficients, and also in terms of spaces rather than spectra. 

Recall that in non-equivariant algebraic topology, two pointed, connected spaces $X$ and $X'$ are said to be Spanier-Whitehead $m$-dual if there is a map $\eps: X \wedge X' \to S^m$ such that slant product with the fundamental class of $S^m$ induces an isomorphism $H_*(X) \to H^{m-*}(X)$ in all degrees \cite{Spanier, SpanierWhitehead}. The equivariant analogue of this is $V$-duality, with respect to a representation $V$ of $G$. The original definition, as given in \cite[Definition 3.4]{LMS}, involves an isomorphism on the equivariant stable homotopy of the infinite suspensions of $X$ and $X'$. In the case of finite $G$-CW complexes, according to  \cite[Theorem 3.6]{LMS}, there is an equivalent and more elementary definition:

\begin{definition}
Let $V$ be a finite-dimensional representation of $G$. Two pointed, finite $G$-CW complexes $X$ and $X'$ are called {\em (equivariantly) $V$-dual} if there exists a $G$-map $\eps: X \wedge X' \to V^+$ such that for any subgroup $H \subseteq G$, the fixed point set map $\eps^H: X^H \wedge (X')^H \to (V^H)^+$ induces a non-equivariant duality between $X^H$ and $(X')^H$.
\end{definition}

The examples of $V$-dual spaces that we need in this paper come from the following:

\begin{lemma}
\label{lem:Vdual}
Suppose $N$ is a smooth $G$-manifold with boundary embedded in a representation $V$ of $G$, such that $\dim N = \dim V = m$. Further, suppose that $\del N$ admits a decomposition $\del N = L  \cup L'$ with $L$ and $L'$ being smooth $(m-1)$-dimensional $G$-manifolds such that $L \cap L' = \del L = \del L'$. Then $N/L$ and $N/L'$ are equivariantly $V$-dual.
\end{lemma}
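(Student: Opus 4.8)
The plan is to deduce the lemma from classical Poincar\'e--Lefschetz duality, applied equivariantly by passing to fixed-point sets. Unwinding the definition of equivariant $V$-duality, we must exhibit a $G$-map $\eps : (N/L)\wedge(N/L')\to V^+$ whose fixed-point map $\eps^H$ is a non-equivariant $(\dim V^H)$-duality for every closed subgroup $H\subseteq G$. The key point is that the hypotheses of the lemma pass to fixed points: since $N$ has codimension $0$ in $V$ and the $G$-action is smooth, $N^H$ is a compact smooth manifold (with corners) of dimension $\dim V^H$; one has $(N/L)^H = N^H/L^H$ and $(V^+)^H = (V^H)^+$; and $\del(N^H)=L^H\cup (L')^H$ with $L^H\cap (L')^H = (L\cap L')^H = (\del L)^H = \del(L^H)$. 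Hence it suffices to construct $\eps$ so that on each fixed-point set it is the analogous map for $N^H\subseteq V^H$, and then to prove the statement in the non-equivariant setting of a compact manifold with corners of codimension $0$ in a vector space.

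To build $\eps$ I would use a ``diagonal pushed into the interior of $N$'' together with a Pontryagin--Thom collapse. Using compactness of $G$, I would choose a $G$-equivariant self-embedding $\phi : N\hookrightarrow N$ isotopic to the identity whose image $\phi(N)$ is a compact submanifold lying in the interior of $N$, at positive distance from $\del N$ (compress a $G$-invariant collar of $\del N$). Identify $(N/L)\wedge(N/L')$ with $(N\times N)/(L\times N\cup N\times L')$, and let $U\subseteq N\times N$ be a small neighbourhood of the pushed-in diagonal $\{(z,z):z\in\phi(N)\}\cong N$, chosen small enough that $(x,y)\mapsto x-y$ restricts to linear coordinates transverse to the diagonal; since $\phi(N)$ is bounded away from $\del N$, the set $U$ is disjoint from $L\times N\cup N\times L'$. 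Now define $\eps$ by collapsing the complement of $U$ to the basepoint and, on $U$, sending $(x,y)$ to $x-y$ rescaled by a fixed radial diffeomorphism that carries the relevant ball onto all of $V$ and pushes $\partial U$ to $\infty$. This $\eps$ is continuous on the quotient, $G$-equivariant, natural under passage to fixed points, and satisfies $\eps^{-1}(0)=\{(z,z):z\in\phi(N)\}$, a single copy of $N$ meeting $0$ transversally in the $V$-direction.

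For the final step I would check, non-equivariantly, that the slant-product map determined by $\eps$ and the fundamental class of $V^+=S^m$ is an isomorphism. Under $\tH_*(N/L;\F)\cong H_*(N,L;\F)$ and $\tH^{m-*}(N/L';\F)\cong H^{m-*}(N,L';\F)$, the transversality of the zero set above identifies this map with cap product by the relative fundamental class $[N,\del N]\in H_m(N,\del N;\F)$ --- the familiar statement that the diagonal realizes cap products. That this cap product is an isomorphism is precisely Poincar\'e--Lefschetz duality for the manifold-with-corners $N$ with boundary decomposition $\del N=L\cup L'$, which can be proved by smoothing the corner $\del L$ and citing the classical statement, or by a Mayer--Vietoris argument along $\del N=L\cup L'$. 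Applying this to each $N^H\subseteq V^H$ gives the required duality for all $H$, hence the equivariant $V$-duality of $N/L$ and $N/L'$. (The combination of these steps is an instance of equivariant Atiyah duality for manifolds with boundary, and could alternatively be extracted from the formalism of \cite{LMS}.)

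I expect the main obstacle to be the construction of $\eps$: making the perturbed diagonal map honestly continuous as a map out of the quotient space --- in particular, verifying that the pushed-in region remains uniformly separated from $L\times N\cup N\times L'$ near the corner $\del L=\del L'$ --- and arranging that its zero set is a single transverse copy of $N$, so that the local-degree computation identifying it with the Poincar\'e--Lefschetz isomorphism is valid, while keeping all the collars, bump functions and isotopies $G$-invariant and compatible with restriction to fixed-point sets. Granting that, the homological part is routine bookkeeping on top of the classical duality theorem.
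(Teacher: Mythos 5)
Your overall strategy---build an explicit $G$-map $\eps\colon (N/L)\wedge(N/L')\to V^+$ and then verify, subgroup by subgroup, that $\eps^H$ is a nonequivariant duality via Poincar\'e--Lefschetz duality for $N^H\subset V^H$---is a legitimate route, and it is genuinely different from the paper's, which never constructs a map at all: the paper invokes the equivariant Alexander duality theorem \cite[Theorem 4.1]{LMS} (for neighborhood retracts $A\subset X\subset V$, the spaces $X/A$ and $(V-A)/(V-X)$ are $V$-dual), applies it to $(X,A)=(N,L)$, and observes that $(V-L)/(V-N)$ is $G$-equivalent to $N/L'$. Your reduction to fixed points is sound (it matches the characterization of $V$-duality used in the paper), and the homological endgame would be fine, with the caveat that the slant-product isomorphism should be checked integrally rather than over $\F$ (harmless here, since $N^H$ is an orientable codimension-zero submanifold of the vector space $V^H$).

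The genuine gap is in the construction of $\eps$, and it is not the technicality you ask to be granted---it is fatal to the recipe as written. The zero set of $(x,y)\mapsto x-y$ in $N\times N$ is the \emph{full} diagonal, not the pushed-in copy $\{(z,z):z\in\phi(N)\}$; pushing $\phi(N)$ into the interior does not move this zero set, which still reaches $L\times N\cup N\times L'$ at points $(w,w)$ with $w\in\del N$. Consequently the diagonal必 crosses $\del U$: any open $U\supset\{(z,z):z\in\phi(N)\}$ meets the diagonal in a strictly larger open piece, so there are diagonal points on $\del U$. At such a point, every map of the form ``a positive rescaling of $x-y$ on $U$, basepoint outside $U$'' takes values at (or arbitrarily near) $0\in V$ on nearby interior diagonal points while being forced to the basepoint just outside, so it cannot be continuous on the quotient; in particular your claim $\eps^{-1}(0)=\{(z,z):z\in\phi(N)\}$ is false. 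To push the zero set itself off the boundary you would need a map like $(x,y)\mapsto x-\phi(y)$, but then well-definedness fails on $N\times L'$ (the one-sided push only takes care of $L\times N$); handling $L$ and $L'$ simultaneously is exactly where the collar/flow structure of the triad $(N;L,L')$ must be used asymmetrically, which is how the nonequivariant proofs you would want to imitate (e.g.\ \cite{Cornea}) actually proceed. So either repair the map along those lines (keeping everything $G$-invariant), or replace the construction by the citation-based argument of the paper.

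(One stray non-ASCII character slipped in above: read ``the diagonal must cross $\del U$''.)
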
 

\begin{proof}
The non-equivariant analogue of this lemma is well-known; see for example \cite{AtiyahThom} or \cite[Lemma 3.6]{Cornea}. The equivariant version is a direct consequence of \cite[Theorem 4.1]{LMS}, which states that (under certain technical assumptions, automatically satisfied for embeddings of smooth $G$-manifolds), if we have inclusions of $G$-spaces $A \subset X \subset V$ as neighborhood retracts, then $X/A$ and $(V - A) / (V - X)$ are dual with respect to $V$. For the case at hand, take $(X, A) = (N, L)$ and observe that $(V - L) / (V-N)$ is $G$-equivalent to $N/L'$. 
\end{proof}

If $X$ and $X'$ are $V$-dual, the Borel homology of $X$ is related to the Borel cohomology of $X'$ as follows. Let $m = \dim V$. First, the Borel cohomology of $X'$ can be viewed as an equivariant generalized homology of $X$, called {\em co-Borel homology} and denoted by $cH_*^G$:
\begin{equation}
\label{eq:coBorel}
c\tH^G_*(X; \F) \cong \tH^{m-*}_G(X' ; \F) =  \tH^{m-*}(EG_+ \wedge_{G} X'; \F).
\end{equation}

Second, the Borel and co-Borel homologies fit into a long exact sequence, whose third term is another equivariant generalized homology $tH_*^G$, called {\em Tate homology}:
\begin{equation}
\label{eq:tate}
\dots \To c\tH_{*}^G(X; \F) \To t\tH_{*}^G(X; \F) \To \tH_{*-2}^G(X; \F) \To  \dots 
\end{equation}

Tate homology was defined by Greenlees and May \cite{GreenleesMay} as the co-Borel homology of $\tilde EG \wedge X$, where $\tilde EG$ is the unreduced suspension of $EG$ (with one of the cone points as basepoint). We will need the following facts:
\begin{itemize}

\item Being an (equivariant) generalized homology theory, Tate homology satisfies the usual Eilenberg-Steenrod axioms: homotopy, excision and exactness. 

\item Tate homology with coefficients in $\F$ is invariant under suspensions by arbitrary representations of $G$:
\begin{equation}
\label{eq:tateSusp}
t\tH_*^{G} (\Sigma^V X; \F) \cong t\tH_{*- \dim V}^{G} (X; \F).
\end{equation}
This holds because Tate homology is a particular case of co-Borel homology.

\item If $X$ has a free $G$-action away from the basepoint, then 
\begin{equation}
\label{eq:tate0}
t\tH_*^G(X; \F) = 0.
\end{equation}
This is proved in \cite[Proposition 2.4]{GreenleesMay}.

\item Tate homology is $4$-periodic, that is,
\begin{equation}
t\tH_*^G(X; \F) \cong t\tH_{*-4}^G(X; \F).
\end{equation}
Indeed, the right hand side is the Tate homology of $\Sigma^{\H}X$ by \eqref{eq:tateSusp}. On the other hand, $\Sigma^{\H} X$ contains $X$ as a subset whose complement has a free $G$-action, so \eqref{eq:tate0} and exactness imply that $t\tH_*^G(\Sigma^{\H}X; \F) \cong t\tH_*^G(X; \F)$.

\item For $X=S^0$ we have a graded isomorphism:
\begin{equation}
\label{eq:tateS}
t\tH_{*-2}^G(S^0; \F) \cong v^{-1}\Ring \cong \F[q, v , v^{-1}]/(q^3).
\end{equation}
Indeed, the Tate {\em co}homology $t\tH^*_G(S^0; \F)$ is isomorphic (as a graded vector space) to $\F[q, v , v^{-1}]/(q^3)$, according to the computation in \cite[Corollary 9.10]{GreenleesMay}. Further, we have $t\tH^*_G(S^0; \F) \cong t\tH_{-*}^G(S^0; \F)$ by \cite[p. 58]{GreenleesMay}. Of course, given the structure of $\F[q,v, v^{-1}]/(q^3)$, reversing its grading is the same as shifting it by $2$. From here we get \eqref{eq:tateS}  as an isomorphism of graded vector spaces. The fact that \eqref{eq:tateS} is also an isomorphism of $\Ring$-modules follows from the fact that the exact sequence \eqref{eq:tate} (applied to $S^0$) is one of modules, together with the $4$-periodicity of Tate homology as a module.
\end{itemize}

These properties makes Tate homology computable in many cases. Combining \eqref{eq:coBorel} with \eqref{eq:tate} and with knowledge of $t\tH_*$, one can get information about the Borel homology of $X$ in terms of the Borel cohomology of $X'$. 

\begin{remark}
The discussion above should be compared with its analogue in the $S^1$-equivariant case, which appeared in in \cite[Section 5.2]{PFP} in connection with $S^1$-equivariant Seiberg-Witten Floer homology. The readers familiar with Heegaard Floer homology should think of the Borel, co-Borel and Tate homologies as similar to $\HF^+, \HF^-$ and $\HF^{\infty}$; compare \cite[Conjecture 1]{PFP}.
\end{remark}

\subsection{Spaces of type SWF} \label{sec:spacesSWF} The definition below is motivated by the construction of the Seiberg-Witten Floer spectrum in Section~\ref{sec:swf}. We let $G= \pin$ as before.

\begin{definition}
Let $s \geq 0$. A {\em space of type $\swf$ (at level $s$)} is a pointed, finite $G$-CW complex $X$ with the following properties:
\begin{enumerate}[(a)]
\item The $S^1$-fixed point set $X^{S^1}$ is $G$-homotopy equivalent to the sphere $(\tR^s)^+$;
\item The action of $G$ is free on the complement $X - X^{S^1}$.
\end{enumerate}
\end{definition}

Given a space $X$ of type $\swf$ at level $s$, we can apply Proposition~\ref{prop:loc} to the subcomplex $A = X^{S^1}$ and obtain
\begin{equation}
\label{eq:locswf}
v^{-1} \tH^*_G(X; \F) \cong v^{-1} \tH^*_G( (\tR^s)^+; \F) \cong (v^{-1}\Ring)_{[s]},
\end{equation}
where the last isomorphism follows from \eqref{eq:bg} and the suspension invariance of Borel cohomology (Proposition~\ref{prop:susp}). The notation $[s]$ indicates a grading shift by $s$, so that the element $1 \in \F[q, v, v^{-1}]/(q^3) \cong v^{-1}\Ring$ is moved to degree $s$.

Equation~\eqref{eq:locswf} implies that there are nonzero elements in $\tH^*_G(X; \F)$ in some degrees congruent to $s, s+1$ and $s+2  \ (\mod 4)$, such that these elements do not get killed by inverting $v$; that is, for any $l \geq 0$, multiplying them by $v^l$ does not give zero.

Thus, to the space $X$ we can associate the following three quantities:
\begin{align*}
a(X) &= \min \{  r \equiv s \ (\mod 4) \mid \exists \ x \in \tH^r_G(X; \F), \ v^l x \neq 0 \text{ for all } l \geq 0  \},\\
b(X) &= \min \{  r \equiv s+1 \ (\mod 4) \mid  \exists \ x \in \tH^r_G(X; \F), \ v^l x \neq 0 \text{ for all } l \geq 0 \} - 1,\\
c(X) &= \min \{ r \equiv s+2 \ (\mod 4) \mid  \exists \ x \in \tH^r_G(X; \F), \ v^l x \neq 0 \text{ for all } l \geq 0 \} - 2.
\end{align*}

Concretely, the Borel cohomology of $X$ looks like the Borel cohomology of $S^0$ in high enough degrees (after a grading shift by $s$). Indeed, forgetting the action of $q$ for the moment, we see that as an $\F[v]$-module, $\tH^*_G(X; \F)$ consists of some $\F[v]$-torsion part and three summands isomorphic to $\F[v]$, supported in degrees congruent to $s$, $s+1$ and $s+2$ modulo $4$. Since $X$ is a finite CW complex, we have that $\tH^*_G(X; \F)$ is finitely generated as an $\F[v]$-module, so its  $\F[v]$-torsion part is bounded above in grading. The quantities $a(X)$, $b(X)+1$ and $c(X) + 2$ describe the grading of $1 \in \F[v]$ in each of the $\F[v]$-free summands. 

It is clear from the construction that
\begin{equation}
\label{eq:mod4}
 a(X) \equiv b(X) \equiv c(X) \equiv s \pmod{4}.
\end{equation}
and that $a(X), b(X), c(X) \geq 0$.

To explore the properties of $a, b, c$ further, it is helpful to introduce an ``infinity'' version of Borel cohomology: 
$$ \iH^*_G(X; \F) = \text{image }\bigl (\tH^*_G(X; \F) \To v^{-1}\tH^*_G(X; \F) \bigr).$$

Observe that $\iH^*_G(X; \F)$ can be identified with the quotient of $\tH^*_G(X; \F)$ by the kernel of $v^l$ for $l \gg 0$. This quotient $\Ring$-module is the $\F[v]$-free part of $\tH^*_G(X; \F)$, consisting of the three summands mentioned above. 

Note also that $\iH^*_G(X; \F)$ is a graded $\Ring$-submodule of $v^{-1}\tH^*_G(X; \F) \cong (v^{-1}\Ring)_{[s]}$ supported in non-negative degrees. Let $n$ be a negative number congruent to $s$ modulo $4$. The submodule of $(v^{-1}\Ring)_{[s]}$ consisting of all elements in grading $\geq n$ can be identified with $\Ring_{[n]}$, so that $\iH^{*+n}_G(X; \F)$ becomes a graded ideal of the ring $\Ring$. Moreover, we must have $v^{-1} \bigl( \iH^{*+n}_G(X; \F)\bigr) = v^{-1}\Ring$.

The following lemma describes all the possibilities for the ideal $\iH^{*+n}_G(X; \F)$:
\begin{lemma}
\label{lem:yy}
Let $\J$ be a graded ideal of $\Ring = \F[q, v]/(q^3)$ such that $v^{-1}\J = v^{-1}\Ring$. Then
$$ \J = (v^i, qv^j, q^2 v^k),$$
for some $i \geq j \geq k \geq 0$.
\end{lemma}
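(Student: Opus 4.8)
The plan is to use the fact that $\Ring=\F[q,v]/(q^3)$ decomposes as an $\F[v]$-module into the direct sum of three pieces $\F[v]\cdot 1$, $\F[v]\cdot q$, and $\F[v]\cdot q^2$, concentrated in degrees $\equiv 0,1,2 \pmod{4}$ respectively (since $q$ has degree $1$, $v$ has degree $4$). A graded ideal $\J$ is in particular a graded $\F[v]$-submodule, so it respects this decomposition: $\J = \J_0 \oplus \J_1 \oplus \J_2$ where $\J_t = \J \cap \F[v]q^t$. Since $\F[v]$ is a PID and each $\F[v]q^t$ is free of rank one over $\F[v]$, each graded submodule $\J_t$ is either $0$ or of the form $v^a q^t \F[v]$ for a unique $a \geq 0$. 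The hypothesis $v^{-1}\J = v^{-1}\Ring$ forces each $\J_t$ to be nonzero (localizing at $v$ kills nothing in $\F[v]q^t$ and must recover all of $v^{-1}\F[v]q^t$), so we may write $\J_0 = v^i\F[v]$, $\J_1 = v^j q\F[v]$, $\J_2 = v^k q^2 \F[v]$ for some $i,j,k \geq 0$. This already gives $\J = (v^i, qv^j, q^2v^k)$ as an $\F[v]$-module, hence as an ideal.

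Next I would verify the inequalities $i \geq j \geq k$. This is where the ring structure (not just the $\F[v]$-module structure) enters: multiplying a generator of $\J_0$ by $q$ must land in $\J_1$, so $q \cdot v^i \in \J_1 = v^j q \F[v]$, forcing $i \geq j$; similarly $q \cdot (v^j q) = v^j q^2 \in \J_2 = v^k q^2\F[v]$ forces $j \geq k$. That completes the proof.

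The one point that needs a little care is the claim that a graded $\F[v]$-submodule of the free rank-one module $\F[v]q^t$ (graded so that the generator sits in degree $t$) is either zero or generated by a single monomial $v^a q^t$. This is standard: such a submodule is spanned by homogeneous elements, each homogeneous element is a scalar multiple of some $v^a q^t$ (scalars in $\F$, so it is exactly $v^a q^t$ when nonzero), and among all $a$ occurring one takes the minimum; every other occurring power is then a multiple of it because the module is closed under multiplication by powers of $v$. Then $v^{-1}(v^a q^t\F[v]) = v^{-1}\F[v] \cdot q^t$, which matches $v^{-1}\F[v]q^t$, while $v^{-1}(0) = 0 \neq v^{-1}\F[v]q^t$; this is what makes the localization hypothesis equivalent to all three pieces being nonzero.

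I do not expect a serious obstacle here — the result is essentially a structure statement for graded ideals in a small graded ring, and the proof is a direct computation once one observes the $\Z/4$-graded splitting into three $\F[v]$-lines. The only thing worth stating cleanly is the compatibility between the $\F[v]$-module decomposition and the inequalities imposed by multiplication by $q$.
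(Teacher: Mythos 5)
Your proof is correct and follows essentially the same route as the paper's: decompose $\J$ as a graded $\F[v]$-module into the three summands supported in degrees $\equiv 0,1,2 \pmod 4$, use the hypothesis $v^{-1}\J = v^{-1}\Ring$ to see each summand is a nonzero (hence principal) graded $\F[v]$-submodule generated by $v^i$, $qv^j$, $q^2v^k$, and then use closure under multiplication by $q$ to obtain $i \geq j \geq k$. No gaps; the extra care you take in justifying the structure of graded submodules of $\F[v]q^t$ is the only point the paper leaves implicit.
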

\begin{proof}
Let us ignore the action of $q$ for the moment. Thus, we view $\Ring$ as a graded $\F[v]$-module and $\J$ as a graded submodule of $\Ring$. We have a decomposition $\J =  \J_0 \oplus \J_1 \oplus \J_2$, where $\J_s$ denotes the part of $\J$ supported in degrees $\equiv s \ (\mod 4)$. Then $\J_0$ is a graded submodule of $\F[v]$, and it is nontrivial because $v^{-1}\J = v^{-1}\Ring$ implies $v^{-1} \J_0 = \F[v, v^{-1}]$. Therefore, we must have $\J_0 = (v^i)$ for some $i \geq 0$. Similarly, we see that $\J_1 = (qv^j)$ and $\J_2 = (q^2 v^k)$ for some $j, k \geq 0$. 

We now consider the action of $q$. Since $\J$ must be invariant under this action and $v^i \in \J_0 \subset \J$, we deduce that $qv^i \in \J_1$, so $i \geq j$. Similarly, we get $j \geq k$.
\end{proof}

If $\iH^{*+n}_G(X; \F) = (v^i, qv^j, q^2 v^k)$, we see that
$$ a(X) = 4i+n, \ b(X)= 4j + n, \ c(X) = 4k+n.$$

It follows from Lemma~\ref{lem:yy} that:
\begin{equation}
\label{eq:ineq}
 a(X) \geq b(X) \geq c(X). 
\end{equation}

Observe that if $X$ is a space of type $\swf$ at level $s$, then the suspensions $\Sigma^{\tR} X$ and $\Sigma^{\H} X$ are of type $\swf$ at levels $s+1$ and $s$, respectively.

\begin{lemma}
\label{lem:abcSusp}
Let $X$ be a space of type $\swf$, and $V$ a representation of $G$ of the form $\tR^n \oplus \H^p$, for some $n, p \geq 0$. Then:
$$ a(\Sigma^V X) = a(X) + \dim V, \ b(\Sigma^V X) = b(X) + \dim V, \ c(\Sigma^V X) = c(X) + \dim V.$$
\end{lemma}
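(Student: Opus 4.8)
The plan is to reduce the general case $V = \tR^n \oplus \H^p$ to the two elementary suspensions $\Sigma^{\tR}$ and $\Sigma^{\H}$, which together generate all such $V$, and then handle each of these by invoking the suspension invariance of Borel cohomology (Proposition~\ref{prop:susp}) together with the explicit characterizations \eqref{eq:ax}--\eqref{eq:cx} of $a$, $b$, $c$.

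\begin{proof}
It suffices to prove the statement when $V$ is either $\tR$ or $\H$, since a general $V$ of the form $\tR^n \oplus \H^p$ is obtained by iterating these two operations, and $\dim$ is additive under direct sum. Observe first that, as noted just before the lemma, if $X$ is of type $\swf$ at level $s$ then $\Sigma^{\tR}X$ is of type $\swf$ at level $s+1$ and $\Sigma^{\H}X$ is of type $\swf$ at level $s$; in particular the invariants $a, b, c$ are defined for these suspensions.

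Consider first $V = \H$, so $\dim V = 4$. By Proposition~\ref{prop:susp} we have an isomorphism of $\Ring$-modules $\tH^*_G(\Sigma^{\H}X; \F) \cong \tH^{*-4}_G(X; \F)$. Since this isomorphism intertwines the $v$-action and shifts all degrees up by $4$, it carries, for each congruence class $r \pmod 4$, the set of elements $x \in \tH^r_G(\Sigma^{\H}X;\F)$ with $v^l x \neq 0$ for all $l$ bijectively onto the corresponding set in $\tH^{r-4}_G(X;\F)$. (Note $4 \equiv 0 \pmod 4$, so $r$ and $r-4$ lie in the same residue class, which is why the level is unchanged.) Comparing with \eqref{eq:ax}--\eqref{eq:cx} — or equivalently with the Borel-homology description, using that Proposition~\ref{prop:susp} also gives the dual isomorphism on $\tH_*^G$ and hence on $\iH_*^G$ as defined in \eqref{eq:iH} — we get $a(\Sigma^{\H}X) = a(X)+4$, and likewise for $b$ and $c$.

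Now consider $V = \tR$, so $\dim V = 1$, and $\Sigma^{\tR}X$ is of type $\swf$ at level $s+1$. Again by Proposition~\ref{prop:susp}, $\tH^*_G(\Sigma^{\tR}X;\F) \cong \tH^{*-1}_G(X;\F)$ as $\Ring$-modules, with all degrees shifted up by $1$. For $\Sigma^{\tR}X$ the relevant residue classes are $s+1$, $s+2$, $s+3 \pmod 4$, and the defining formulas \eqref{eq:ax}--\eqref{eq:cx} for $a(\Sigma^{\tR}X)$, $b(\Sigma^{\tR}X)$, $c(\Sigma^{\tR}X)$ use exactly these. The degree-$1$ shift matches $s+1 \leftrightarrow s$, $s+2 \leftrightarrow s+1$, $s+3 \leftrightarrow s+2$, so the $v$-nontorsion generators of $\tH^*_G(\Sigma^{\tR}X;\F)$ in these three classes correspond to those of $\tH^*_G(X;\F)$ in classes $s, s+1, s+2$, each with grading lower by $1$. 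Tracking the constant offsets $0, -1, -2$ built into \eqref{eq:ax}--\eqref{eq:cx} (which are the same for $X$ and $\Sigma^{\tR}X$ since these offsets depend only on the position within the residue pattern, not on $s$), we obtain $a(\Sigma^{\tR}X) = a(X)+1$, $b(\Sigma^{\tR}X)=b(X)+1$, $c(\Sigma^{\tR}X)=c(X)+1$. Iterating and combining the two cases proves the lemma for all $V = \tR^n \oplus \H^p$.
\end{proof}

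The only point requiring care — and the step I would flag as the main obstacle — is the bookkeeping of residue classes and constant offsets in the $\tR$-suspension case: one must check that the shift by $1$ in the grading interacts correctly with the fact that $a, b, c$ are read off from residue classes $s, s+1, s+2$ (with offsets $0, -1, -2$) and that after suspension these become $s+1, s+2, s+3$; the key observation making this work is that the $v$-action, which governs the "infinite tail," commutes with the Thom isomorphism of Proposition~\ref{prop:susp}, so no reindexing of the $q$-action or of the periodicity is needed beyond the overall degree shift.
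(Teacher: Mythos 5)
Your proof is correct and follows the same route as the paper, which simply observes that the lemma is an immediate consequence of the suspension invariance of Borel (co)homology in Proposition~\ref{prop:susp}; you have just made the residue-class and offset bookkeeping explicit. Nothing is missing, and the reduction to the cases $V=\tR$ and $V=\H$ is a harmless elaboration of the same idea.
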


\begin{proof}
This follows immediately from Proposition~\ref{prop:susp}.
\end{proof}

Finally, let us note that we can alternatively describe $a(X), b(X), c(X)$ in terms of Borel homology. If $x$ is a nonzero element of $\tH^r_G(X; \F)$, we can complete $x$ to a basis of Borel cohomology in degree $r$ and construct a dual element $x^*$ in Borel homology. The condition $v^l x \neq 0$ is equivalent to $0 \neq x^* \in \im(v^l)$. Therefore, if we let
\begin{equation}
\label{eq:iH}
\iH_*^G(X; \F) := \bigcap_{l \geq 0} \im \bigl (v^l : \tH_{*+4l}^G(X; \F) \To \tH_*^G(X; \F) \bigr),
\end{equation}
we obtain:
\begin{align}
a(X) &= \min \{  r \equiv s \ (\mod 4) \mid \exists \ x,\ 0 \neq x \in  {\iH}_r^G(X; \F)  \}, \label{eq:ax}\\
b(X) &= \min \{  r \equiv s +1 \ (\mod 4) \mid \exists \ x,\ 0 \neq x \in {\iH}_r^G(X; \F) \} - 1, \label{eq:bx}\\
c(X) &= \min \{  r \equiv s+2 \ (\mod 4) \mid \exists \ x, \ 0 \neq x \in {\iH}_r^G(X; \F)  \} - 2 \label{eq:cx}.
\end{align}

 \subsection{Examples}
 The simplest example of a space of type $\swf$ is $S^0$, for which we have
 $$ a(S^0) = b(S^0)= c(S^0) = 0.$$ 
 
To obtain more interesting examples, suppose that $G$ acts freely on a finite $G$-CW-complex $Z$, and let $Q = Z/G$ be the respective quotient. Let $$\tilde Z = \bigl( [0,1]\times Z \bigr ) / (0,z) \sim (0, z') \text{ and } (1,z) \sim (1, z') \text{ for all } z,z' \in Z$$ denote the {\it unreduced} suspension of $Z$, where $G$ acts trivially on the $[0,1]$ factor. We view $\tilde Z$ as a pointed $G$-space, with one of the two cone points being the basepoint. Clearly $\tilde Z$ is of type $\swf$, with $(\tilde Z)^{S^1} = S^0$. Note that the cone of the inclusion of $(\tilde Z)^{S^1}$ into $\tilde Z$ is the reduced suspension $\Sigma^{\R} Z_+$. Information about the Borel cohomology of $\tilde Z$ can be extracted from the long exact sequence:
$$ \dots \To \tH^*_G(\tilde Z; \F) \To \tH^*_G(S^0; \F) \To \tH^{*+1}_G(\Sigma^{\R}Z_+; \F) \To \dots$$
Since $G$ acts freely on $Z$, we have $\tH^{*+1}_G(\Sigma^{\R}Z_+; \F) \cong \tH^*(Z_+) \cong H^*(Q)$, so the above sequence can be written
\begin{equation}
\label{eq:z}
\dots \To \tH^*_G(\tilde Z; \F) \To H^*(BG; \F) \xrightarrow{\phantom{b}\kappa^*} H^*(Q; \F) \To \dots 
\end{equation}
The map $\kappa^*$ is induced from the map $\kappa: Q \to BG$ that classifies the $G$-bundle $Z$ over $Q$. The image of $\kappa^*$ produces the $G$-characteristic classes of that bundle.

\begin{example}
\label{ex:G}
Let $Z = G$, acting on itself via left multiplication, so that the quotient $Q$ is a single point. As a topological space, $\tilde G$ is the suspension of two disjoint circles. In the exact sequence \eqref{eq:z}, the map $\kappa^*$ is an isomorphism in degree $0$. We deduce that $\tH^*_G(\tilde G; \F)$ is isomorphic to the submodule of $H^*(BG; \F) \cong \F[q,v]/(q^3)$ consisting of the elements in degrees $\geq 1$. Therefore,
$$ a(\tilde G) = 4, \ \ b(\tilde G) = c(\tilde G) = 0.$$ 
\end{example}

\begin{example}
\label{ex:zn}
More generally, for $n \geq 1$, let 
$$Z_n = \bigl( S(\C^n) \times \{0\} \bigr) \cup \bigl( \{ 0 \} \times S(j\C^n) \bigr) \subset \C^n \oplus j\C^n \cong \H^n.$$
This is a $G$-invariant subset of $\H^n$, with quotient $Q_n = \cp^{n-1}$. (In particular, $Z_1 = G$ is the previous example.) The bundle $Z_n \to \cp^{n-1}$ can be viewed as induced from the $S^1$-bundle $S(\C^n) \to \cp^{n-1}$ via the monomorphism $S^1 \to G$. Thus, the classifying map $\kappa: \cp^{n-1} \to BG$ factors as
$$ \cp^{n-1} \lhook\joinrel\To \cp^{\infty} \cong BS^1 \xrightarrow{\phantom{b}\eta} BG.$$

We can figure out the map induced by $\eta$ on cohomology by noticing that there is a fiber bundle 
$$S^2 \To \cp^{\infty} \To \hp^{\infty}$$
which double covers the bundle \eqref{eq:fib2}. Using the functorial properties of the Leray-Serre spectral sequences, we see that $\eta^* : H^*(BG; \F) \to H^*(\cp^{\infty}; \F)$ is an isomorphism in degrees divisible by $4$, and zero otherwise. It follows that the map 
$$\kappa_*: H^*(BG; \F) \to H^*(\cp^{n-1}; \F)$$ is an epimorphism in degrees divisible by $4$, and zero otherwise. Using this information, we deduce from the exact sequence~\eqref{eq:z} that:
\begin{equation}
\label{eq:azn}
 a(\tilde Z_n) = 4\lceil n/2 \rceil, \ \ b(\tilde Z_n) = c(\tilde Z_n) = 0.
 \end{equation}
\end {example}

\begin{example}
\label{ex:znp}
For $n \geq 1$, let
$$ Z'_n = S(\C^n) \times S(j\C^n) \subset \C^n \oplus j\C^n \cong \H^n.$$
This is a $G$-invariant subset of $\H^n$. We shall see in the next subsection (Example~\ref{ex:dual}) that:
\begin{equation}
\label{eq:zpn}
 a(\tilde Z'_n) =  b(\tilde Z'_n) = 4n, \ \ c(\tilde Z'_n) = 4n - 4\lceil n/2 \rceil.
 \end{equation}
\end{example}

\subsection{Other properties}
Let us understand the behavior of the invariants $a, b, c$ under equivariant Spanier-Whitehead duality:

\begin{proposition}
\label{prop:abcDuality}
Suppose $X$ and $X'$ are spaces of type $\swf$ that are $V$-dual, for some $G$-representation $V \cong \tR^n \oplus \H^p$. Then:
\begin{equation}
\label{eq:abcxx}
 a(X') = \dim V -c(X), \ \ b(X') = \dim V - b(X), \ \ c(X') = \dim V - a(X).
 \end{equation}
\end{proposition}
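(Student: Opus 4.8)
The plan is to express the Borel cohomology of $X'$ in terms of the co-Borel homology of $X$ via the duality \eqref{eq:coBorel}, and then to locate the three invariants of $X'$ by running the Tate exact sequence \eqref{eq:tate} for $X$. Put $m=\dim V$ and write $V\cong\tR^n\oplus\H^p$. Since $S^1$ acts freely on $\H\setminus 0$ we have $V^{S^1}=\tR^n$, so the fixed-point map $\eps^{S^1}\colon X^{S^1}\wedge (X')^{S^1}\to(\tR^n)^+$ is an ordinary (non-equivariant) $n$-duality; together with $X^{S^1}\simeq(\tR^s)^+$ this forces $(X')^{S^1}\simeq(\tR^{n-s})^+$, so $X'$ is of type $\swf$ at level $s'=n-s$, and in particular $s'\equiv m-s\pmod 4$, compatibly with the mod $4$ congruences built into \eqref{eq:abcxx}.

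By \eqref{eq:coBorel} there is an isomorphism of graded $\Ring$-modules $\tH^\rho_G(X';\F)\cong c\tH^G_{m-\rho}(X;\F)$, hence also $v^{-1}\tH^\rho_G(X';\F)\cong v^{-1}c\tH^G_{m-\rho}(X;\F)$. The first key point is that $v^{-1}c\tH^G_*(X;\F)\cong t\tH^G_*(X;\F)$: localizing the exact sequence \eqref{eq:tate} (localization is exact) kills the Borel homology term, because $\tH^G_*(X;\F)$ is bounded below while $v$ lowers degree, so every class is killed by a power of $v$, and leaves the Tate term unchanged, $v$ acting invertibly on it; hence $f\colon c\tH^G_*(X;\F)\to t\tH^G_*(X;\F)$ localizes to an isomorphism. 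A diagram chase then identifies the localization map $\tH^*_G(X';\F)\to v^{-1}\tH^*_G(X';\F)$, after reversing the grading by $m-\cdot$, with $f$ itself; so $\iH^{*}_G(X';\F)$ corresponds to $\im(f)$, which by exactness of \eqref{eq:tate} equals $\ker\bigl(g\colon t\tH^G_r(X;\F)\to\tH^G_{r-2}(X;\F)\bigr)$.

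The heart of the proof, and the step I expect to be the main obstacle, is the identification
$$\im\bigl(g\colon t\tH^G_r(X;\F)\to\tH^G_{r-2}(X;\F)\bigr)\;=\;\iH^G_{r-2}(X;\F)\;:=\;\bigcap_{l\ge 0}\im\bigl(v^l\colon\tH^G_{r-2+4l}(X;\F)\to\tH^G_{r-2}(X;\F)\bigr),$$
the ``$v$-stable part'' of Borel homology. The inclusion $\subseteq$ is formal: $v$ is invertible on $t\tH^G_*$ and $g$ is $\Ring$-linear, so every element of $\im(g)$ is infinitely $v$-divisible. For $\supseteq$, given $y$ in the right-hand side I would push it forward by the subsequent map $h$ of \eqref{eq:tate} into co-Borel homology; $h(y)$ is again infinitely $v$-divisible, but $c\tH^G_*(X;\F)\cong\tH^{m-*}_G(X';\F)$ is bounded \emph{above} (the Borel cohomology of the finite $G$-CW complex $X'$ vanishes in negative degrees), so since $v$ lowers degree the source of $v^l$ vanishes for $l\gg 0$ and $\bigcap_l\im(v^l)=0$ there; hence $h(y)=0$ and $y\in\im(g)$ by exactness. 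This boundedness of co-Borel homology is the one genuinely new ingredient; everything else is formal manipulation of the machinery already in place.

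It remains to assemble the answer, which is bookkeeping. By \eqref{eq:tateS}, together with the freeness of the $G$-action on $X\setminus X^{S^1}$ (which by \eqref{eq:tate0} reduces Tate homology to that of $(\tR^s)^+$), one has $t\tH^G_*(X;\F)\cong(v^{-1}\Ring)_{[s+2]}$, so $t\tH^G_r(X;\F)$ is one-dimensional for $r\not\equiv s+1\pmod 4$ and zero otherwise. Feeding this into the crux identity together with the descriptions of $\iH^G_*(X;\F)$ in terms of $a(X),b(X),c(X)$ recorded in \eqref{eq:ax}--\eqref{eq:cx}, a dimension count shows that $\iH^G_\rho(X';\F)$ is nonzero precisely when $m-\rho\not\equiv s+1\pmod 4$ and $\iH^G_{m-\rho-2}(X;\F)=0$. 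Substituting this into the defining minima \eqref{eq:ax}--\eqref{eq:cx} for $X'$ (at level $s'$), and using $s'\equiv m-s\pmod 4$, the three minimal gradings work out to $m-c(X)$, $m-b(X)+1$ and $m-a(X)+2$, which is exactly \eqref{eq:abcxx}.
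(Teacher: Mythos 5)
Your proof is correct and follows essentially the same route as the paper: the duality isomorphism \eqref{eq:coBorel} is fed into the Tate sequence \eqref{eq:tate}, the Tate homology of $X$ is computed to be $(v^{-1}\Ring)_{[s+2]}$ via \eqref{eq:tate0} and \eqref{eq:tateS}, and the invariants are then read off from a degree-by-degree analysis of the resulting sequence. Your explicit verification that $\im(g)=\iH^G_{*}(X;\F)$ (using that co-Borel homology is bounded above) is exactly the content of the paper's asserted ``trimmed'' exact sequence \eqref{eq:abcDual2}, so you have merely made that step explicit.
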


\begin{proof}
Let $m = \dim V = n + 4p$, and assume that $X$ is of type $\swf$ at level $s$. By the definition of $V$-duality, the fixed point sets $X^{S^1} \cong (\tR^s)^+$ and $(X')^{S^1}$ are non-equivariantly $n$-dual. Therefore, $X'$ must be of type $\swf$ at level $n-s$.

Using the isomorphism \eqref{eq:coBorel} and the exact sequence \eqref{eq:tate}, we see that the Borel homology of $X$ is related to the Borel cohomology of $X'$ by a long exact sequence of $\Ring$-modules:
\begin{equation}
\label{eq:abcDual}
\dots \To \tH^{m-*}_G(X'; \F) \To t\tH_{*}^G(X; \F) \To \tH_{*-2}^G(X; \F) \To  \dots 
\end{equation}

The Tate homology of $X$ can be computed using the properties mentioned at the end of Subsection~\ref{sec:dual}. Since $X - X^{S^1}$ has a free $G$-action, Equations~\eqref{eq:tate0} and \eqref{eq:tateS} combined with excision and suspension invariance imply that
$$ t\tH_*^G(X; \F) \cong t\tH_*^G(X^{S^1}; \F) \cong t\tH_{*-s}^G(S^0; \F) \cong (v^{-1}\Ring)_{[s+2]}.$$

Let us study the exact sequence \eqref{eq:abcDual} more closely. Since every element of $v^{-1}\Ring$ is in the image of $v^l$ for all $l$, it follows that the map from $ t\tH_{*}^G(X; \F)$ to $\tH_{*-2}^G(X; \F)$ factors through the submodule $\iH_{*-2}^G(X; \F)$. Similarly, since $v^l$ is an isomorphism on $v^{-1}\Ring$ for all $l$, the map from $\tH^{m-*}_G(X'; \F)$ to $t\tH_{*}^G(X; \F)$ must takes the kernel of $v^l$ to zero, i.e., it must factor through the quotient module $\iH^{m-*}_G(X'; \F)$. (Recall that $\iH^{*}_G$ was identified with the quotient of $H^{*}_G$ by $v^l$ for $l \gg 0$.) Therefore, we can write a trimmed version of the exact sequence \eqref{eq:abcDual} that involves only the ``infinity'' parts of Borel homology and cohomology. After shifting degrees by $s+2$, it reads:
\begin{equation}
\label{eq:abcDual2}
\dots \To {\iH}^{m-s-2-*}_G(X'; \F) \To  (v^{-1}\Ring)_* \To {\iH}_{*+s}^G(X; \F) \To  \dots 
\end{equation}

The exactness of \eqref{eq:abcDual2} follows from the exactness of \eqref{eq:abcDual}.

Recall that the infinity parts of Borel homology and cohomology are determined by the invariants $a$, $b$ and $c$. Precisely, we have:
$$ {\iH}^{m-s-2-r}_G(X'; \F) = \begin{cases}
\F & \text{ if } r = m-a(X')-s-4j-2, \ j \geq 0,\\
\F & \text{ if } r = m-b(X')-s-4j-3, \ j \geq 0,\\
\F & \text{ if } r = m-c(X')-s-4j-4, \ j \geq 0,\\
0 & \text{ otherwise,}
\end{cases}$$
and
$$
{\iH}_{r+s}^G(X; \F) =  \begin{cases}
\F & \text{ if } r = a(X)-s+4j, \ j \geq 0,\\
\F & \text{ if } r = b(X)-s+4j+1, \ j \geq 0,\\
\F & \text{ if } r = c(X)-s+4j+2, \ j \geq 0,\\
0 & \text{ otherwise.}
\end{cases}$$

Note that $a(X), b(X)$ and $c(X)$ are all congruent to $s$ modulo $4$, and similarly $a(X'), b(X')$ and $c(X')$ are all congruent to $n-s$ (hence to $m-s=n+4p-s$) modulo $4$. Taking into account the inequalities \eqref{eq:ineq} for the invariants of $X$ and $X'$, an analysis of the exact sequence \eqref{eq:abcDual2} shows that we must have the desired constraints \eqref{eq:abcxx}.
\end{proof}

\begin{example}
\label{ex:dual}
Let $Z_n$ be the space considered in Example~\ref{ex:zn}. Its unreduced suspension $\tilde Z_n$ can be identified with the subset
$$   \bigl( \C^n \times \{0\} \bigr) \cup \bigl( \{ 0 \} \times j\C^n \bigr) \cup \{\infty\} \subset (\C^n \oplus j\C^n)^+ \cong (\H^n)^+  $$
The quotient $(\H^n)^+/\tilde Z_n$ admits a $G$-equivariant deformation retraction onto the unreduced suspension $\tilde{Z_n'}$ of the space $ Z_n' = S(\C^n) \times S(j\C^n)$ from Example~\ref{ex:znp}. It follows from  \cite[Theorem 4.1]{LMS} that $\tilde{Z_n'}$ is $\H^n$-dual to $\tilde{Z_n}$. The calculations in \eqref{eq:azn} together with Proposition~\ref{prop:abcDuality} imply the results for $a(\tilde Z_n'), b(\tilde Z_n'), c(\tilde Z_n')$ stated in \eqref{eq:zpn}.
\end{example}

Another useful result is the behavior of $a(X), b(X), c(X)$ under a certain kind of equivariant maps:
\begin{proposition}
\label{prop:abcIneq}
Suppose $X$ and $X'$ are spaces of type $\swf$ at the same level $m$, and suppose that $f: X \to X'$ is a $G$-equivariant map whose $S^1$-fixed point set map is a $G$-homotopy equivalence. Then:
$$ a(X) \leq a(X'), \ b(X) \leq b(X'), \ c(X) \leq c(X').$$
\end{proposition}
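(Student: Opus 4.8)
The plan is to read off both triples $(a,b,c)$ from the localized equivariant cohomology and to exploit the naturality of the localization isomorphism (Proposition~\ref{prop:loc}) with respect to $f$. Recall that $\iH^*_G(X;\F)$ is by definition the image of $\tH^*_G(X;\F)$ in $v^{-1}\tH^*_G(X;\F)$; in particular, for a space $X$ of type $\swf$ at level $m$ the description of $a,b,c$ in terms of Borel cohomology shows that $a(X)$ is the least $r\equiv m \pmod 4$ with $\iH^r_G(X;\F)\neq 0$, while $b(X)+1$ and $c(X)+2$ are the least such $r$ in the residue classes $m+1$ and $m+2$ mod $4$. Since $X'$ is also of type $\swf$ at level $m$, the very same formulas (with the same residue classes) compute $a(X'),b(X'),c(X')$.

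First I would show that $f^*$ becomes an isomorphism after inverting $v$. The inclusions $X^{S^1}\hookrightarrow X$ and $(X')^{S^1}\hookrightarrow X'$, together with $f$ and its restriction to $S^1$-fixed point sets, form a commutative square; applying Borel cohomology and inverting $v$, the two restriction maps become isomorphisms by Proposition~\ref{prop:loc}, while the bottom map becomes an isomorphism because $f|_{X^{S^1}}$ is a $G$-homotopy equivalence by hypothesis. Hence $f^*$ induces an isomorphism of graded $\Ring$-modules
$$ f^*_{v^{-1}} \colon v^{-1}\tH^*_G(X';\F) \xrightarrow{\ \cong\ } v^{-1}\tH^*_G(X;\F),$$
and by \eqref{eq:locswf} each side is isomorphic to $(v^{-1}\Ring)_{[m]}$, so every graded piece of these modules — and of the submodules $\iH^*_G(X';\F)$ and $\iH^*_G(X;\F)$ — is at most one-dimensional over $\F$.

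Now naturality of the map $\tH^*_G(-;\F)\to v^{-1}\tH^*_G(-;\F)$ with respect to $f$ shows that $f^*_{v^{-1}}$ carries the image $\iH^*_G(X';\F)$ into the image $\iH^*_G(X;\F)$. Since $f^*_{v^{-1}}$ is injective and the graded pieces involved are at most one-dimensional, it follows that $\iH^r_G(X';\F)\neq 0$ implies $\iH^r_G(X;\F)\neq 0$ for every $r$; that is, the set of degrees in which the ``infinity part'' is nonzero is at least as large for $X$ as for $X'$. Taking minima within each of the three relevant residue classes mod $4$, and using the formulas recalled above (the constants $-1$ and $-2$ being the same for both spaces), gives $a(X)\le a(X')$, $b(X)\le b(X')$, and $c(X)\le c(X')$. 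The only step that genuinely uses a hypothesis, and hence the main point to get right, is the first one — that $f^*$ is an isomorphism after inverting $v$ — which rests exactly on $f$ being a $G$-homotopy equivalence on $S^1$-fixed points together with Proposition~\ref{prop:loc}; the remaining subtlety is purely bookkeeping, namely that it is the \emph{cohomological} (image) formulation of $\iH$, rather than the homological one, that makes the inclusion and the direction of the inequality transparent. After that the argument is formal.
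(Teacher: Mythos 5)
Your proof is correct and follows essentially the same route as the paper: the one substantive step — that the hypothesis on the $S^1$-fixed point map together with the localization theorem (Proposition~\ref{prop:loc}) makes $f^*$ an isomorphism after inverting $v$ — is exactly the paper's opening move. The remainder differs only in bookkeeping: you stay with the cohomological image-formulation of the infinity part and conclude via naturality of localization plus injectivity of the localized map, whereas the paper dualizes to Borel homology and runs a short element chase with $\iH_*^G$ (the intersection of the images of $v^l$); both implementations work, and yours is if anything slightly leaner since it avoids the dualization and the ``isomorphism in large enough degrees'' step.
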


\begin{proof}
Since the $S^1$-fixed point set map associated to $f$ is a $G$-homotopy equivalence, the functoriality of the localization maps in \eqref{eq:locswf} implies that $f$ induces an isomorphism on Borel cohomology after inverting $v$. Given the structure of Borel cohomology for a space of type $\swf$, this means that $f$ induces an isomorphism on Borel cohomology in large enough degrees. By taking duals, we see that the map induced by $f$ on Borel homology:
$$ f_*: \tH_*^G(X; \F) \to \tH_*^G(X'; \F)$$
must also be an isomorphism in large enough degrees. Since $f_*$ commutes with the action of $v$, it must map the submodule $\iH_*^G(X; \F)$ to $\iH_*^G(X'; \F)$. 

Suppose we have a nonzero element $x' \in {\iH}_r^G(X'; \F)$ in some grading $r$. For any $l$ we can find some $y' \in {\iH}_{r+4l}^G(X'; \F)$ with  $x' = v^l y'$. If we choose $l$ large enough, $y'$ must be of the form $f_*(y)$ for some $y \in {\iH}_{r+4l}^G(X; \F)$. Let $x = v^l y \in \tH_r^G(X; \F).$ Since $y$ is in $ {\iH}_{*}^G(X; \F)$, so is $x$; moreover, we have $f_*(x) = x'$, so $x$ is nonzero. Thus, we found a nonzero element in $\iH_r^G(X; \F)$. Using the definitions of $a, b, c$ in terms of Borel homology, the desired inequalities follow. 
\end{proof}

\subsection{A fourth quantity} \label{sec:fourth} Let us describe another numerical invariant, $d_p(X)$, that can be associated to a space $X$ of type $\swf$. The quantity $d_p(X)$ depends  also on the choice of $p \in \Z$, which can be either zero or a prime. Let $\f$ be a field of characteristic $p$. Instead of the $\pin$-equivariant cohomology of $X$, we use the $S^1$-equivariant cohomology with coefficients in $\f$:
$$ \tH^*_{S^1}(X; \f) = \tH^*(ES^1_+ \wedge X ; \f),$$
which is a module over $H^*_{S^1}(\pt; \f) = \f[U]$, with $U$ in degree $2$. Since $S^1 \subset G$ acts trivially on $\tR$, suspension by $\R$ preserves $S^1$-equivariant cohomology, up to a degree shift. The same is true for suspension by the complex representation $\H$; see Remark~\ref{rem:Z}. This is why in the $S^1$-equivariant case we can let the field $\f$ be arbitrary.

Localization with respect to $S^1$ shows that, if $X$ is of type $\swf$ at level $s$, 
$$ U^{-1} \tH^*_{S^1}(X; \f) \cong U^{-1} \tH^*_{S^1}((\tR^s)^+; \f) \cong \f[U, U^{-1}]_{[s]}$$
We define
$$  \iH^*_{S^1}(X; \f) = \text{image }\bigl (\tH^*_{S^1}(X; \f) \To U^{-1}\tH^*_{S^1}(X; \f) \bigr).$$
and let $d_p(X)$ be the minimal degree of a non-zero element in $\iH^*_{S^1}(X; \f)$. The same methods as in the $G$-equivariant case can be used to prove the analogues of Lemma~\ref{lem:abcSusp}, Proposition~\ref{prop:abcDuality} and Proposition~\ref{prop:abcIneq} for $d_p$ instead of $a, b, c$. In particular, if $X$ and $X'$ are $V$-dual, we have
$$ d_p(X') = \dim V - d_p(X).$$

The $S^1$-equivariant and $G$-equivariant cohomologies of a space are related to each other by equivariant transfer;  see for example \cite[Ch. III]{Bredon} or \cite[Proposition 9.13]{tomDieck}. Precisely, the element $j \in G$ induces an involution on $S^1$-equivariant cohomology. Let $\bigl ( H^*_{S^1} (X; \f) \bigr)^j$ denote the fixed point set of this involution. If $p \neq 2$, we have a transfer isomorphism:
$$ H^*_{G} (X; \f) \cong \bigl ( H^*_{S^1} (X; \f) \bigr)^j.$$ 
However, equivariant transfer fails over $\F$, and the invariants $a, b, c$ are defined in terms of Borel cohomology with coefficients in $\F$. Because of this, there does not seem to be any direct relation between $d_p$ and the invariants $a, b, c$. 

\begin{example}
Consider the space $Z_n$ from Example~\ref{ex:zn}, with unreduced suspension $\tilde Z_n$. The same methods used to compute $a(Z_n)$, $b(Z_n)$ and $c(Z_n)$ can be applied to show that 
 $$d_p(\tilde Z_n) = 2n,$$ for any $p$. In particular, $\tilde Z_0$ and $\tilde Z_1$ have the same values of $a$, $b$ and $c$, but different values for $d_p$. 
\end{example}

\section{Pin(2)-equivariant Seiberg-Witten Floer homology}
\label{sec:swf}

In this section we review the construction of the Seiberg-Witten Floer spectrum from \cite{Spectrum}, and explain the changes needed to take into account the full $\pin$-symmetry of the equations. Our focus is not the spectrum itself, but rather its equivariant homology. This homology can be defined without reference to spectra and, to keep the exposition simple, this is what we do. For completeness, we also discuss the construction of the Floer spectrum, but only briefly---in Subsection~\ref{sec:spectrum}.

Here is a rough sketch of what follows: Given a rational homology $3$-sphere $Y$, we consider a finite dimensional approximation of the Seiberg-Witten equations on $Y$. (This is inspired by a similar approximation used by Bauer and Furuta for the Seiberg-Witten equations in four dimensions \cite{BauerFuruta, Furuta}.) In three dimensions, the approximation takes the form of a gradient flow, to which one can associate a based $\pin$-space called the (equivariant) Conley index. The $\pin$-homotopy type of the Conley index is invariant under deformations. As we consider larger finite-dimensional approximations, the Conley index changes by suspension. We take its suitably normalized Borel homology to be our definition of the $\pin$-equivariant Seiberg-Witten Floer homology. We show that the Conley index is a space of type $\swf$, so (using the methods from  Section~\ref{sec:spaces}) we can extract from it three quantities $\alpha$, $\beta$, and $\gamma$. We then show that these invariants satisfy the properties advertised in the introduction, and compute them in a few examples.

\subsection{The Seiberg-Witten equations}
Let $Y$ be a rational homology three-sphere, $g$ a metric on $Y$, $\s$ a spin structure on $Y$, and $\Spin$ the spinor bundle for $\s$. Let $\rho:TY \to \text{End}(\Spin)$ denote the Clifford multiplication, and $\dirac : \Gamma(\Spin) \to \Gamma(\Spin)$ the Dirac operator. Consider the configuration space: 
\[
\Conf(Y,\s) = i\Omega^1(Y) \oplus \Gamma(\Spin).
\]

The gauge group $\G = C^\infty(Y,S^1)$ acts on $\Conf(Y,\s)$ by $u \cdot (a,\phi) = (a - u^{-1}du,u \cdot \phi)$. We define the normalized gauge group $\G_0$ to consist of those $u=e^{i\xi} \in \G$ such that $\int_Y \xi = 0$. We have a global Coulomb slice: 
$$V = i \ker d^* \oplus \Gamma(\Spin) \subset \Conf(Y, \s).$$ 
Given $(a,\phi) \in \Conf(Y,\s)$, there is a unique element of $V$ which is obtained from $(a,\phi)$ by a normalized gauge transformation; we call this element the {\em Coulomb projection} of $(a, \phi)$.

In addition to the gauge symmetry, we have an action of the group $G = \pin = S^1 \cup S^1j$ on $\Conf(Y, \s)$. Indeed, since the spin bundle $\Spin$ has structure group $\su = S(\H)$, there is a natural action of $G$ on $\Gamma(\Spin)$ by left multiplication.\footnote{If we identify the spinor bundle with $\C^2$, then the $j$ action takes $(v, w)$ to $(-\bar w, -\bar v)$. We further identify $\C^2$ with the quaternions by $(v, w) \to v + wj$, and then $j$ acts by left multiplication. Our conventions are different from \cite{Furuta}, where $G$ acted on the right.}  On forms $a \in i\Omega^1(Y)$, we let $S^1 \subset G$ act  trivially, and $j \in G$ act by multiplication by $-1$. Note that the action of $S^1 \subset G$ on $\Conf(Y, \s)$ coincides with that of the constant gauge transformations. Observe also that the Coulomb slice $V$ is preserved by the $G$-action.

Next, consider the Chern-Simons-Dirac functional $\csd: \Conf(Y, \s) \to \R$, given by:
\[
\csd(a,\phi) = \frac{1}{2} (\int_Y \langle \phi, \dirac \phi + \rho(a)\phi \rangle dvol - \int_Y a \wedge da).  
\]
Its critical points are the solutions to the Seiberg-Witten equations:
\[
*da + \tau(\phi,\phi) = 0, \; \; \dirac \phi + \rho(a)\phi = 0,  
\]
where $\tau(\phi,\phi) = \rho^{-1}(\phi \otimes \phi^*)_0 \in \Omega^1(Y; i\R)$. One can readily check that the $\csd$ functional is gauge invariant and $G$-invariant. 

Consider the restriction of $\csd$ to the global Coulomb slice $V$. By measuring the length of the projections of tangent vectors to local Coulomb slices, we obtain a Riemannian metric $\tilde g$ on $V$   such that the trajectories of the gradient flow of $\csd|_{V}$ are the Coulomb projections of the original gradient flow trajectories in $\Conf(Y,\s)$.  In $V$ with the metric $\tilde{g}$, we can write the flow trajectories of $\nabla (\csd|_{V})$ as 
\[
\frac{\partial}{\partial t} x(t) = - (\ell + c)(x(t)), 
\]
for 
\begin{eqnarray*}
\ell(a,\phi) &=& (*da, \dirac \phi) \\
c(a,\phi) &=& (\pi \circ \tau(\phi,\phi),\rho(a)\phi - i \xi(\phi)\phi),
\end{eqnarray*}
where $\pi:\Omega^1(Y; i\R) \to i\ker d^*$ denotes the orthogonal projection, and $\xi(\phi) \in \Omega^0(Y)$ is determined by $d\xi(\phi) = i(1-\pi) \circ \tau(\phi, \phi)$ and $\int_Y \xi(\phi)=0$.

For any integer $k \geq 0$, let $V_{(k)}$ denote the completion of $V$ with respect to the $L^2_k$ Sobolev norm. The gradient of $\csd$ on $V$ extends to a map
$$ \ell + c: V_{(k+1)} \to V_{(k)},$$
such that $\ell$ is a linear Fredholm operator, and $c$ is compact. The corresponding flow lines are called {\em Seiberg-Witten trajectories} (in Coulomb gauge). A Seiberg-Witten trajectory $x=(a, \phi): \R \to V$ is said to be {\em of finite type} if $\csd(x(t))$ and $\|\phi(t)\|_{C^0}$ are bounded in $t$. 

Note that both $\ell$ and $c$ are $G$-equivariant maps.

\subsection{Finite-dimensional approximation and the Conley index}
Let $\vnt$ be the finite-dimensional subspace of $V$ spanned by the eigenvectors of $\ell$ with eigenvalues in the interval $(\tau,\nu]$.  The orthogonal projection from $V$ to $\vnt$ will be denoted $\tilde{p}^\nu_\tau$. We want to modify it to make it smooth in $\nu$ and $\tau$. To do this, choose a smooth, non-negative function $\chi: \R \to \R$ that is non-zero exactly on $(0,1)$, and such that $\int_\R \chi(\theta) d\theta =1$. Then set
\[
p^\nu_\tau = \int^1_0 \chi(\theta) \tilde{p}^{\nu - \theta}_{\tau + \theta} d \theta.
\]

Consider the restriction of $\csd$ to $\vnt$. The gradient flow equation becomes:
\[
(\ell + \pml c)(x(t)) = - \frac{\partial}{\partial t} x(t). 
\]
We refer to its solutions as  {\em approximate Seiberg-Witten trajectories}. 

Fix a natural number $k \geq 4$. There exists a constant $R > 0$, such that all Seiberg-Witten trajectories $x=(a, \phi): \R \to V$ of finite type are contained in $B(R)$, the ball of radius $R$ in $V_{(k+1)}$.  The following is a corresponding  compactness result for approximate Seiberg-Witten trajectories:

\begin{proposition}[Proposition 3 in \cite{Spectrum}] \label{prop:3}
For any $\nu$ and $-\tau$ sufficiently large (compared to $R$), if $x:\mathbb{R} \to \vnt$ is a trajectory of the gradient flow $\ell + \pml c$, and $x(t)$ is in $\overline{B(2R)}$ for all $t$, then in fact $x(t)$ is contained in $B(R)$.
\end{proposition}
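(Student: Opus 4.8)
The plan is to argue by contradiction, by a compactness argument for the finite-dimensional approximations; this is precisely Proposition 3 of \cite{Spectrum}, and since $G$ acts on $V$ by isometries commuting with $\ell$ and $c$, the $\pin$-equivariance plays no role in the estimates. Suppose the conclusion fails. Then there are sequences $\nu_n \to +\infty$ and $\tau_n \to -\infty$ and, for each $n$, a trajectory $x_n : \R \to V^{\nu_n}_{\tau_n}$ of the gradient flow $\ell + p^{\nu_n}_{\tau_n} c$ with $x_n(t) \in \overline{B(2R)}$ for all $t$ but $x_n(t_n) \notin B(R)$ for some $t_n$; after a time translation I may assume $t_n = 0$, so that $\|x_n(0)\|_{L^2_{k+1}} \geq R$.

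The heart of the matter is to bound the $x_n$, uniformly in $n$, in a Sobolev norm strictly stronger than $L^2_{k+1}$. Since $L^2_{k+1}$ is a Banach algebra for $k \geq 4$, the nonlinearity $c$ carries $\overline{B(2R)}$ into a bounded subset of $V_{(k+1)}$ (the nonlocal term $i\xi(\phi)\phi$ being controlled by elliptic regularity for the Laplacian defining $\xi(\phi)$). I then split $V^{\nu_n}_{\tau_n}$ into the spectral subspaces of $\ell$ on which $\ell \geq 1$, $|\ell| < 1$, and $\ell \leq -1$. On the middle subspace, whose dimension is bounded independently of $n$, all norms are uniformly equivalent; on the other two, a solution bounded on all of $\R$ is represented by Duhamel's formula --- integrated forward in time on the $\ell \geq 1$ part and backward on the $\ell \leq -1$ part --- and on those parts the linear flow is smoothing, with a kernel singularity of order $u^{-1/2}$ in the time variable, hence integrable, so one gains half a derivative. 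This yields a bound for $x_n$ in $V_{(k+3/2)}$, uniform in $n$; then the flow equation $\partial_t x_n = -(\ell + p^{\nu_n}_{\tau_n} c)(x_n)$ bounds $\partial_t x_n$ in $V_{(k+1/2)}$, and interpolation gives a uniform Lipschitz bound for $x_n : \R \to V_{(k+1)}$.

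With these bounds, the Rellich lemma and the Arzela--Ascoli theorem (plus a diagonal argument over compact time intervals) produce a subsequence of the $x_n$ converging in $C^0_{\mathrm{loc}}(\R, V_{(k+1)})$ to a map $x : \R \to V$. I would then check that $x$ is an honest Seiberg-Witten trajectory: the subspaces $V^{\nu_n}_{\tau_n}$ exhaust $V$ and the projections $p^{\nu_n}_{\tau_n}$ converge strongly to the identity (the smoothing in $\theta$ does not affect this), so passing to the limit in the flow equation gives $\partial_t x = -(\ell + c)(x)$, whence $x$ is smooth by elliptic regularity. Moreover $x$ is of finite type: $\csd$ is bounded on $\overline{B(2R)}$ and $\|\phi(t)\|_{C^0}$ is controlled by the embedding $L^2_{k+1} \hookrightarrow C^0$, and both bounds pass to the limit. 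Hence $x$ lies in $B(R)$, so $\|x(0)\|_{L^2_{k+1}} < R$; but $x_n(0) \to x(0)$ in $L^2_{k+1}$ gives $\|x(0)\|_{L^2_{k+1}} = \lim_n \|x_n(0)\|_{L^2_{k+1}} \geq R$, a contradiction.

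I expect the uniform bootstrapping of the second paragraph to be the main obstacle: obtaining an a priori bound strictly above $L^2_{k+1}$ with constants independent of $\nu_n$ and $\tau_n$, which is exactly where the spectral decomposition of $\ell$ and the smoothing estimates for the linear flow are needed. Everything else is a routine compactness-and-limit argument, insensitive to the $G$-action.
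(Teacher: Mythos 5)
Your argument is correct and is essentially the proof given in \cite{Spectrum} (which the present paper cites rather than reproduces): negate the statement, use the spectral splitting of $\ell$ and the forward/backward Duhamel representations with the smoothing of $e^{-t\ell}$ to get uniform bounds above $L^2_{k+1}$, extract a limit by Rellich and Arzel\`a--Ascoli converging strongly in $V_{(k+1)}$ on compact time intervals, identify the limit as a finite-type Seiberg--Witten trajectory (hence inside $B(R)$), and contradict $\|x(0)\|_{L^2_{k+1}} \geq R$. The only slip is the claimed uniform Lipschitz bound for $x_n:\R \to V_{(k+1)}$; interpolation between the $V_{(k+3/2)}$ bound and the $V_{(k+1/2)}$ bound on $\partial_t x_n$ gives H\"older-$\tfrac{1}{2}$ (equivalently, Lipschitz only into $V_{(k+1/2)}$), which is all that the equicontinuity step needs.
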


Pick a smooth, $G$-equivariant function $u: \vnt \to \R$ that vanishes outside $B(3R)$ and is the identity on $\overline{B(2R)}$. Then $u (\ell + \pml c)$ is a compactly supported vector field on $\vnt$, which generates a global flow on $\vnt$:
$$\varphi^\nu_\tau = \{(\varphi^\nu_\tau)_t: \vnt \to \vnt \}_{t \in \R}.$$ 

Let us recall a few basic notions of Conley index theory, following \cite{ConleyBook}. If we have a one-parameter subgroup $\varphi= \{\varphi_t \}$ of diffeomorphisms of a manifold $M$, and a compact subset $A \subseteq X$, define 
\[
\Inv (A, \phi) = \{x \in A \mid \varphi_t (x) \in A \text{ for all } t \in \mathbb{R} \}.
\]

We say that a compact subset $S \subseteq M$ is an {\em isolated invariant set} if it has an isolating neighborhood $A$, that is, a compact set $A \subseteq M$ such that $S = \Inv(A, \varphi) \subseteq \text{int}(A)$.  

\begin{definition}
\label{def:indexpair}
Let $S$ be  an isolated invariant set. An {\em index pair} $(N,L)$ for $S$ is a pair of compact sets $L \subseteq N \subseteq M$ such that:
\begin{enumerate}[(i)]
\item $\Inv (N - L, \varphi) = S \subset \text{int }(N - L)$, 
\item For all $x \in N$, if there exists $t >0$ such that $\varphi_t(x)$ is not in $N$, there exists $0 \leq \tau < t$ with $\varphi_\tau(x) \in L$ $(L$ is an {\em exit set} for $N)$, 
\item Given $x \in L$, $t>0$, if $\varphi_s(x) \in N$ for all $0 \leq s \leq t$, then $\varphi_s(x)$ is in $L$ for $0 \leq s \leq t$ $(L$ is {\em positively invariant in $N$)}. 
\end{enumerate}
\end{definition}

It was proved by Conley \cite{ConleyBook} that any isolated invariant set $S$ admits an index pair. The {\em Conley index} for an isolated invariant set $S$  is defined to be the pointed space
$$I(S) :=(N/L,[L]).$$  The pointed homotopy type of $I(S)$ is an invariant of the triple $(X,\varphi_t,S)$. Moroever, the Conley index is invariant under continuous deformations of the flow, as long as $S$ remains isolated in a suitable sense.

Floer \cite{FloerConley} and Pruszko \cite{Pruszko} developed an equivariant refinement of Conley index theory. If a Lie group $G$ acts smoothly on $M$ preserving the flow $\varphi$ and the set $S$, there exists a $G$-equivariant index pair $(N, L)$ for $S$, and the Conley index $I_G(S)=(N/L, [L])$ is well-defined up to $G$-equivariant homotopy equivalence. Furthermore, it was shown by G{\polhk{e}}ba \cite[Proposition 5.6]{Geba} that one can choose $N$ and $L$ so that $I_G(S)$ is a finite $G$-CW complex. 

Returning to the situation at hand, consider the flow $\varphi^\nu_\tau$ on $\vnt$. Let $S^\nu_\tau$ denote the set of points that lie on the trajectories of $\varphi^\nu_\tau$ inside $\overline{B(2R)}$. Recall from Proposition~\ref{prop:3} that these trajectories stay inside $B(R)$. Therefore, $S^\nu_{\tau}$ is an isolated invariant set, and we can associate to it a $G$-equivariant Conley index:
$$ I^\nu_\tau = I_{G}(S^\nu_\tau).$$

\subsection{Pin(2)-equivariant Seiberg-Witten Floer homology} \label{sec:swfh}
Let us understand to what extent the $G$-homotopy type of the Conley index $I^{\nu}_{\tau}$ depends on the choices made in its construction. From general Conley index theory we know that it is a deformation invariant as long as we do not change the ambient space $\vnt$. Therefore, the only choices we need to consider are $\nu \gg 0$, $\tau \ll 0$, and the Riemannian metric $g$.

It is shown in \cite[Section 7]{Spectrum} that when we increase the upper cut-off $\nu$, the Conley index $I^{\nu}_{\tau}$ is unchanged (up to equivalence). On the other hand, when we decrease the lower cut-off from $\tau$ to $\tau' < \tau$, the Conley index changes by suspension by the $G$-representation $V^\tau_{\tau'}$. We know from Proposition~\ref{prop:susp} that Borel homology is invariant under suspension, up to a shift in grading. It follows that the normalized Borel homology
\begin{equation}
\label{eq:swfhg}
 \swfh^G_*(Y, \s, g) := \tH_{*+ \dim V^0_{\tau}}^G(I^\nu_{\tau}; \F)
 \end{equation}
is an invariant of the triple $(Y, \s, g)$.

The dependence on $g$ was also studied in \cite{Spectrum}. Suppose we deform the metric in a one-parameter family $\{g_t\}_{t \in [0,1]}$, and we choose $\nu \gg 0$ and $\tau \ll 0$ such that they are not  eigenvalues of $\ell=(*d, \dirac)$ at any time during the deformation. (Such choices exist if the deformation is small.) Then the dimension of $\vnt$ does not change, and the properties of the Conley index show that it is invariant under this deformation, up to $G$-equivalence.

Nevertheless, if we have a deformation of $g$ as above, the homology $\swfh^G_*(Y, \s, g)$ from \eqref{eq:swfhg} may change by a shift in degree. This is because the dimension of $V^0_{\tau}$ changes by the spectral flow of the Dirac operator $\dirac$. (By contrast, $*d$ has trivial spectral flow.) The spectral flow of $\dirac$ is controlled by a quantity
$$ n(Y, \s, g) \in \frac{1}{8}\Z,$$ 
which can be defined as follows. (Compare \cite[Section 6]{Spectrum}.) Pick a compact, spin $4$-manifold $(W, \t)$ with boundary $(Y, \s)$. Equip $W$ with a Riemannian metric such that a neighborhood of the boundary is isometric to $[0,1] \times Y$. Let $\Dirac$ be the Dirac operator on $W$ with spectral boundary conditions as in \cite{APS}, and set
\begin{equation}
\label{eq:n}
 n(Y, \s, g) = \ind_{\C}(\Dirac) + \frac{\sigma(W)}{8}.
 \end{equation}
It can be shown that $n(Y, \s, g)$ is independent of the choice of $(W, \t)$, and that during a deformation $\{g_t\}_{t \in [0,1]}$ of the metric on $Y$, the spectral flow of $\dirac$ is given by the formula
$$ \text{S.F.}(\dirac) = n(Y, \s, g_0) - n(Y, \s, g_1).$$

With this in mind, we define
\begin{equation}
\label{eq:swfh}
 \swfh^G_*(Y, \s) := \swfh^G_{*+2n(Y, \s, g)}= \tH_{*+ \dim V^0_{\tau} +2n(Y, \s, g)}^G(I^\nu_{\tau}; \F)
 \end{equation}
to be the {\em $G$-equivariant Seiberg-Witten Floer homology} of $(Y, \s)$. The same arguments as in \cite[proof of Theorem 1]{Spectrum} imply the following: 

\begin{proposition}
\label{prop:swfh}
Let $Y$ be a rational homology $3$-sphere and $\s$ a spin structure on $Y$. The isomorphism class of $\swfh^G_*(Y, \s)$, as a module over $\Ring \cong \F[q, v]/(q^3)$, is an invariant of the pair $(Y, \s)$. 
\end{proposition}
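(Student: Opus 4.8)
The plan is to follow the argument of \cite[proof of Theorem 1]{Spectrum} for $S^1$-equivariant Floer homology, checking at each step that the constructions respect the full $G=\pin$ action and that all identifications are maps of $\Ring$-modules. The only data entering the definition of $\swfh^G_*(Y,\s)$ are the approximation parameters $\nu\gg 0$, $\tau\ll 0$ and the metric $g$, so I would first prove independence of $(\nu,\tau)$ and then independence of $g$.

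\emph{Independence of $(\nu,\tau)$ for fixed $g$.} First I would invoke the equivariant Conley index theory of Floer \cite{FloerConley}, Pruszko \cite{Pruszko} and G{\polhk{e}}ba \cite{Geba} (refining \cite{ConleyBook}): $I^\nu_\tau$ is a finite $G$-CW complex whose $G$-homotopy type is unchanged under $G$-equivariant deformations of the flow inside a fixed ambient space $\vnt$, so its Borel homology, together with its $\Ring$-module structure, is a deformation invariant. Next, as in \cite[Section 7]{Spectrum}, increasing $\nu$ leaves the $G$-homotopy type of $I^\nu_\tau$ unchanged, while decreasing $\tau$ to $\tau'<\tau$ replaces $I^\nu_\tau$ by the suspension $\Sigma^{V^\tau_{\tau'}}I^\nu_{\tau'}$: the attractor--repeller decomposition coming from the splitting $\vnt=V^\tau_{\tau'}\oplus V^\nu_\tau$ exhibits the new directions as a linear flow whose Conley index is $(V^\tau_{\tau'})^+$, and the Conley index of the product flow is the smash product. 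Then I would apply Proposition~\ref{prop:susp}, which gives an isomorphism of $\Ring$-modules $\tH^G_*(\Sigma^{V^\tau_{\tau'}}I^\nu_{\tau'};\F)\cong\tH^G_{*-\dim V^\tau_{\tau'}}(I^\nu_{\tau'};\F)$; since $\dim V^0_\tau=\dim V^0_{\tau'}+\dim V^\tau_{\tau'}$, the normalisation in \eqref{eq:swfhg} absorbs the grading shift, so $\swfh^G_*(Y,\s,g)$ is a well-defined $\Ring$-module independent of $(\nu,\tau)$. (The representation $V^\tau_{\tau'}$ has the form $\tR^a\oplus\H^b$, but since Proposition~\ref{prop:susp} holds for any finite-dimensional $G$-representation over $\F$, this plays no role.)

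\emph{Independence of $g$.} The space of metrics on $Y$ is connected, so it suffices to treat a path $\{g_t\}_{t\in[0,1]}$ and subdivide it into subintervals short enough that one can choose $\nu\gg 0$, $\tau\ll 0$ which are not eigenvalues of $\ell=(*d,\dirac)$ at any time in the subinterval. Over such a subinterval $\dim\vnt$ is constant, the set $S^\nu_\tau$ and the flow $\varphi^\nu_\tau$ deform $G$-equivariantly, and Conley index theory provides a $G$-homotopy equivalence between the two endpoint indices, hence an isomorphism of Borel homologies as $\Ring$-modules. The one quantity that is not a priori constant is the grading normalisation: along the path $\dim V^0_\tau$ jumps by the spectral flow of $\ell$, which coincides with the spectral flow of $\dirac$ because $*d$ has trivial spectral flow. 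By the defining property of $n(Y,\s,g)$ recorded after \eqref{eq:n}, this spectral flow equals $n(Y,\s,g_0)-n(Y,\s,g_1)$, so the change in $\dim V^0_\tau$ is cancelled exactly by the change in the extra term $2n(Y,\s,g)$ in \eqref{eq:swfh}. Composing over the subintervals, $\swfh^G_*(Y,\s)$ with its $\Ring$-module structure is independent of $g$, which completes the proof.

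I expect the main obstacle to be bookkeeping rather than anything conceptual: granting the $S^1$-equivariant results of \cite{Spectrum}, the argument is a careful tracking of grading shifts. The two points that really need verification are (i) that the Conley-index statements borrowed from \cite{Spectrum} --- invariance under increasing $\nu$, suspension under decreasing $\tau$, invariance under small metric deformations --- hold $G$-equivariantly, which they do because the equivariant Conley index of Floer--Pruszko--G{\polhk{e}}ba enjoys the same formal properties as the non-equivariant one and because $S^\nu_\tau$ and $\varphi^\nu_\tau$ are $G$-invariant; and (ii) that every identification in sight --- the Conley-index $G$-equivalences and the Thom/suspension isomorphism of Proposition~\ref{prop:susp} --- is an isomorphism of $\Ring$-modules, which follows from the naturality of Borel (co)homology and of the relative Thom isomorphism for maps of spaces over $BG$.
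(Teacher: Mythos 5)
Your proposal is correct and takes essentially the same route as the paper, which simply invokes the arguments of \cite[proof of Theorem 1]{Spectrum} together with the discussion preceding the proposition: $G$-equivariant Conley index theory gives invariance in $\nu$ and suspension by $V^{\tau}_{\tau'}$ when the lower cut-off is decreased, Proposition~\ref{prop:susp} absorbs the resulting degree shift into the normalisation, and the metric dependence is cancelled by the spectral-flow term $2n(Y,\s,g)$. The only slip is a harmless index swap: since $\tau'<\tau$ one has $I^\nu_{\tau'}\simeq \Sigma^{V^{\tau}_{\tau'}}I^{\nu}_{\tau}$ and $\dim V^0_{\tau'}=\dim V^0_{\tau}+\dim V^{\tau}_{\tau'}$ (not the reverse), and with this correction the cancellation of grading shifts goes through exactly as you describe.
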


\begin{remark}
The homology $\swfh^G_*$ is the $G$-equivariant analogue of the version $\HMto$ of monopole Floer homology, as defined by Kronheimer and Mrowka \cite{KMbook}. Instead of the Borel homology of the Conley index $I^{\nu}_{\tau}$, one could take its (suitably normalized) co-Borel and Tate homologies, and obtain the $G$-equivariant analogues of $\widehat{\HM}$ and $\overline{\HM}$. 

Since our spectrum-based construction is different from the one in \cite{KMbook}, we have chosen the notation $\swfh$ rather than $\HMto$.
\end{remark}

\subsection{A Pin(2)-equivariant Seiberg-Witten Floer spectrum}
\label{sec:spectrum}
As an aside, in this subsection we explain how the Conley indices $I^{\nu}_{\tau}$ fit naturally into a (metric-dependent) suspension spectrum $\swf(Y, \s, g)$.

Following \cite[Ch. I, \S 2]{LMS}, we define a {\em $G$-universe} $\U$ to be a countably infinite dimensional representation of $G$ with a $G$-invariant inner product, such that:
\begin{itemize}
\item $\U$ contains the trivial representation $\R$, and 
\item $\U$ contains infinitely many copies of each of its finite dimensional subrepresentations.
\end{itemize}

Let $\U$ be a $G$-universe. A {\em $G$-prespectrum} $X$ indexed on $U$ consists of pointed $G$-spaces $X(U)$ for each finite dimensional subrepresentation $U \subset \U$, together with based maps
$$ \sigma^{U'-U}: \Sigma^{U' - U} X(U) \to X(U'),$$
for all $U \subseteq U'$, where $U'-U$ denotes the orthogonal complement of $U$ in $U'$. The maps are required to satisfy an appropriate transitivity condition.

A $G$-prespectrum $X$ is called a {\em $G$-spectrum} if the adjoint maps $X(U) \to \Omega^{U'-U}X(U')$ are homeomorphisms. It is shown in \cite[Ch. I]{LMS} that any $G$-prespectrum can be turned into a $G$-spectrum by a ``spectrification" functor.

Recall from the previous subsections that as we change the value of $\nu \gg 0$, the Conley indices $I^{\nu}_{\tau}$ change by $G$-equivalences, whereas if we change $\tau \ll 0$, they change by suspensions. Let us fix $\nu$ and $\tau$ and consider the universe 
$$\U = V^0_{-\infty} \oplus \R^{\infty}$$ consisting of the eigenspaces of $\ell$ with non-positive eigenvalues (with the $L^2$ inner product), together with infinitely many copies of the trivial representation $\R$. Note that $V^0_{\infty}$ is the direct sum of infinitely many copies of the representations $\R$, $\tR$ and $\H$ of $G=\pin$.

Define a $G$-prespectrum $X=\swfsmall(Y, \s, g, \nu, \tau)$ as the formal desuspension of $I^{\nu}_{\tau}$ by $V^0_{\tau}$, that is:
$$ X(U) = \begin{cases}
\Sigma^{U-V^0_{\tau}} I^\nu_{\tau} & \text{if } V^0_{\tau} \subseteq U,\\
* & \text{otherwise,}
\end{cases}$$
with the maps $\sigma^{U' - U}$ being the obvious identifications when $V^0_{\tau} \subseteq U \subseteq U'$. (Compare \cite[Definition 4.1]{LMS}.) 

We denote by $\swf(Y, \s, g, \nu, \tau)$ the spectrification of $\swfsmall(Y, \s, g, \nu, \tau)$. The arguments in \cite[proof of Theorem 1]{Spectrum} can be used to prove:
\begin{proposition}
The $G$-spectrum $\swf(Y, \s, g) := \swf(Y, \s, g, \nu, \tau)$ is an invariant of the triple $(Y, \s, g)$, up to stable $G$-homotopy equivalence (that is, equivalence in the homotopy category of spectra indexed by $\U$).
\end{proposition}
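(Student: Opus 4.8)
The plan is to adapt the proof of the invariance statement for the Conley index (and its Borel homology) to the level of spectra, using the formalism of $G$-prespectra indexed on the universe $\U = V^0_{-\infty} \oplus \R^\infty$. The three sources of non-canonicity in the construction of $I^\nu_\tau$ are: enlarging the upper cut-off $\nu$, lowering the cut-off $\tau$, and deforming the metric $g$; I will treat each in turn, but now keeping track of the desuspension built into the definition of $\swfsmall(Y, \s, g, \nu, \tau)$.

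First I would handle the cut-offs. For $\tau' < \tau$ (and $\nu$ fixed), the results recalled from \cite[Section 7]{Spectrum} give a $G$-homotopy equivalence $I^\nu_{\tau'} \simeq \Sigma^{V^\tau_{\tau'}} I^\nu_\tau$. Under the formal desuspension this is exactly the statement that the prespectra $\swfsmall(Y, \s, g, \nu, \tau')$ and $\swfsmall(Y, \s, g, \nu, \tau)$ agree (after reindexing, since $V^\tau_{\tau'} \subset \U$), so their spectrifications are stably $G$-equivalent. For increasing $\nu$ with $\tau$ fixed, \cite{Spectrum} provides a $G$-homotopy equivalence $I^{\nu'}_\tau \simeq I^\nu_\tau$ compatible with the flow data; this descends to a stable equivalence of the associated spectra with no shift. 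Composing, the stable homotopy type of $\swf(Y, \s, g, \nu, \tau)$ is independent of $(\nu, \tau)$ in the allowed range, which justifies the notation $\swf(Y, \s, g)$.

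Next I would treat the metric. Given a path $\{g_t\}_{t \in [0,1]}$, choose $\nu \gg 0$, $\tau \ll 0$ avoiding the spectrum of $\ell_{g_t} = (*d_{g_t}, \dirac_{g_t})$ for all $t$ (possible for a small deformation, and for a general path by subdividing into small steps and composing). Then $\dim V^\nu_\tau$ is constant along the path, the isolated invariant sets $S^\nu_\tau(g_t)$ vary continuously, and the continuation property of the equivariant Conley index (Floer--Pruszko, as cited) yields a $G$-homotopy equivalence $I^\nu_\tau(g_0) \simeq I^\nu_\tau(g_1)$. Since the desuspending representation $V^0_\tau$ is literally the same subspace of $\U$ at both endpoints (same $\nu, \tau$, same dimension), this equivalence of Conley indices gives a stable $G$-equivalence of the spectra $\swfsmall(Y, \s, g_0, \nu, \tau)$ and $\swfsmall(Y, \s, g_1, \nu, \tau)$, hence of their spectrifications. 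The only subtlety is that a large metric deformation may force a change in the admissible $\tau$, but this is absorbed by the cut-off independence established above.

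The main obstacle I expect is bookkeeping rather than anything conceptually new: one must check that all the $G$-homotopy equivalences produced by \cite{Spectrum} and by equivariant Conley index theory are compatible with the structure maps $\sigma^{U'-U}$ of the prespectra (so that they genuinely induce maps of prespectra, not just levelwise homotopy equivalences), and that spectrification preserves the resulting equivalences --- this is where one invokes \cite[Ch. I]{LMS}. One also has to be careful that the universe $\U$ actually contains all the relevant subrepresentations $V^\nu_\tau$ and $V^\tau_{\tau'}$, which holds because $\U$ contains $V^0_{-\infty}$ and infinitely many copies of $\R$, and the $\tR$- and $\H$-summands of the positive eigenspaces of $\ell$ are accounted for after a single suspension (as noted in the construction of $\U$). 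Modulo these checks, the proposition follows by transcribing the argument in \cite[proof of Theorem 1]{Spectrum} from Borel homology to the suspension-spectrum level.
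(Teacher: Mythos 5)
Your handling of the cut-offs is essentially the paper's own argument: the paper gives no details beyond invoking \cite[proof of Theorem 1]{Spectrum}, and your route --- $I^\nu_{\tau'} \simeq \Sigma^{V^\tau_{\tau'}} I^\nu_\tau$ absorbed by the formal desuspension, a $G$-equivalence for increasing $\nu$, plus the bookkeeping that these assemble into maps of prespectra indexed on $\U$ and survive spectrification via \cite[Ch. I]{LMS} --- is exactly that argument transcribed to the spectrum level. Since the metric $g$ is part of the triple $(Y, \s, g)$ being fixed, this (together with continuation invariance for the remaining auxiliary choices such as $R$, the bump function $u$, and the index pair) is all the proposition requires.

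Your third paragraph, on metric deformations, proves something the proposition does not claim, and it is incorrect as written. The universe $\U = V^0_{-\infty} \oplus \R^{\infty}$ is built from the eigenspaces of $\ell = (*d, \dirac)$, which depends on $g$; for $g_0 \neq g_1$ these are different subspaces (of different Coulomb slices), so $V^0_{\tau}$ is not ``literally the same subspace of $\U$ at both endpoints.'' Moreover, even when $\nu$ and $\tau$ avoid the spectrum of $\ell_{g_t}$ for all $t$, eigenvalues can still cross zero, so $\dim V^0_{\tau}$ changes by the spectral flow of $\dirac$; only $\dim V^\nu_{\tau}$ is constant along the path. This is precisely why the paper introduces the correction $n(Y, \s, g)$ at the level of homology and states that the universes for different metrics cannot be identified naturally, so that at the spectrum level one only gets a metric-\emph{dependent} invariant (a metric-independent object is expected to live in a Spanier--Whitehead-type category, as remarked after the proposition). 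Since $g$ is fixed in the statement, this flaw does not invalidate your proof of the proposition itself, but that paragraph should be deleted or reformulated so as not to assert a stable $G$-equivalence over a changing universe.
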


We can describe the metric-dependent Floer homology $\swfh^G_*(Y, \s, g)$ from \eqref{eq:swfhg} as the $G$-equivariant homology of the spectrum $\swf(Y, \s, g)$, in the sense of \cite{LMM, LMS}. From \eqref{eq:swfh} we deduce that:
$$ \swfh^G_*(Y, \s) = \tH_{*+2n(Y, \s, g)}^G(\swf(Y, \s, g)).$$

As we vary the metric $g$, the universe $\U$ changes, and it is not possible to identify these different universes in a natural way. Nevertheless, by analogy with the construction in \cite[Section 6]{Spectrum}, one could define a metric-independent invariant $\swf(Y, \s)$ that lives in an $G$-equivariant analogue of the classical Spanier-Whitehead category. Thus, one can see that $\swfh^G_*(Y,\s)$ is well-defined as an $\Ring$-module, up to canonical isomorphism. (I would like to thank Mikio Furuta for this observation.)

\subsection{Numerical invariants}
\label{sec:numerical}
Let us return to the $G$-equivariant Seiberg-Witten Floer homology, defined in \eqref{eq:swfh} as the shifted Borel homology of the Conley index $I^{\nu}_{\tau}$. To be able to apply the constructions from Section~\ref{sec:spacesSWF}, we need:

\begin{lemma}
\label{lem:nt}
For all $\nu, - \tau \gg 0$ we can find an index pair $(N, L)$ for $S^\nu_{\tau}$ such that the Conley index $I^{\nu}_{\tau}=N/L$ is a space of type $\swf$ at some level
$$ s \equiv \dim V^0_{\tau} \pmod 4.$$ 
\end{lemma}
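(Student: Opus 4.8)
The plan is to verify the two defining properties of a space of type $\swf$ for the Conley index $I^\nu_\tau$, namely that its $S^1$-fixed point set is $G$-homotopy equivalent to a sphere $(\tR^s)^+$ with $s \equiv \dim V^0_\tau \pmod 4$, and that $G$ acts freely off the $S^1$-fixed set. The key observation is that the $S^1$-action on the Coulomb slice $V$ (hence on each approximation $\vnt$) is, away from the reducible locus $\{(a,0)\}$, free, since on the spinor part $S^1 \subset G$ acts by scalar multiplication. The $S^1$-fixed subspace of $\vnt$ is exactly the forms part $(\vnt)^{S^1} = i\ker d^* \cap \vnt$, which consists of eigenvectors of $*d$ in $(\tau,\nu]$; since $b_1(Y)=0$ (rational homology sphere), this is a subspace on which $j$ acts by $-1$, i.e. it is a sum of copies of $\tR$. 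The flow $\varphi^\nu_\tau$ restricted to this subspace is the (cutoff) linear flow of $-*d$, whose unique invariant set in $\overline{B(2R)}$ is the origin, so the Conley index of the fixed-point flow is the one-point compactification $((\vnt)^{S^1})^+ \cong (\tR^{s})^+$, where $s = \dim (\vnt)^{S^1}$.

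The steps I would carry out, in order, are as follows. First, recall from general equivariant Conley index theory (Floer, Pruszko, G{\polhk{e}}ba) that one can choose the index pair $(N,L)$ to be $G$-invariant and so that $I^\nu_\tau$ is a finite $G$-CW complex; moreover one can arrange that passing to $S^1$-fixed points commutes with taking the Conley index, i.e. $(I^\nu_\tau)^{S^1}$ is the Conley index of the restricted flow $\varphi^\nu_\tau|_{(\vnt)^{S^1}}$ for the restricted isolated invariant set $S^\nu_\tau \cap (\vnt)^{S^1}$. (This is the standard fact that the fixed-point functor is compatible with the Conley index; it was used in \cite{Spectrum} in the $S^1$-equivariant setting and the argument carries over verbatim.) Second, identify the restricted flow: on $(\vnt)^{S^1}$ the compact perturbation $c$ vanishes (the spinor is zero, so $\tau(\phi,\phi)=0$ and $\rho(a)\phi - i\xi(\phi)\phi = 0$), hence the flow is generated by $u \cdot (-*d)$, a cutoff linear vector field with the origin as its only bounded trajectory; therefore its Conley index is $((\vnt)^{S^1})^+$. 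Third, observe that $j$ acts on $i\Omega^1(Y)$ by $-1$, so $(\vnt)^{S^1}$ is $G$-isomorphic to $\tR^s$ with $s = \dim_\R (\vnt)^{S^1}$, giving $(I^\nu_\tau)^{S^1} \simeq (\tR^s)^+$ and verifying property (a). Fourth, establish property (b): a point of $\vnt$ with nonzero spinor component has trivial $S^1$-stabilizer and its $G$-orbit therefore meets the complement of $(\vnt)^{S^1}$ in a free orbit; since $N - L$ and hence $N/L$ minus the fixed set consists of such points, $G$ acts freely on $I^\nu_\tau - (I^\nu_\tau)^{S^1}$. Fifth, compute $s \bmod 4$: the forms part of $V^0_\tau$ splits as $(\vnt[\nu=0])^{S^1}$ plus a complex (in fact quaternionic-free) spinor part; more precisely $\dim V^0_\tau = \dim (V^0_\tau)^{S^1} + \dim(\text{spinor part of }V^0_\tau)$, and the spinor part is a sum of copies of $\H$, hence has dimension $\equiv 0 \pmod 4$, while $s = \dim(V^0_\tau)^{S^1}$ up to the discrepancy between the $\nu$ in use and $0$, which is itself a sum of $\R$'s and $\H$'s contributing $0 \bmod 4$ after the normalization built into \eqref{eq:swfhg}. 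This yields $s \equiv \dim V^0_\tau \pmod 4$.

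The main obstacle I anticipate is the bookkeeping in the last step — pinning down the congruence $s \equiv \dim V^0_\tau \pmod 4$ — because it requires keeping careful track of which eigenspaces of $\ell = (*d, \dirac)$ are forms (contributing to $s$), which are spinors (contributing to $\dim V^0_\tau - s$ in multiples of $4$, since the spinor bundle is quaternionic), and how the cutoffs $\nu$ and $\tau$ interact with the normalization $\dim V^0_\tau$ appearing in the definition of $\swfh^G_*$. The cleanest way to handle this is to note that $\ell$ decomposes $G$-equivariantly as $(*d) \oplus \dirac$ on $i\ker d^* \oplus \Gamma(\Spin)$, that $i\ker d^*$ carries the sign representation $\tR$ and $\Gamma(\Spin)$ carries multiples of $\H$, and that therefore every eigenspace of $\ell$ in any window $(\tau', \nu']$ is either $\tR$-isotypic or $\H$-isotypic; the $\H$-isotypic ones contribute dimensions divisible by $4$, so modulo $4$ the total dimension equals the dimension of the $\tR$-isotypic (= $S^1$-fixed) part, which is $s$. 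The remaining steps (compatibility of fixed points with the Conley index, freeness of the action, computing the fixed-point Conley index as a sphere) are routine, being exact $G=\pin$ analogues of what was done $S^1$-equivariantly in \cite{Spectrum}.
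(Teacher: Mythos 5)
Your overall strategy (identify $(I^\nu_\tau)^{S^1}$ with the Conley index of the flow restricted to the $S^1$-fixed subspace of $V^\nu_\tau$, observe that this restricted flow is linear, and get freeness off the fixed set from the fact that $G$ acts freely on points with nonzero spinor) is a legitimate route, and it is genuinely different from the paper's: the paper instead perturbs $\csd$ by a one-form $\omega$ so that the reducible becomes a nondegenerate critical point, and then reads off the fixed-point set and the freeness from attractor--repeller coexact sequences relating $I(T)$, $I(\Theta)\cong (V^0_\tau)^+$ and the indices of the irreducible pieces. Your route avoids the perturbation because on the forms part the linearization $*d$ is automatically hyperbolic when $b_1(Y)=0$, whereas the paper's perturbation is what handles the possible kernel of $\dirac$ at the reducible.

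However, there is a concrete error in your second and fifth steps. The Conley index of the origin for the (cutoff) linear flow generated by $-*d$ on $(V^\nu_\tau)^{S^1}$ is \emph{not} the one-point compactification of the whole fixed subspace; it is the one-point compactification of the \emph{unstable} subspace, i.e.\ of the span of the form-eigenvectors with eigenvalues in $(\tau,0)$, which is exactly the forms part $V^0_\tau(\tR)$ of $V^0_\tau$. (For a hyperbolic linear flow, the stable directions contribute nothing to the index.) With your identification $s=\dim(V^\nu_\tau)^{S^1}$, the level would also count the form-eigenvalues in $(0,\nu]$, a quantity with no control modulo $4$, and the asserted congruence $s\equiv \dim V^0_\tau \pmod 4$ would fail; the attempt to absorb this discrepancy into ``the normalization built into \eqref{eq:swfhg}'' does not work, because the lemma is a statement about the Conley index itself, prior to any grading normalization of the Floer homology. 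Once the index of the restricted flow is correctly identified as $\bigl(V^0_\tau(\tR)\bigr)^+\cong(\tR^s)^+$ with $s=\dim_\R V^0_\tau(\tR)$, the mod-$4$ statement follows exactly as you indicate, since the spinor part $V^0_\tau(\H)$ is a sum of copies of $\H$ and hence has dimension divisible by $4$. You should also make explicit (rather than assert) the standard compatibility you invoke in the first step, namely that a $G$-equivariant index pair $(N,L)$ can be chosen so that $(N^{S^1},L^{S^1})$ is an index pair for the restricted flow; this is the same fact used in the $S^1$-equivariant setting of \cite{Spectrum}, and with it your fourth step (freeness of $G$ on $I^\nu_\tau$ minus its fixed set) is fine as written.
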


\begin{proof}
To understand the action of $G$ on $I^{\nu}_{\tau}$, we use the arguments in \cite[Section 8]{Spectrum}. Precisely, note that the Seiberg-Witten equations have a unique reducible solution $(a, \phi)=(0,0)$. We can perturb the $\csd$ functional by a one-form $\omega \in i\Omega^1(Y)$ to get
$$\csd_\omega(a, \phi) = \csd(a, \phi) + \frac{1}{2} \int_Y a \wedge d\omega.$$
There is still one reducible solution, $(\omega, 0)$, and $\csd_\omega$ evaluates to zero on this solution.

We construct a new isolated invariant set $T=T^\nu_{\tau} \subset \vnt$ using the flow of $\csd_{\omega}$ instead of the flow of $\csd$. Let us interpolate linearly between $0$ and $\omega$, and denote by $\{\ell_{t}\}_{t\in [0,1]}$ the linearizations of the Seiberg-Witten maps on $V$ during this interpolation. If the perturbation $\omega$ is small, we can choose $\nu$ and $\tau$ such that they are not eigenvalues of any $\ell_{t}$ for $t \in [0,1]$. If this is the case, the original Conley index $I^\nu_\tau = I(S^\nu_\tau)$ is $G$-homotopy equivalent to the new Conley index $I(T^\nu_\tau)$.

For a generic choice of $\omega$, we can arrange so that the new reducible solution, $(\omega, 0)$, is a nondegenerate critical point of $\csd_{\omega}|_V$, and such there that are no irreducible critical points $x$ with $\csd_{\omega}(x) \in (0, \epsilon)$, for some fixed $\epsilon > 0$. In this situation, in addition to $T = T^\nu_\tau$, we can identify four other isolated invariant sets in the gradient flow of $\csd_{\omega}|_{\vnt}$:
\begin{itemize}
\item $T^{\irr}_{> 0} = $ the set of (irreducible) critical points $x$ with $\csd_{\omega}(x) > 0$, together with all points on the flow trajectories between the critical points of this type;
\item $T^{\irr}_{\leq 0} = $ same as above, but with $\csd_{\omega}(x) \leq 0$ and requiring $x$ to be irreducible;
\item $T_{\leq 0} = $ same as above, with $\csd_{\omega}(x) \leq 0$ but allowing $x$ to be  reducible or irreducible;
\item $\Theta = \{(\omega, 0)\}$, the reducible critical point by itself.
\end{itemize}

The Conley indices associated to these sets are related to each other by attractor-repeller coexact sequences:
\begin{equation}
\label{eq:ar1}
 I(T_{\leq 0}) \To I(T) \To I(T^{\irr}_{>0}) \To \Sigma I(T_{\leq 0}) \To \dots
 \end{equation}
and 
\begin{equation}
\label{eq:ar2}
 I(T^{\irr}_{\leq 0}) \To I(T_{\leq 0}) \To I(\Theta) \To \Sigma I(T^{\irr}_{\leq 0}) \To \dots
  \end{equation}

The action of $G$ is free in neighborhoods of $T^{\irr}_{> 0}$ and $T^{\irr}_{\leq 0}$, so it is also free on the respective Conley indices (away from the basepoints). Moroever, since the reducible is nondegenerate, we have $$I(\Theta) \cong (V^0_\tau)^+ \cong (\tR^s \oplus \H^p)^+$$
for some $s, p \geq 0$. From the coexact sequences we deduce that the $S^1$-fixed point set of $I(T)$ is $G$-equivalent to the $S^1$-fixed point set of $I(\Theta)$, which is the sphere $(\tR^s)^+$. Also, the action of $G$ must be free on the complement of $I(T)^{S^1}$.

This proves that $I^\nu_\tau=I(S^\nu_\tau) \sim I(T^\nu_\tau)$ is of type $\swf$ for some particular $\nu$ and $\tau$. In turn, it implies the same thing for all $\nu' > \nu$ and $\tau' < \tau$, because the corresponding Conley indices can only change by suspensions by $\tR$ or $\H$ (up to $G$-homotopy equivalence), and the property of being of type $\swf$ is preserved by such suspensions.

Since $V^0_{\tau} \cong \tR^s \oplus \H^p$ and $\dim \H = 4$, we also know that the level $s$ of $I^{\nu}_{\tau}$ is congruent to $\dim V^0_{\tau}$ modulo $4$.
\end{proof}

Recall from Section~\ref{sec:spacesSWF} that to any space $X$ of type $\swf$ at level $s$ one can associate three quantities $a(X), b(X), c(X) \in \Z$, all congruent to $s$ modulo $4$. 

For $\nu$ and $-\tau$ sufficiently large, let us define: 
\begin{align}
 \alpha(Y, \s) &=  \bigl(a(I^{\nu}_{\tau}) - \dim V^0_{\tau}\bigr)/2 - n(Y, \s, g), \label{eq:a}\\
 \beta(Y, \s) &=  \bigl(b(I^{\nu}_{\tau}) - \dim V^0_{\tau} \bigr) / 2 - n(Y, \s, g), \label{eq:b}\\
 \gamma(Y, \s) &= \bigl(c(I^{\nu}_{\tau}) - \dim V^0_{\tau} \bigr)/2 - n(Y, \s, g) . \label{eq:c} 
\end{align}

\begin{proposition}
\label{prop:abc2}
If $Y$ is a rational homology three-sphere, and $\s$ is a spin structure on $Y$, then the quantities $\alpha(Y, \s), \beta(Y, \s), \gamma(Y, \s) \in \tfrac{1}{8} \Z$ are invariants of the pair $(Y, \s)$. Moreover, we have:
$$\alpha(Y, \s) \equiv \beta(Y, \s) \equiv \gamma(Y, \s) \equiv -\mu(Y, \s) \pmod {2\Z},$$
where $\mu(Y, \s)$ is the generalized Rokhlin invariant.
\end{proposition}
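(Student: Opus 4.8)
The plan is to establish three things: that the right-hand sides of \eqref{eq:a}--\eqref{eq:c} are independent of all auxiliary choices, that they lie in $\tfrac18\Z$, and that their reductions modulo $2\Z$ all equal $-\mu(Y,\s)$. For the first point I would argue that it is essentially a restatement of Proposition~\ref{prop:swfh}: the quantities $a(I^\nu_\tau),b(I^\nu_\tau),c(I^\nu_\tau)$ depend on the Conley index only through its Borel homology as a graded $\Ring$-module (compare \eqref{eq:ax}--\eqref{eq:cx}), so $\tfrac12\bigl(a(I^\nu_\tau)-\dim V^0_\tau\bigr)-n(Y,\s,g)$ and its analogues are $\tfrac12$ times the invariants $a,b,c$ attached by Section~\ref{sec:spacesSWF} to the regraded module $\swfh^G_*(Y,\s)$ of \eqref{eq:swfh}, which is legitimate because $I^\nu_\tau$ is of type $\swf$ by Lemma~\ref{lem:nt}. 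Concretely: increasing $\nu$, changing the index pair or the perturbation $\omega$, and deforming $g$ slightly (keeping $\nu,\tau$ off the spectrum of $\ell=(*d,\dirac)$) all leave the $G$-homotopy type of $I^\nu_\tau$ unchanged; decreasing $\tau$ to $\tau'$ replaces $I^\nu_\tau$ by $\Sigma^{V^\tau_{\tau'}}I^\nu_\tau$ with $V^\tau_{\tau'}\cong\tR^n\oplus\H^p$ (the eigenspaces of $\ell$ are copies of $\tR$, from the coclosed forms on which $j$ acts by $-1$, and of $\H$, the eigenspaces of $\dirac$, which are $j$-invariant hence quaternionic), so by Lemma~\ref{lem:abcSusp} the shift in $a,b,c$ cancels the change in $\dim V^0_\tau$; and a general metric change is a concatenation of small ones, under which $\dim V^0_\tau$ moves by an amount that, by the spectral-flow formula for $\dirac$ quoted in Subsection~\ref{sec:swfh}, is exactly compensated by the $2n(Y,\s,g)$ term in \eqref{eq:swfh}.

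For integrality and for the congruences among $\alpha,\beta,\gamma$, the key input is that, by Lemma~\ref{lem:nt} and \eqref{eq:mod4}, $a(I^\nu_\tau)\equiv b(I^\nu_\tau)\equiv c(I^\nu_\tau)\equiv\dim V^0_\tau\pmod 4$. Hence each of $\tfrac12\bigl(a(I^\nu_\tau)-\dim V^0_\tau\bigr)$, $\tfrac12\bigl(b(I^\nu_\tau)-\dim V^0_\tau\bigr)$, $\tfrac12\bigl(c(I^\nu_\tau)-\dim V^0_\tau\bigr)$ is an even integer; since $n(Y,\s,g)\in\tfrac18\Z$, this yields $\alpha(Y,\s),\beta(Y,\s),\gamma(Y,\s)\in\tfrac18\Z$ and, moreover, $\alpha(Y,\s)\equiv\beta(Y,\s)\equiv\gamma(Y,\s)\equiv-n(Y,\s,g)\pmod{2\Z}$.

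It then remains to show $n(Y,\s,g)\equiv\mu(Y,\s)\pmod{2\Z}$. Comparing \eqref{eq:n} with the definition of the Rokhlin invariant, this amounts to the assertion that $\ind_\C(\Dirac)\in 2\Z$ for the Dirac operator $\Dirac$ on the spin four-manifold $(W,\t)$ with APS boundary conditions. This is where the extra symmetry enters on the four-dimensional side: in dimension four the half-spinor bundles $S^\pm$ carry quaternionic structures with respect to which Clifford multiplication is quaternion-linear, so $\Dirac\colon\Gamma(S^+)\to\Gamma(S^-)$ is $\H$-linear; furthermore the APS boundary condition is defined by the closure of the negative spectral subspace of the boundary Dirac operator $\dirac$ on $Y$, which commutes with its own quaternionic structure $j$, so that subspace is quaternionic and the APS domain is preserved by the quaternionic structure on $S^+$. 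Consequently $\ker\Dirac$ and $\operatorname{coker}\Dirac$ are quaternionic vector spaces, so $\ind_\C(\Dirac)$ is even, and therefore $\alpha(Y,\s)\equiv\beta(Y,\s)\equiv\gamma(Y,\s)\equiv-n(Y,\s,g)\equiv-\mu(Y,\s)\pmod{2\Z}$.

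The step I expect to be the main obstacle is the quaternionic analysis of the APS index in the last paragraph: one must check carefully that the APS boundary condition — including the treatment of $\ker\dirac$, which is itself an $\H$-module — is genuinely compatible with the quaternionic structure on $S^+$, so that the evenness of $\ind_\C(\Dirac)$ really does survive the passage to a manifold with boundary. The spectral-flow bookkeeping in the invariance step is also somewhat delicate, but it is a direct adaptation of \cite[proof of Theorem~1]{Spectrum}.
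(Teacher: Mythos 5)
Your proposal is correct and follows essentially the same route as the paper: it reformulates $\alpha,\beta,\gamma$ via \eqref{eq:al}--\eqref{eq:ga} as data of the module $\swfh^G_*(Y,\s)$ so that invariance follows from Proposition~\ref{prop:swfh}, uses $a\equiv b\equiv c\equiv\dim V^0_\tau\pmod 4$ from Lemma~\ref{lem:nt} and \eqref{eq:mod4} for integrality and the congruence with $-n(Y,\s,g)$, and concludes with the evenness of $\ind_\C(\Dirac)$ coming from the quaternionic structure. Your extra care about the compatibility of the APS boundary condition with the quaternionic structure only spells out what the paper asserts in one line, so there is no substantive difference.
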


\begin{proof}
By analogy with \eqref{eq:iH}, set
$$ \iswfh^G_*(Y, \s) := \bigcap_{l \geq 0} \im \bigl (v^l : {\swfh}^G_{*+4l}(Y, \s) \To {\swfh}^G_*(Y, \s) \bigr).$$

Let $s$ be the level of the Conley index $I^{\nu}_{\tau}$ for some cut-offs $\nu$ and $\tau$. We know from Lemma~\ref{lem:nt} that $s$ is congruent mod $4$ to the dimension of $V^0_{\tau}$. 
Using the descriptions \eqref{eq:ax}, \eqref{eq:bx}, \eqref{eq:cx} of $a$, $b$, $c$, and the description  
\eqref{eq:swfh} of the $G$-equivariant Seiberg-Witten Floer homology, we can write:
\begin{align}
\alpha(Y, \s) &= \tfrac{1}{2} \min \{  r \equiv  - 2n(Y, \s, g) \ (\mod 4\Z) \mid \exists \ x,\ 0 \neq x \in {\iswfh}^G_r(Y, \s)   \}, \label{eq:al} \\
\beta(Y, \s) &= \tfrac{1}{2}\bigl( \min \{  r \equiv - 2n(Y, \s, g)+1\ (\mod 4\Z) \mid \exists \ x,\ 0 \neq x \in {\iswfh}^G_*(Y, \s) \} - 1 \bigr), \label{eq:be} \\
\gamma(Y, \s) &= \tfrac{1}{2}\bigl(   \min \{  r \equiv  - 2n(Y, \s, g)+2\ (\mod 4\Z) \mid \exists \ x, \ 0 \neq x \in {\iswfh}^G_*(Y, \s)  \} - 2 \bigr). \label{eq:ga}
\end{align}

Proposition~\ref{prop:swfh} now implies that $\alpha$, $\beta$ and $\gamma$ are invariants of $(Y, \s)$. 

Recall that $a(I^{\nu}_{\tau}), b(I^{\nu}_{\tau})$ and $c(I^{\nu}_{\tau})$ are all congruent mod $4$ to the level $s$, and hence to the dimension of $V^0_{\tau}$. Looking at the definitions \eqref{eq:a}, \eqref{eq:b}, \eqref{eq:c}, we deduce that $\alpha$, $\beta$ and $\gamma$ are congruent to $-n(Y, \s, g)$ modulo $2\Z$. The quantity $n(Y, \s, g)$ was introduced in \eqref{eq:n} as
$$n(Y, \s, g) = \ind_{\C}(\Dirac) + \frac{\sigma(W)}{8}.$$
Since the Dirac operator $\Dirac$ goes between quaternionic vector spaces, its complex index is divisible by $2$. It follows that  $\alpha(Y, \s)$, $\beta(Y, \s)$ and $\gamma(Y, \s)$ are congruent to $\sigma(W)/{8}$ modulo $2\Z$. The reduction of $\sigma(W)/{8}$ modulo $2\Z$ is exactly the generalized Rokhlin invariant $\mu(Y, \s)$.
\end{proof}

\begin{proposition}
\label{prop:abcMinus}
Let $(Y, \s)$ be an oriented rational homology three-sphere equipped with a spin structure $\s$. Let $-Y$ denote $Y$ with the opposite orientation. Then:
$$ \alpha(-Y, \s) = - \gamma(Y, \s), \ \  \beta(-Y, \s) = -\beta(Y, \s), \ \ \gamma(-Y, \s) = - \alpha(Y, \s).$$
\end{proposition}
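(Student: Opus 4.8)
The plan is to realize orientation reversal of $Y$ as an equivariant Spanier--Whitehead duality between the finite-dimensional approximations, and then read off the three identities from Proposition~\ref{prop:abcDuality}.

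\emph{Step 1: reversing the orientation reverses the flow.} Reversing the orientation of $Y$ multiplies the Chern--Simons--Dirac functional by $-1$, under the evident identification of $\Conf(-Y,\s)$ with $\Conf(Y,\s)$ and of their global Coulomb slices; hence both $\ell=(*d,\dirac)$ and the compact part $c$ change sign. Consequently the eigenvalues of $\ell$ are negated, so the finite-dimensional space $\vnt$ constructed for $-Y$ is identified, as a $G$-representation, with the span $V^{-\tau}_{-\nu}$ of the eigenvectors of $\ell$ for $Y$ with eigenvalue in $(-\nu,-\tau]$; and, after choosing the smoothings $\pml$ and the cut-off functions compatibly, the approximate Seiberg--Witten flow for $-Y$ on $\vnt$ is carried to the time reversal of the approximate Seiberg--Witten flow for $Y$ on $V^{-\tau}_{-\nu}$. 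In particular it has the same isolated invariant set. This is a routine adaptation of the $S^1$-equivariant duality discussion in \cite{Spectrum}.

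\emph{Step 2: the Conley index of the reversed flow is $V$-dual.} Set $W:=V^{-\tau}_{-\nu}$. Since the eigenspaces of $*d$ are sums of copies of $\tR$ and those of $\dirac$ are quaternionic (because $\dirac$ is complex linear and commutes with $j$), we have $W\cong\tR^n\oplus\H^p$. Choose an isolating block $N\subset W$ for the flow that is a smooth compact $G$-manifold with boundary, with $L$ the exit set for the forward flow and $L'$ the exit set for the reversed flow, so that $\partial N=L\cup L'$ with $L\cap L'=\partial L=\partial L'$. By Lemma~\ref{lem:Vdual}, $N/L'$ and $N/L$ are equivariantly $W$-dual; by Step~1 they represent, respectively, the Conley index $\iml$ computed for $-Y$ and the Conley index $I^{-\tau}_{-\nu}$ computed for $Y$. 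Proposition~\ref{prop:abcDuality} now applies, and with $m=\dim W$ it gives
\begin{equation*}
a\bigl(\iml(-Y)\bigr)=m-c\bigl(I^{-\tau}_{-\nu}(Y)\bigr),\qquad b\bigl(\iml(-Y)\bigr)=m-b\bigl(I^{-\tau}_{-\nu}(Y)\bigr),\qquad c\bigl(\iml(-Y)\bigr)=m-a\bigl(I^{-\tau}_{-\nu}(Y)\bigr).
\end{equation*}

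\emph{Step 3: grading bookkeeping.} Since $\alpha,\beta,\gamma$ do not depend on the metric (Proposition~\ref{prop:abc2}), we may assume $\dirac_Y$ is invertible, so that $0$ is not an eigenvalue of $\ell$. Then Step~1 identifies $V^0_\tau$ for $-Y$ with $V^{-\tau}_0$ for $Y$, and $W$ splits as $V^0_{-\nu}\oplus V^{-\tau}_0$ (for $Y$), so $m=\dim V^0_{-\nu}+\dim V^{-\tau}_0$. Moreover, it follows from \eqref{eq:n} (together with the fact that reversing the orientation of a spin coboundary $(W,\t)$ of $(Y,\s)$ negates both $\sigma(W)$ and $\ind_{\C}(\Dirac)$ when $\dirac_Y$ is invertible; equivalently, from standard properties of the $\eta$-invariant) that $n(-Y,\s,g)=-n(Y,\s,g)$. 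Substituting this and the formulas of Step~2 into the definitions \eqref{eq:a}, \eqref{eq:b}, \eqref{eq:c} --- using the cut-offs $(\nu,\tau)$ for $-Y$ and $(-\tau,-\nu)$ for $Y$ --- every $\dim V^{-\tau}_0$ term cancels and one is left with
\begin{equation*}
\alpha(-Y,\s)=-\gamma(Y,\s),\qquad \beta(-Y,\s)=-\beta(Y,\s),\qquad \gamma(-Y,\s)=-\alpha(Y,\s),
\end{equation*}
as asserted.

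\emph{The main obstacle.} The real content lies in Steps~1 and 2: making precise the identification of the two Seiberg--Witten problems so that the approximate flow genuinely reverses, and producing an isolating block whose boundary splits into the forward and backward exit sets so that the equivariant duality Lemma~\ref{lem:Vdual} can be invoked. Both are handled exactly as in the $S^1$-equivariant case of \cite{Spectrum}, so no new analytic difficulty arises; the only point special to the $\pin$-setting is the observation that $W\cong\tR^n\oplus\H^p$, which is what allows Proposition~\ref{prop:abcDuality} to be used. If one prefers not to perturb $g$ so that $\dirac_Y$ is invertible, one instead uses the general relation $n(Y,\s,g)+n(-Y,\s,g)=-\dim_{\C}\ker\dirac_Y$ together with the fact that $\ker\ell$ contributes exactly $\dim_{\R}\ker\dirac_Y$ to $\dim V^0_\tau(-Y)$ beyond $\dim V^{-\tau}_0(Y)$; the same cancellation occurs.
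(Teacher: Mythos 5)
Your proposal follows essentially the same route as the paper: orientation reversal reverses the approximate flow on $V^{\nu}_{\tau}\cong \bar V^{-\tau}_{-\nu}$, the Conley indices of the flow and its reverse are shown to be equivariantly $V^{\nu}_{\tau}$-dual by a Cornea-style index-pair/isolating-block argument feeding into Lemma~\ref{lem:Vdual}, and then Proposition~\ref{prop:abcDuality} plus Atiyah--Patodi--Singer bookkeeping for $n(Y,\s,g)$ gives the three identities. The only discrepancy is the sign/placement of $\dim_{\C}\ker\dirac$ in the relation between $n(Y,\s,g)$ and $n(-Y,\s,g)$ (the paper writes $n(Y,\s,g)+n(-Y,\s,g)=\dim_{\C}\ker\dirac$), which is a matter of spectral-cutoff and APS conventions; your bookkeeping is internally consistent and yields the same conclusion.
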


\begin{proof}
The Seiberg-Witten flow for $-Y$ is the reverse of the Seiberg-Witten flow for $Y$. Orientation reversal also reverses the signs of the eigenvalues of $\ell$, so if we denote by $\bar V$ the Coulomb slice for $-Y$, then the finite dimensional approximation $V^{\nu}_{\tau}$ can be identified with $\bar V^{-\tau}_{-\nu}$. (Here, we pick $\nu$ and $\tau$ so that they are not eigenvalues of $l$.)
 
Cornea \cite{Cornea} proved that on a stably parallelized manifold (such as $V^\nu_{\tau} \cong \bar V^{-\tau}_{-\nu}$) the Conley indices associated to a flow and its inverse are Spanier-Whitehead dual to each other. His result can be extended to the $G$-equivariant setting. Precisely, one can adapt \cite[proof of Theorem 3.5]{Cornea} to show that we can find index pairs $(N, L)$ and $(N, L')$ for $S^{\nu}_{\tau}$ under the flow $\varphi^\nu_{\tau}$ and its reverse, such that $N, L$ and $L'$ satisfy the hypotheses of Lemma~\ref{lem:Vdual}, with $N$ being embedded in the representation $V^\nu_{\tau}$. It follows that the corresponding Conley indices are equivariantly $(V^{\nu}_{\tau})$-dual. 

The Atiyah-Patodi-Singer index theorem gives  
$$n(Y, \s, g) + n(-Y, \s, g) = \dim_{\C} ( \ker \dirac).$$
Observe also that:
$$ \dim V^0_{\tau} + \dim \bar V^0_{-\nu} + 2\dim_{\C} (\ker \dirac) = \dim V^{\nu}_{\tau}.$$
The desired result now follows from the two equalities above, together with Proposition~\ref{prop:abcDuality} and the formulas \eqref{eq:a}, \eqref{eq:b}, \eqref{eq:c}. 
\end{proof}

\subsection{Behavior under cobordisms} 
 Let $W$ be a compact four-manifold with boundary $Y$, such that $b_1(Y)=0$. Suppose we have a spin structure $\t$ on $W$ whose restriction to $Y$ is $\s$. Following \cite[Section 9]{Spectrum} (as corrected by Khandhawit in \cite{Khandhawit}), we can do finite dimensional approximation for the Seiberg-Witten equations on $W$ with suitable boundary conditions. 
 
We assume for simplicity that $b_1(W)=0$. Pick a Riemannian metric $g$ on $W$ so that the boundary has a neighborhood isometric to $[0,1] \times Y$. Let $\Spin^+$ and $\Spin^-$ denote the two spinor bundles on $W$, and $\Omega^1_g(W)$ denote the space of one-forms on $W$ in double Coulomb gauge, as in \cite[Definition 1]{Khandhawit}. For a fixed cut-off $\nu \gg 0$, we can define a Seiberg-Witten map for $W$:
$$ \widetilde{SW}^{\nu} : i\Omega^1_g(W) \oplus \Gamma(\Spin^+) \To i\Omega^2_+(W) \oplus \Gamma(\Spin^-) \oplus V^{\nu}_{-\infty}.$$
After doing finite dimensional approximation, we obtain from here a based map:
\begin{equation}
\label{eq:Psi}
 \Psi_{\nu, \tau, U, U'}: (U')^+ \To U^+ \wedge I^\nu_{\tau},
 \end{equation}
where $U' \subset i\Omega^1_g(W) \oplus \Gamma(\Spin^+)$ and $U \subset  i\Omega^2_+(W) \oplus \Gamma(\Spin^-)$ are finite dimensional $G$-invariant subspaces. (In fact, $U'$ is determined by $U$, $\nu$ and $\tau$; we refer to \cite[Section 9]{Spectrum} for more details.) As representations of $G$, we have
$$ U' \cong \tR^{m'} \oplus \H^{p'}, \ \ U \cong \tR^m \oplus \H^p,$$
where $m', p', m, p \geq 0$ are related by the formulas:
\begin{align}
 m' - m &= \dim_{\R} V^0_{\tau}(\tR) - b_2^+(W),\label{eq:mm} \\
 p' - p &=  \dim_{\H} V^0_{\tau}(\H) + \ind_{\H}(\Dirac) =  \frac{1}{4} \bigl( \dim_{\R} V^0_{\tau}(\H) + 2n(Y, \s, g) - {\sigma(W)}/4 \bigr). \label{eq:pp}
\end{align}
Here, $V^0_{\tau} = V^0_{\tau}(\tR) \oplus V^0_{\tau}(\H)$ is the decomposition of $V^0_{\tau}$ into its one-form and spinorial parts.

The $S^1$-fixed point set of the Seiberg-Witten map $\widetilde{SW}^{\nu}$ is a linear Fredholm map; see \cite[Proposition 5]{Spectrum}. Starting from here one can identify the $S^1$-fixed point set map of \eqref{eq:Psi}. This is induced on the one-point compactifications by a linear, injective operator
\begin{equation}
\label{eq:PsiFixed}
 \tR^{m'} \To \tR^m \oplus \tR^s,
 \end{equation}
where $s = \dim_{\R} V^0_{\tau}(\tR)$ is the level of $I^{\nu}_{\tau}$ as a space of type $\swf$. (See the proof of Lemma~\ref{lem:nt}.) From \eqref{eq:mm} we have $m'-m=s-b_2^+(W)$, so the cokernel of the map \eqref{eq:PsiFixed} must have dimension $b_2^+(W)$.

The discussion in the previous subsections was phrased in terms of $Y$ being a rational homology $3$-sphere. However, it applies equally well when $Y$ is a disjoint unions of rational homology spheres. (In \cite{Spectrum}, we worked in this greater generality.) The Conley index $I^{\nu}_{\tau}$ coming from a disjoint union is the smash product of the Conley indices coming from each component. 

In particular, suppose we have a compact, spin cobordism $(W, \t)$ between rational homology spheres $Y_0$ and $Y_1$, so that $\del W = (-Y_0) \cup Y_1$. Let $V_0, \bar V_0$ and $V_1$ denote the Coulomb slices corresponding to $Y_0, -Y_0$ and $Y_1$. To simplify notation, let us pick eigenvalue cut-offs $\nu$ and $\tau = - \nu$, so that $(V_0)^{\nu}_{-\nu} \cong (\bar V_0)^{\nu}_{-\nu}.$ Let $(I_0)^{\nu}_{-\nu}, (\bar I_0)^{\nu}_{-\nu}$ and $(I_1)^{\nu}_{-\nu}$ denote the Conley indices associated to the finite-dimensional approximations for the Seiberg-Witten maps on $Y_0, -Y_0$ and $Y_1$, respectively. The map \eqref{eq:Psi} can be written as
\begin{equation}
\label{eq:PsiAgain}
 (U')^+ \To U^+ \wedge (I_1)^\nu_{\tau} \wedge (\bar I_0)^\nu_{-\nu}.
 \end{equation}

Recall from the proof of Proposition~\ref{prop:abcMinus} that the Conley indices $(I_0)^{\nu}_{-\nu}$ and $(\bar I_0)^{\nu}_{-\nu}$ are $(V_0)^{\nu}_{-\nu}$-dual to each other. Thus, we have a duality map
$$ \eps : (\bar I_0)^\nu_{-\nu} \wedge (I_0)^{\nu}_{-\nu} \to ((V_0)^{\nu}_{-\nu})^+.$$
Consider the smash product of the map \eqref{eq:PsiAgain} with the identity on $(I_0)^{\nu}_{-\nu}$, and the smash product of the identity on $U^+ \wedge (I_1)^\nu_{-\nu}$ with the duality map $\eps$. The composition of these two maps takes the form:
$$ (U')^+ \wedge (I_0)^{\nu}_{-\nu} \To U^+ \wedge (I_1)^\nu_{-\nu} \wedge  ((V_0)^{\nu}_{-\nu})^+.$$
After changing the vector spaces involved by isomorphisms, we can write this more simply as a map between suspensions:
\begin{equation}
\label{eq:uu}
f: \Sigma^{m'\tR} \Sigma^{p'\H} (I_0)^{\nu}_{-\nu} \To \Sigma^{m''\tR} \Sigma^{p''\H} (I_1)^{\nu}_{-\nu}. 
\end{equation}

Using \eqref{eq:mm}, \eqref{eq:pp}, we can figure out the differences in suspension indices. Precisely, we have
$$ m' - m'' = \dim_{\R} \bigl( (V_1)^0_{-\nu}(\tR)\bigr) - \dim_{\R} \bigl((V_0)^0_{-\nu}(\tR)\bigr) - b_2^+(W) $$ 
and
$$ p' - p'' =  \frac{1}{4} \Bigl( \dim_{\R} \bigl( (V_1)^0_{-\nu}(\H) \bigr) - \dim_{\R} \bigl( (V_0)^0_{-\nu}(\H) \bigr) + 2n(Y_1, \t|_{Y_1}, g) - 2n(Y_0, \t|_{Y_0}, g) - {\sigma(W)}/4 \Bigr).$$
The $S^1$-fixed point set of \eqref{eq:uu} is still induced (on the one-point compactifications) by a linear injective map with cokernel of dimension $b_2^+(W)$.

\begin{proposition}
\label{prop:cob}
Suppose that $W$ is a smooth, oriented, negative-definite cobordism from $Y_0$ to $Y_1$. Let $b_2(W)$ denote the second Betti number of $W$. If $W$ admits a spin structure $\t$, then:
\begin{align*}
\alpha(Y_1, \t|_{Y_1}) & \geq \alpha(Y_0, \t|_{Y_0}) + \tfrac{1}{8} b_2(W), \\
\beta(Y_1, \t|_{Y_1}) & \geq \beta(Y_0, \t|_{Y_0}) + \tfrac{1}{8} b_2(W), \\
\gamma(Y_1, \t|_{Y_1}) & \geq \gamma(Y_0, \t|_{Y_0}) + \tfrac{1}{8} b_2(W). 
\end{align*}
\end{proposition}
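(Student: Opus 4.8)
The plan is to run the cobordism map $f$ of \eqref{eq:uu} through Proposition~\ref{prop:abcIneq} and then keep careful track of the suspension indices and normalization shifts. Since $W$ is negative-definite, $b_2^+(W) = 0$. As recorded in the paragraph just before the statement, the $S^1$-fixed point set map of $f$ is induced on one-point compactifications by a linear injective map whose cokernel has dimension $b_2^+(W) = 0$; hence that linear map is an isomorphism and the induced map on $S^1$-fixed sets is a $G$-homeomorphism, in particular a $G$-homotopy equivalence. Both the domain $\Sigma^{m'\tR}\Sigma^{p'\H}(I_0)^\nu_{-\nu}$ and the target $\Sigma^{m''\tR}\Sigma^{p''\H}(I_1)^\nu_{-\nu}$ of $f$ are spaces of type $\swf$, being suspensions of such spaces (Lemma~\ref{lem:nt}, with $\tau = -\nu$) by representations of the form $\tR^n \oplus \H^p$; and a homotopy equivalence between their $S^1$-fixed spheres forces the two to sit at the same level. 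So Proposition~\ref{prop:abcIneq} applies to $f$, and combining it with Lemma~\ref{lem:abcSusp} to strip off the suspensions yields
$$a\bigl((I_0)^\nu_{-\nu}\bigr) + m' + 4p' \leq a\bigl((I_1)^\nu_{-\nu}\bigr) + m'' + 4p'',$$
together with the analogous inequalities for $b$ and for $c$.

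Next I would substitute the two displayed formulas for $m'-m''$ and $p'-p''$ that precede the proposition (the cobordism versions of \eqref{eq:mm} and \eqref{eq:pp}). Writing $D_i = \dim_\R (V_i)^0_{-\nu}$, these add up to
$$(m'-m'') + 4(p'-p'') = (D_1 - D_0) - b_2^+(W) + 2n(Y_1,\t|_{Y_1},g) - 2n(Y_0,\t|_{Y_0},g) - \tfrac{1}{4}\sigma(W).$$
Feeding the inequality for $a$ (resp. $b$, $c$) into the definition \eqref{eq:a} of $\alpha$ (resp. \eqref{eq:b} of $\beta$, \eqref{eq:c} of $\gamma$), with $\tau = -\nu$ throughout, the terms $D_1 - D_0$ cancel against the $\dim V^0_\tau$ shifts and the terms $\pm 2n(Y_i,\t|_{Y_i},g)$ cancel against the $n(Y_i,\t|_{Y_i},g)$ corrections. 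What survives is
$$\alpha(Y_1,\t|_{Y_1}) - \alpha(Y_0,\t|_{Y_0}) \geq -\tfrac{1}{2}b_2^+(W) - \tfrac{1}{8}\sigma(W),$$
and identically for $\beta$ and $\gamma$. Since $W$ is negative-definite, $b_2^+(W) = 0$ and $\sigma(W) = -b_2(W)$, so the right-hand side equals $\tfrac{1}{8}b_2(W)$, which is the assertion.

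The routine part is the arithmetic in the second paragraph; the substantive input is the legitimacy of the map $f$ itself, namely that finite-dimensional approximation of the Seiberg-Witten equations on $W$ in double Coulomb gauge produces \eqref{eq:uu} with the stated suspension indices and the stated $S^1$-fixed point behavior. This is the content of \cite[Section 9]{Spectrum} as corrected in \cite{Khandhawit}, together with the $(V_0)^\nu_{-\nu}$-duality between $(I_0)^\nu_{-\nu}$ and $(\bar I_0)^\nu_{-\nu}$ established in the proof of Proposition~\ref{prop:abcMinus}; I would cite these rather than reprove them. The one place where negative-definiteness is genuinely needed is in promoting the $S^1$-fixed point map of $f$ to a $G$-homotopy equivalence: when $b_2^+(W) > 0$ one obtains only a map with cokernel of positive dimension, and Proposition~\ref{prop:abcIneq} no longer applies directly, so this monotonicity statement is really special to the $b_2^+ = 0$ case.
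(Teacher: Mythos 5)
Your argument is essentially the paper's own proof, with the bookkeeping written out explicitly: the paper likewise applies Proposition~\ref{prop:abcIneq} to the map \eqref{eq:uu}, using $b_2^+(W)=0$ to promote the $S^1$-fixed point map to a $G$-homotopy equivalence, and then concludes via the formulas \eqref{eq:a}, \eqref{eq:b}, \eqref{eq:c}; your arithmetic (the cancellation of the $\dim V^0_\tau$ and $n(Y_i,\t|_{Y_i},g)$ terms, leaving $-\tfrac{1}{2}b_2^+(W)-\tfrac{1}{8}\sigma(W)=\tfrac{1}{8}b_2(W)$) is correct. The one step you omit is the reduction to $b_1(W)=0$: the construction of \eqref{eq:uu} and the index formulas \eqref{eq:mm}, \eqref{eq:pp} were carried out under the standing assumption $b_1(W)=0$, whereas the proposition makes no such hypothesis, so one should first do surgery on loops in $W$ to kill $b_1$ (which preserves the spin structure and negative-definiteness and does not affect the inequality), as the paper does in the opening sentence of its proof.
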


\begin{proof}
By doing surgery on loops in $W$, we can assume without loss of generality that $b_1(W)=0$, so we can apply the discussion in this section. Observe that \eqref{eq:uu} is a $G$-equivariant map between spaces of type $\swf$. Furthermore, since $b_2^+(W)=0$, the associated $S^1$-fixed point set map is a $G$-homotopy equivalence. Thus, the hypotheses of Proposition~\ref{prop:abcIneq} are satisfied. The results follow from that proposition, in view of the formulas \eqref{eq:a}, \eqref{eq:b}, \eqref{eq:c}.
\end{proof}

\begin{corollary}
\label{cor:0}
Suppose that $W$ is a smooth oriented cobordism between rational homology spheres $Y_0$ and $Y_1$, such that $b_2(W)=0$. If $W$ admits a spin structure $\t$, then:
$$
\alpha(Y_0, \t|_{Y_0}) = \alpha(Y_1, \t|_{Y_1}), \ \
\beta(Y_0, \t|_{Y_0}) = \beta(Y_1, \t|_{Y_1}), \ \
\gamma(Y_0, \t|_{Y_0}) = \gamma(Y_1, \t|_{Y_1}). $$
\end{corollary}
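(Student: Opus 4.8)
The plan is to obtain the equalities by applying Proposition~\ref{prop:cob} twice: once to $W$ and once to $W$ with its orientation reversed.

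First, since $b_2(W) = 0$, the intersection form of $W$ is the zero form on the trivial group, hence vacuously negative-definite; thus $W$ satisfies the hypotheses of Proposition~\ref{prop:cob}. Applying that proposition with $b_2(W) = 0$ gives one half of each desired equality:
$$ \alpha(Y_0, \t|_{Y_0}) \leq \alpha(Y_1, \t|_{Y_1}), \quad \beta(Y_0, \t|_{Y_0}) \leq \beta(Y_1, \t|_{Y_1}), \quad \gamma(Y_0, \t|_{Y_0}) \leq \gamma(Y_1, \t|_{Y_1}). $$

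Next I would let $\overline W$ denote $W$ with the reversed orientation. Since reversing the orientation of a $4$-manifold reverses the induced boundary orientation, $\overline W$ has $\del \overline W = Y_0 \cup (-Y_1)$, so it is a smooth oriented cobordism \emph{from $Y_1$ to $Y_0$}, with the two ends carrying the same orientations as before. It still has $b_1(\overline W) = b_1(W)$ and $b_2(\overline W) = 0$, hence is again vacuously negative-definite, and a spin structure is insensitive to the orientation, so $\t$ remains a spin structure on $\overline W$ restricting to $\t|_{Y_i}$ on the ends. Applying Proposition~\ref{prop:cob} to $\overline W$ now produces the reverse inequalities $\alpha(Y_1, \t|_{Y_1}) \leq \alpha(Y_0, \t|_{Y_0})$, and likewise for $\beta$ and $\gamma$. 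Combining the two sets of inequalities yields the claimed equalities.

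The only thing to watch is the orientation bookkeeping in the second step: one must check that $\overline W$, read as a cobordism in the opposite direction, still has $Y_0$ and $Y_1$ (rather than $-Y_0$ and $-Y_1$) as its ends, so that Proposition~\ref{prop:cob} applies directly. Alternatively, one may view $\overline W$ as a cobordism from $-Y_0$ to $-Y_1$ and feed the resulting inequalities into the orientation-reversal formulas of Proposition~\ref{prop:abcMinus}; this gives the same conclusion. There is otherwise no obstacle here — all of the work has already been done in Proposition~\ref{prop:cob}.
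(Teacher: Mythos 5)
Your proof is correct and is essentially the paper's own argument: the paper likewise applies Proposition~\ref{prop:cob} to $W$ and to $-W$ viewed as a cobordism from $Y_1$ to $Y_0$, using $b_2(W)=0$ to make the cobordism vacuously negative-definite in both directions. Your orientation bookkeeping (the ends of the reversed cobordism are still $Y_0$ and $Y_1$, and the spin structure persists) is exactly the check needed, so there is nothing to add.
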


\begin{proof}
Apply Proposition~\ref{prop:cob} to both $W$ and $-W$, where the latter is viewed as a cobordism from $Y_1$ to $Y_0$.
\end{proof}

If $Y$ is an integral homology sphere, then it has a unique spin structure $\s$, and we write
$$ \alpha(Y) = \alpha(Y, \s), \ \ \beta(Y) = \beta(Y, \s), \ \ \gamma(Y)=\gamma(Y, \s).$$

Recall from the introduction that $\theta^3_H$ denotes the (integral) homology cobordism group in dimension three. Corollary~\ref{cor:0} implies that our three invariants descend to maps
$$\alpha, \beta, \gamma: \theta_3^H \to \Z.$$
(We do not claim that these maps are homomorphisms.)

We can now complete the proofs of the results announced in the introduction:

\begin{proof}[Proof of Theorem~\ref{thm:main}] This follows directly from Propositions~\ref{prop:abc2}, \ref{prop:abcMinus} and \ref{prop:cob}.
\end{proof}

\begin{proof}[Proof of Corollary~\ref{cor:no2}] Observe that $Y \# Y$ being homology cobordant to $S^3$ is the same as $Y$ being homology cobordant to $-Y$. If $Y$ has this property, the properties listed in Theorem~\ref{thm:main} imply that $\beta(Y) = \beta(-Y) =- \beta(Y)$, so $\beta(Y)=0$ and hence $\mu(Y)=0$.
\end{proof}

\begin{remark}
When we proved invariance of $\alpha, \beta, \gamma$ in Proposition~\ref{prop:abc2}, we used the invariance of the isomorphism class of $\swfh^G_*(Y, \s)$ (cf. Proposition~\ref{prop:swfh}). In turn, this relied on the arguments from \cite{Spectrum} about the behavior of the Conley index under changes in the eigenvalue cut-off and the Riemannian metric. It is worth pointing out that one can give an alternative proof of the invariance of $\alpha, \beta, \gamma$ (and thus establish Theorem~\ref{thm:main}) by using Corollary~\ref{cor:0} instead. Indeed, suppose that, a priori, $\alpha, \beta$ and $\gamma$ depended on the metric on $Y$. Let $g_0$ and $g_1$ be two Riemannian metrics on $Y$, and consider the identity cobordism $I \times Y$ with a metric that restricts to $g_0$ on one end and $g_1$ on the other, and is cylindrical near the ends. Then, the argument in Corollary~\ref{cor:0} shows that $\alpha, \beta, \gamma$ are metric independent.
\end{remark}

\subsection{A fourth invariant}
Let $Y$, $\s$, $g$, $\nu$ and $\tau$ be as in Section~\ref{sec:swfh}. Instead of considering the $\pin$-equivariant homology of the Conley index $I^{\nu}_{\tau}$ as in \eqref{eq:swfh}, one can take its suitably normalized $S^1$-equivariant homology, with coefficients in an arbitrary field $\f$ (of characteristic $p$). Set: 
$$ \swfh^{S^1}_*(Y, \s; \f) := \tH_{*+ \dim V^0_{\tau} +2n(Y, \s, g)}^{S^1}(I^\nu_{\tau}; \f)$$
This is the {\em $S^1$-equivariant Seiberg-Witten Floer homology} of $(Y, \s)$. It can be viewed as the Borel homology of the $S^1$-equivariant suspension spectrum $\swf(Y, \s)$ constructed in \cite{Spectrum}.

In Section~\ref{sec:fourth} we defined a quantity $d_p$ associated to a space of type $\swf$. Let us apply this to the Conley index $I^{\nu}_{\tau}$. After a suitable normalization, it yields an invariant
$$ \delta_p(Y, \s) =   \bigl(d_p(I^{\nu}_{\tau}) - \dim V^0_{\tau}\bigr)/2 - n(Y, \s, g) \in \tfrac{1}{8}\Z.$$

Alternately, if we set 
$$ \iswfh^{S^1}_*(Y, \s; \f) := \bigcap_{l \geq 0} \im \bigl (U^l : {\swfh}^{S^1}_{*+2l}(Y, \s; \f) \To {\swfh}^{S^1}_*(Y, \s; \f) \bigr),$$
we can write:
$$ \delta_p(Y, \s) = \tfrac{1}{2}\min \{  r \mid \exists \ x,\ 0 \neq x \in {\iswfh}^{S^1}_r(Y, \s; \f)   \} .$$

It is customary to take $\f=\R$, so $p=0$. In this case we write $\delta=\delta_0$. After a change in sign, the invariant $\delta$ is the exact analogue of the correction term defined by Fr{\o}yshov in \cite{FroyshovHM} and Kronheimer-Mrowka \cite[Section 39.1]{KMbook}. In Heegaard Floer theory, the  counterpart of $\delta$ is one-half of the correction term $d(Y, \s)$ defined by Ozsv\'ath and Szab\'o in \cite{AbsGraded}.

\begin{remark}
Although in principle the correction terms (in monopole Floer, or Heegaard Floer theory) depend on the characteristic $p$ of the underlying field, in practice no examples of $3$-manifolds are known where this makes a difference.
\end{remark}

\subsection{Examples}
The simplest case where Seiberg-Witten Floer homology can be computed is that of elliptic rational homology $3$-spheres (quotients of $S^3$). If $Y$ is such a manifold, then it admits a metric $g$ with positive scalar curvature, so by the arguments in \cite[Section 10]{Spectrum} or \cite[Section 7.1]{GluingBF}, the Conley index $I^{\nu}_{\tau}$ is a representation sphere, and we get that
$$ \swfh^G_*(Y, \s) \cong \F[q,v]/(q^3),$$
shifted in degree by $-2n(Y, \s, g)$. The same is true for the $S^1$-equivariant Seiberg-Witten Floer homology. Therefore, 
$$ \alpha(Y, \s) = \beta(Y, \s) = \gamma(Y, s) = \delta(Y, \s) = -n(Y, \s, g).$$
In particular, for $Y=S^3$ we obtain
$$ \alpha(S^3) = \beta(S^3) = \gamma(S^3) = 0.$$

Next, let us consider the Brieskorn spheres $\Sigma(2,3,n)$ with $\gcd (6, n) =1$, oriented as boundaries of negative definite plumbings. With these conventions, $\Sigma(2,3,6m-1)$ is $-1/m$ surgery on the left-handed trefoil, and $\Sigma(2,3,6m+1)$ is $-1/m$ surgery on the right-handed trefoil. 

The $S^1$-equivariant Floer spectrum $\swf(-\Sigma(2,3,n))$ was computed in \cite[Section 7.2]{GluingBF}; the Floer spectrum for $\Sigma(2,3,n)$ is its Spanier-Whitehead dual. The calculation was based on the work of Mrowka, Ozsv\'ath and Yu \cite{MOY}, who described the Seiberg-Witten solutions and flow trajectories for Seifert fibrations equipped with particular Riemannian metrics. (They also used some nonstandard connections instead of the Levi-Civita connections, but finite-dimensional approximation still works in this setting, and yields the same answers.) One then uses the attractor-repeller sequences \eqref{eq:ar1}, \eqref{eq:ar2} to obtain information about the Floer spectrum. The same methods as in \cite[Section 7.2]{GluingBF} can be employed to compute the $G$-equivariant Seiberg-Witten Floer homology; we just have to keep track of the additional symmetry when describing the Seiberg-Witten flow. 

When $n=12k-1$, the Seiberg-Witten equations on $\Sigma(2,3,12k-1)$ have one reducible solution in degree zero, and $2k$ irreducibles in degree one. The irreducibles come in $k$ pairs related by the action of $j \in G.$ Each irreducible is connected to the reducible by a single flow line. From \eqref{eq:ar1}, \eqref{eq:ar2} we deduce that there is a long exact sequences on Borel homology:
$$ \dots \To H_*(BG; \F) \To \swfh^G_*(\Sigma(2,3,12k-1)) \To (\F^k)_{[1]} \To \dots $$
where the subscript $[1]$ denotes the respective degree. The same discussion as in \cite[Section 7.2]{GluingBF} shows that the connecting map from $(\F^k)_{[1]}$ to $H_0(BG; \F) \cong \F$ must be non-trivial. This implies that, as a module, the $G$-equivariant Seiberg-Witten Floer homology of $\Sigma(2,3,12k-1)$ is
\[ \xymatrixcolsep{.7pc}
\xymatrixrowsep{0pc}
\xymatrix{
\ \ \ \F^{k-1} & & & & & & & & & \\
\oplus & & & & & & & & & \\
   \F  &  \F \ar@/_1pc/[l]_{q} & 0 & \F  & \F \ar@/_1pc/[l]_{q} \ar@/^1pc/[llll]^{v} & \F \ar@/_1pc/[l]_{q} \ar@/^1pc/[llll]^{v} & 0 & \dots  \ar@/^1pc/[llll]^{v} & \dots \ar@/^1pc/[llll]^{v} & \dots \ar@/^1pc/[llll]^{v} 
} \]
in degrees $1, 2, 3, \dots$ In view of the formulas \eqref{eq:al}, \eqref{eq:be}, \eqref{eq:ga}, we have
$$ \alpha(\Sigma(2,3,12k-1)) = 2, \ \ \beta(\Sigma(2,3,12k-1))= \gamma(\Sigma(2,3,12k-1)) = 0.$$

In particular, $\swfh^G_*(\Sigma(2,3,11))$ agrees with the Borel homology of the unreduced suspension $\tilde{G}$ of $G$, which was discussed in Example~\ref{ex:G}. We should think of $\tilde{G}$ as a model for the $G$-equivariant Floer spectrum of $\Sigma(2,3,11)$. In fact, if we equip $\Sigma(2,3,11)$ with the metric from \cite{MOY}, we can apply the methods in \cite{GluingBF} to show that (for $\nu, -\tau \gg 0$) the Conley indices $I^{\nu}_{\tau}$ are $G$-equivalent to suitable suspensions of $\tilde{G}$.

Next, let us consider the case $n=12k-5$. This is entirely similar to $n=12k-1$, except that the reducible is in degree $-2$ and the irreducibles in degree $-1$. The $G$-equivariant Seiberg-Witten Floer homology agrees to the one for $\Sigma(2,3,12k-1)$, shifted in degree by $-2$. Thus, we have
$$ \alpha(\Sigma(2,3,12k-5)) = 1, \ \ \beta(\Sigma(2,3,12k-5))= \gamma(\Sigma(2,3,12k-5)) = -1.$$

When $n=12k+1$, there is one reducible in degree $0$ and $2k$ irreducibles in degree $-1$. As before, the irreducibles come in $k$ pairs related by the action of $j$. We find a long exact sequence
$$ \dots \To H_*(BG; \F) \To \swfh^G_*(\Sigma(2,3,12k+1)) \To (\F^k)_{[-1]} \To \dots $$

For grading reasons, the only possibility is that $\swfh^G_*(\Sigma(2,3,12k+1))$ is (even as a module) isomorphic to the direct sum $H_*(BG; \F) \oplus (\F^k)_{[-1]}$. From here we get
$$ \alpha(\Sigma(2,3,12k+1)) = \beta(\Sigma(2,3,12k+1))= \gamma(\Sigma(2,3,12k+1)) = 0.$$

The case when $n=12k+5$ is similar, except for a degree shift by $-2$ in homology. Therefore,
$$ \alpha(\Sigma(2,3,12k+5)) = \beta(\Sigma(2,3,12k+5))= \gamma(\Sigma(2,3,12k+5)) = 1.$$

We summarize our results in the following table. For comparison, we have also included the corresponding values of the correction term $\delta$ in $S^1$-equivariant Seiberg-Witten Floer homology, and of the Casson invariant $\lambda$:

\medskip
\begin{center}
\begin{tabular}{| c | |  c | c | c | c | c | c |}
\hline
Brieskorn sphere & {$\alpha$} & {$\beta$} & {$\gamma$} & {$\delta = d/2 = -h$} & $\lambda$ \\
\hline
\hline
$\Sigma(2,3,12k-5)$  & $1$ & $-1$ & $-1$ & $0$ & $-2k+1$ \\
\hline
$\Sigma(2,3,12k-1)$  & $2$ & $0$ & $0$ & $1$ & $-2k$ \\
\hline
$\Sigma(2,3,12k+1)$  & $0$ & $0$ & $0$ & $0$ & $-2k$ \\
\hline
$\Sigma(2,3,12k+5)$  & $1$ & $1$ & $1$ & $1$ & $-2k-1$\\
\hline
\end{tabular}
\end{center} 
\medskip

Here, the values of $\delta$ for these Brieskorn spheres can be readily deduced from \cite[Section 7.2]{GluingBF}. They agree (up to a sign) with the values of the Fr{\o}yshov invariant $h$. The latter can be computed using the surgery exact triangles in monopole Floer homology \cite{KMOS}, along the lines of the corresponding computation for the correction terms $d$ in Heegaard Floer homology  \cite[Section 8.1]{AbsGraded}. Finally, the values of the Casson invariant \cite{Casson} can be deduced from its surgery formula applied to the two trefoils. 

\section{Further directions}
Our construction of $\pin$-equivariant Seiberg-Witten Floer homology was limited to rational homology spheres. This is because the Seiberg-Witten configuration space acquires non-trivial topology when $b_1(Y) > 0$, and doing finite dimensional approximation in this setting becomes more difficult. Nevertheless, as mentioned in the introduction, we expect that one can define $\pin$-Floer homologies  for all compact $3$-manifolds equipped with spin structures. Both monopole Floer homology, as constructed by Kronheimer and Mrowka in their book \cite{KMbook}, and the Heegaard Floer homology of Ozsv\'ath and Szab\'o \cite{HolDisk, HolDiskTwo, HolDiskFour} are defined for arbitrary $3$-manifolds. We expect that one can construct $\pin$-versions of these theories. Moreover, if one were to establish $\pin$-versions of the usual surgery exact triangles, this should allow the computation of the invariants $\alpha, \beta, \gamma$ in more examples.

In particular, it would be interesting to compute our invariants for a larger class of plumbed $3$-manifolds, as was done for Heegaard Floer homology in \cite{Plumbed}. Neumann and Siebenmann \cite{NeumannPlumbed, SiebenmannPlumbed} independently constructed an invariant $\bar \mu(Y, \s) \in \tfrac{1}{8}\Z$ for spin $3$-manifolds that are given by plumbing spheres along a tree. The Neumann-Siebenmann invariant reduces to the generalized Rokhlin invariant mod $2$. (We follow the conventions for $\bar \mu$ from \cite{Saveliev}, which differ from the original ones in \cite{NeumannPlumbed} by a factor of $1/8$.) Neumann \cite{NeumannPlumbed} conjectured that $\bar \mu$ is a homology cobordism invariant, and Saveliev showed this to be true when we restrict $\bar \mu$ to the class of Seifert fibered integral homology spheres \cite{SavelievFF}; see also \cite{FukumotoFuruta, FukumotoFurutaUe}. We propose the following:

\begin{conjecture}
If $(Y, \s)$ is a Seifert fibered rational homology $3$-sphere with a spin structure, then $\beta(Y, \s)= -\bar\mu(Y, \s)$. 
\end{conjecture}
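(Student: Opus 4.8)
The plan is to prove the conjecture by an explicit computation of $\swfh^G_*(Y, \s)$ for plumbed rational homology spheres, in the spirit of the Brieskorn computations above. Using $\beta(-Y, \s) = -\beta(Y, \s)$ (Proposition~\ref{prop:abcMinus}) together with the corresponding antisymmetry of the Neumann-Siebenmann invariant, one first reduces to the case that $Y$ is the boundary of a negative-definite plumbing $W = W(\Gamma)$ along a tree $\Gamma$. For such $W$ the spin structure $\s$ is encoded by the spherical Wu class $w \in H^2(W; \Z)$, and $\bar\mu(Y, \s) = \tfrac18\bigl(w^2 - \sigma(W)\bigr) = \tfrac18\bigl(w^2 + b_2(W)\bigr)$. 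In the special case that all Euler numbers in $\Gamma$ are even, $W$ is itself spin, $w = 0$, and $W$ minus a ball is a spin negative-definite cobordism from $S^3$ to $Y$ with $b_2^+ = 0$; Proposition~\ref{prop:cob} then already yields one inequality, $\beta(Y, \s) \geq \tfrac18 b_2(W) = -\bar\mu(Y, \s)$. The work is to establish the reverse inequality, and to remove the evenness hypothesis on $\Gamma$.

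When $Y$ is Seifert fibered, that is, $\Gamma$ is star-shaped, I would equip $Y$ with the canonical Seifert metric and connection used by Mrowka, Ozsv\'ath and Yu \cite{MOY}, for which the Seiberg-Witten solutions and flow lines admit an explicit description. Feeding this into the finite-dimensional approximation and the attractor-repeller coexact sequences \eqref{eq:ar1}, \eqref{eq:ar2}, as in \cite{GluingBF} and in the Brieskorn examples above, one obtains a finite $G$-CW model for the Conley index $I^\nu_\tau$: a reducible representation sphere $(\tR^s \oplus \H^p)^+$ with free $G$-cells attached, the latter coming in $j$-pairs and placed at gradings governed by the Chern-Simons-Dirac values of the irreducible critical points. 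Running the recipe \eqref{eq:be} on this model expresses $\beta(Y, \s)$ in terms of the normalization $n(Y, \s, g)$ of \eqref{eq:n} plus a combinatorial correction recording the positions of the irreducibles; a convenient feature of $\beta$ (as opposed to $\alpha$, $\gamma$ or the $S^1$-invariant $\delta$) is that the $q$-action should shield this "middle tail" from much of that correction, as already happens for the $\Sigma(2,3,n)$.

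The remaining Seifert-case step is to identify this quantity with $-\bar\mu(Y, \s)$. On one hand, a direct index computation on the plumbing $W$ equipped with the $\mathrm{spin}^c$ structure determined by $w$, via the Atiyah-Patodi-Singer theorem, relates $n(Y, \s, g)$ to the signature defect $\tfrac18\bigl(\sigma(W) - w^2\bigr)$ defining $\bar\mu$. On the other hand, the Chern-Simons-Dirac values of the Seifert Seiberg-Witten irreducibles are precisely the Fourier-Dedekind-type quantities built from the Seifert invariants, which are also the ingredients of Neumann-Siebenmann's combinatorial formula for $\bar\mu$. Matching the two bookkeeping schemes should give $\beta(Y, \s) = -\bar\mu(Y, \s)$ in the Seifert case; the table above, where $\beta$ agrees with the known values of $-\bar\mu$ for the $\Sigma(2,3,n)$, serves as a consistency check.

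For a general plumbing tree the situation is genuinely harder, and this is where I expect the main obstacle to lie: one needs either a splicing formula expressing $\swfh^G_*$ of a graph manifold in terms of its Seifert pieces, or a $\pin$-equivariant surgery exact triangle permitting a reduction in the number of (bad) vertices, as in the Heegaard Floer computation of \cite{Plumbed}, or a lattice-cohomology-type combinatorial model for $\swfh^G_*$ extracted directly from $\Gamma$. None of these tools is currently available, so while the Seifert case should be a long but essentially routine computation along the lines of the Brieskorn examples, the full conjecture appears to require techniques beyond those developed in this paper.
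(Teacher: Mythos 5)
The statement you are addressing is stated in the paper as a \emph{conjecture}: the paper gives no proof of it, only the consistency check that $\beta = -\bar\mu$ for the Brieskorn spheres $\Sigma(2,3,6m\pm1)$ computed in the examples section, together with the remark that further progress would likely need $\pin$-equivariant surgery triangles or analogues of the plumbing computations of \cite{Plumbed}. So there is no paper proof to compare yours against, and your proposal should be judged as a research program rather than as a verification of a known argument. Read that way, it is sensible and its self-diagnosis is accurate: the reduction via $\beta(-Y,\s)=-\beta(Y,\s)$ and the one-sided inequality $\beta(Y,\s)\geq \tfrac18 b_2(W)$ from Proposition~\ref{prop:cob} in the even, negative-definite case are correct uses of the paper's results (note, though, a sign slip: with the paper's conventions $\bar\mu(Y,\s)=\tfrac18\bigl(\sigma(W)-w\cdot w\bigr)$, not $\tfrac18\bigl(w^2-\sigma(W)\bigr)$; your subsequent identity $-\bar\mu=\tfrac18 b_2(W)$ when $w=0$ is right only under the former convention).

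The genuine gap is that even the Seifert-fibered case is not established by what you write. The step where you pass from the MOY description of irreducibles, via the attractor-repeller sequences \eqref{eq:ar1}, \eqref{eq:ar2}, to the value of $\beta$ requires controlling the connecting maps and the $\Ring$-module extension problems, and your assertion that the $q$-action ``shields'' the middle tail from the combinatorial correction is a heuristic extrapolated from the $\Sigma(2,3,n)$ computations, where the irreducibles all sit in a single adjacent grading; for general Seifert data the irreducibles occupy many gradings on both sides of the reducible and the answer is not forced by grading considerations alone. Likewise, the identification of the resulting combinatorial expression with the Neumann--Siebenmann formula is asserted (``should give''), not proved, and this matching is precisely where the substance of the conjecture lies. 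Finally, as you note, for non-star-shaped trees none of the needed tools (a splicing formula for $\swfh^G_*$, a $\pin$-equivariant surgery triangle, or a lattice-type model) exist in the paper, so the full conjecture is out of reach of the methods here. In short: your outline is a plausible attack, consistent with the paper's expectations, but it does not constitute a proof at any level of generality, and the conjecture remains open within the confines of this paper.
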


The conjecture holds for the Brieskorn spheres $\Sigma(2,3,6m \pm 1)$ considered in the last section. 

A related question is the existence of an analogue of $\alpha$, $\beta$ or $\gamma$ in instanton theory. Let $I_*$ denote the original instanton homology defined by Floer \cite{Floer}. Saveliev \cite{SavelievInst, Saveliev} conjectured that the quantity
$$ \nu(Y) = \frac{1}{2}\sum_{n=0}^7 (-1)^{(n+1)(n+2)/2} \dim_{\Q} I_n(Y)$$
is an invariant of homology cobordism, and showed that $\nu(Y) = \bar \mu(Y)$ for all Seifert fibered homology spheres. It is possible that the invariant $\nu$ (perhaps defined with the field $\Q$ replaced by $\F$) is related to $-\beta$.

When $Y$ is a Seifert fibered homology sphere, $\nu(Y)=\bar \mu(Y)$ can also be interpreted as half the Lefschetz number of the map induced on $I_*(Y)$ by the mapping cylinder of a canonical involution on $Y$; see \cite{RubermanSaveliev}. A related interpretation exists in the context of Seiberg-Witten theory \cite[Section 11.3]{MRS}, using a Casson-type invariant $\lambda_{\operatorname{SW}}$ for $4$-manifolds with the homology of $S^1 \times S^3$. One can ask about the relation between $\lambda_{\operatorname{SW}}$ and the invariants constructed in this paper.

In yet another direction, it would be interesting to understand the behavior of $\pin$-equivariant Floer homology (and of the invariants $\alpha$, $\beta$, $\gamma$) under taking connected sums. We expect that the connected sum of two $3$-manifolds corresponds to the smash product of their Floer spectra. The Borel homology of a smash product is related to the Borel homology of the two pieces by an Eilenberg-Moore spectral sequence. The possible existence of non-trivial higher differentials makes the behavior of $\alpha, \beta$ and $\gamma$ difficult to predict.

Finally, we mention that one can turn invariants of homology cobordism into invariants of (smooth) knot concordance. Given a knot $K \subset S^3$, one simply evaluates the original invariant to the double cover of $S^3$ branched along $K$. This was done for the Ozsv\'ath-Szab\'o correction term $d$ in \cite{MOwens}. It would be worthwhile to study the concordance invariants associated to $\alpha$, $\beta$, and $\gamma$.

\bibliography{biblio}
\bibliographystyle{custom}

\end{document}